\newtheorem{theorem}{Theorem}
\newtheorem{prop}[theorem]{Proposition}
\newtheorem{remark}{Remark}
\newtheorem{claim}{Claim}
\newenvironment{proof-sketch}{\noindent{\bf Sketch of Proof}\hspace*{1em}}{\qed\bigskip}
\newcommand{\RR}{\mathbb R}
\newcommand{\NN}{\mathbb N}
\newcommand{\ZZ}{\mathbb Z}
\renewcommand{\leq}{\leqslant}
\renewcommand{\geq}{\geqslant} 
\begin{document}
%\hfill\today\bigskip

\title[Robin problems]{Robin problems with a general potential and a superlinear reaction}

%%%%%%%%%%%%%%%%%%%%%%%%%%%%%%%%%%%%%%%%%%%%%%%%%%%%%%%%%%%%%%%%%%%%%%%
\author[N.S. Papageorgiou]{N.S. Papageorgiou}
\address[N.S. Papageorgiou]{National Technical University, Department of Mathematics,
				Zografou Campus, 15780 Athens, Greece 
				\& Institute of Mathematics, Physics and Mechanics, 1000 Ljubljana, Slovenia}
\email{\tt npapg@math.ntua.gr}
\author[V.D. R\u{a}dulescu]{V.D. R\u{a}dulescu}
\address[V.D. R\u{a}dulescu]{Institute of Mathematics, Physics and Mechanics, 1000 Ljubljana, Slovenia \&  Faculty of Applied Mathematics, AGH University of Science and Technology, 30-059 Krak\'ow, Poland}
\email{\tt vicentiu.radulescu@imfm.si}
\author[D.D. Repov\v{s}]{D.D. Repov\v{s}}
\address[D.D. Repov\v{s}]{Faculty of Education and Faculty of Mathematics and Physics,
University of Ljubljana, \& Institute of Mathematics, Physics and Mechanics, 1000 Ljubljana, Slovenia}\email{dusan.repovs@guest.arnes.si}

\keywords{Indefinite potential, Robin boundary condition, superlinear reaction term, regularity theory, critical groups, multiple solutions, nodal solutions.\\
\phantom{aa} 2010 AMS Subject Classification: 35J20, 35J60, Secondary 58E05}

\begin{abstract}
We consider semilinear Robin problems driven by the negative Laplacian plus an indefinite potential and with a superlinear reaction term which need not satisfy the Ambrosetti-Rabinowitz condition. We prove existence and multiplicity theorems (producing also an infinity of smooth solutions) using variational tools, truncation and perturbation techniques and Morse theory (critical groups).
\end{abstract}

\maketitle

%\tableofcontents
%%%%%%%%%%%%%%%%%%%%%%%%%%%%%%%%%%%%%%

\section{Introduction}

In this paper we study the following semilinear elliptic problem with Robin boundary condition:
\begin{equation}\label{eq1}
	\left\{\begin{array}{ll}
		-\Delta u(z)+\xi(z)u(z)=f(z,u(z))\ \mbox{in}\ \Omega,\\
		\frac{\partial u}{\partial n}+\beta(z)u=0\ \mbox{on}\ \partial\Omega.
	\end{array}\right\}
\end{equation}

In this problem $\Omega\subseteq\RR^N$ is a bounded domain with a $C^2$-boundary $\partial\Omega$. The potential function $\xi\in L^s(\Omega)$ with $s>N$ is in general sign-changing. So, the linear part of (\ref{eq1}) is indefinite. The reaction term $f(z,x)$ is a Carath\'eodory function (that is, for all $x\in\RR,\ z\mapsto f(z,x)$ is measurable and for almost all $z\in\Omega,\ x\mapsto f(z,x)$ is continuous), which exhibits superlinear growth near $\pm\infty$. However, $f(z,\cdot)$ does not satisfy the (usual in such cases) Ambrosetti-Rabinowitz condition (AR-condition, for short). Instead, we employ a more general condition which incorporates in our framework superlinear functions with ``slower" growth near $\pm\infty$, which fail to satisfy the AR-condition. Another nonstandard feature of our work is that $f(z,\cdot)$ does not have subcritical polynomial growth. In our case, the growth of $f(z.\cdot)$ is almost critical in the sense that $\lim\limits_{x\rightarrow\pm\infty}\frac{f(z,x)}{|x|^{2^*-2}x}=0$ uniformly for almost all $z\in\Omega$, with $2^*$ being the Sobolev critical exponent for 2, defined by
\begin{eqnarray*}
	2^*=\left\{\begin{array}{ll}
		\frac{2N}{N-2} 	& \mbox{if}\ N\geq 3\\
		+\infty			& \mbox{if}\ N=1,2.
	\end{array}\right.
\end{eqnarray*}

In the boundary condition, $\frac{\partial u}{\partial n}$ denotes the normal derivative of $u\in H^1(\Omega)$ defined by extension of the continuous linear map
$$C^1(\overline{\Omega})\ni u\mapsto\frac{\partial u}{\partial n}=(Du,n)_{\RR^N},$$
with $n(\cdot)$ being the outward unit normal on $\partial\Omega$. The boundary coefficient is $\beta\in W^{1,\infty}(\partial\Omega)$ and we assume that $\beta(z)\geq0$ for all $z\in\partial\Omega$. When $\beta\equiv0$, we have the usual Neumann problem.

Our aim in this paper is to prove existence and multiplicity results within this general analytical framework. Recently, there have been such results primarily for Dirichlet problems. We mention the works of
Lan and Tang \cite{11} (with $\xi\equiv0$), Li and Wang \cite{12}, Miyagaki and Souto \cite{14} (with $\xi\equiv0$), Papageorgiou and Papalini \cite{18}, Qin, Tang and Zhang \cite{24}, Wu and An \cite{29}, Zhang-Liu \cite{30}. For Neumann and Robin problems, we mention the works of D'Agui, Marano and Papageorgiou \cite{3}, Papageorgiou and R\u adulescu {\cite{20, 21,pr17}}, {Papageorgiou, R\u adulescu and Repov\v{s} \cite{prr17}}, Papageorgiou and Smyrlis \cite{23}, {Pucci {\it et al.} \cite{apv13, cpv12}}, Shi and Li \cite{26}. Superlinear problems were treated by Lan and Tang \cite{11}, Li and Wang \cite{12}, Miyagaki and Souto \cite{14}, who proved only existence results. The superlinear case was not studied in the context of Neumann and Robin problems.

Our approach uses variational methods based on the critical point theory, together with suitable truncation and perturbation techniques and Morse theory (critical groups).

\section{Mathematical Background}\label{sec2}

Let $X$ be a Banach space and $X^*$ its topological dual. By $\left\langle \cdot,\cdot\right\rangle$ we denote the duality brackets for the pair $(X^*,X)$. Given $\varphi\in C^1(X,\RR)$, we say that $\varphi$ satisfies the ``Cerami condition" (the ``C-condition" for short), if the following property holds:

 ``Every sequence $\{u_n\}_{n\geq1}\subseteq X$ such that $\{\varphi(u_n)\}_{n\geq1}\subseteq\RR$ is bounded and
$$(1+||u_n||)\varphi'(u_n)\rightarrow0\ \mbox{in}\ X^*\ \mbox{as}\ n\rightarrow\infty,$$
\ \ \, admits a strongly convergent subsequence".

This is a compactness-type condition on $\varphi$, which compensates for the fact that the ambient space $X$ is in general not locally compact. It leads to a deformation theorem from which one can derive the minimax theory of the critical values of $\varphi$. A fundamental result of this theory is the so-called ``mountain pass theorem", which we state here in a slightly more general form (see Gasinski and Papageorgiou \cite[p. 648]{6}). {We also point out that Theorem \ref{th1} is a direct consequence  of  Ekeland \cite[Corollaries 4 and 9]{eke}. }

\begin{theorem}\label{th1}
	Let $X$ be a Banach space. Assume that $\varphi\in C^1(X,\RR)$ satisfies the C-condition and for some $u_0,u_1\in X$ with $||u_1-u_0||>r>0$ we have
	$$\max\{\varphi(u_0),\varphi(u_1)\}<\inf[\varphi(u):||u-u_0||=r]=m_r$$
	and $c=\inf\limits_{\gamma\in \Gamma}\max\limits_{0\leq t\leq 1}\varphi(\gamma(t))\ \mbox{with}\ \Gamma=\{\gamma\in C([0,1],X):\gamma(0)=u_0,\gamma(1)=u_1\}$. Then $c\geq m_r$ and $c$ is a critical value of $\varphi$ (that is, there exists $u_0\in X$ such that $\varphi'(u_0)=0$ and $\varphi(u_0)=c$).
\end{theorem}

It is well known that when the functional $\varphi$ has symmetry properties, then we can have an infinity of critical points. In this direction, we mention two such results which we will use in the sequel. The first is the so-called ``symmetric mountain pass theorem" due to Rabinowitz \cite[Theorem 9.12, p. 55]{25} (see also Gasinski and Papageorgiou \cite[Corollary 5.4.35, p. 688]{6}).
\begin{theorem}\label{th2}
	Let $X$ be an infinite dimensional Banach space such that $X=Y\oplus V$ with $Y$ finite dimensional. Assume that $\varphi\in C^1(X,\RR)$ satisfies the C-condition and that
	\begin{itemize}
		\item [(i)] there exist $\vartheta,\rho>0$ such that $\varphi|_{\partial B_{\rho}\cap V}\geq\vartheta>0$ (here $\partial B_\rho=\{x\in X:||x||=\rho\}$);
		\item [(ii)] for every finite dimensional subspace $E\subseteq X$, we can find $ R=R(E)$ such that $\varphi|_{X\backslash B_R}\leq 0$ (here $B_R=\{u\in X:||u||<R\}$).
	\end{itemize}
	Then $\varphi$ has an unbounded sequence of critical points.
\end{theorem}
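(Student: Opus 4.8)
The plan is to run Rabinowitz's $\ZZ_2$-equivariant minimax argument built on the Krasnoselskii genus, using the standing symmetry hypothesis that $\varphi$ is even with $\varphi(0)=0$ (this is what makes the theorem a \emph{symmetric} mountain pass result). First I would recall the genus $\gamma$ defined on the class $\Sigma$ of closed symmetric subsets of $X\setminus\{0\}$, together with its standard properties: monotonicity and subadditivity, supervariance under odd continuous maps, the normalization $\gamma(\partial B_r\cap E)=\dim E$ for finite-dimensional $E$, and the excision/neighbourhood property for compact members of $\Sigma$. Because $\varphi$ is even, $\varphi'$ is odd, so one can build an \emph{odd} locally Lipschitz pseudo-gradient field; combined with the C-condition this produces an equivariant quantitative deformation lemma: for $c\in\RR$, writing $K_c=\{u:\varphi'(u)=0,\ \varphi(u)=c\}$ and letting $U$ be any symmetric neighbourhood of $K_c$, there are $\varepsilon>0$ and an odd homeomorphism $\eta$ of $X$ with $\eta(\varphi^{c+\varepsilon}\setminus U)\subseteq\varphi^{c-\varepsilon}$, where $\varphi^a=\{u:\varphi(u)\le a\}$.

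Next I would set up the minimax values. Put $d=\dim Y$ and $S=\partial B_\rho\cap V$, so that hypothesis (i) reads $\varphi\ge\vartheta>0$ on $S$. Hypothesis (ii) supplies, for every finite-dimensional $E\supseteq Y$, a radius $R(E)$ with $\varphi\le 0$ on $E\setminus B_{R(E)}$; for each $m$ I would fix an $m$-dimensional $E_m\supseteq Y$ and the ``cap'' $D_m=\overline{B_{R(E_m)}}\cap E_m$, whose boundary sphere has genus $m$ and carries $\varphi\le 0$. Following Benci, I would define a pseudo-index $i^*$ relative to $S$ using the odd maps admissible for the deformation, and set
$$c_m=\inf_{\,i^*(A)\ge m}\ \sup_{u\in A}\varphi(u).$$
The two geometric hypotheses then control the two ends of the estimate. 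The linking encoded in $i^*$ forces every admissible competitor to meet $S$, so (i) gives the \emph{uniform} lower bound $c_m\ge\vartheta>0$; the caps $D_m$ together with (ii) exhibit admissible sets of pseudo-index $\ge m$ on which $\sup\varphi<\infty$, giving finiteness $c_m<\infty$, and $c_m\le c_{m+1}$ since the admissible family shrinks as $m$ grows.

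I would then show each $c_m$ is a critical value by the usual contradiction: if not, a near-optimal competitor $A$ can be deformed by the odd $\eta$ into $\varphi^{c_m-\varepsilon}$ while keeping $i^*\ge m$, contradicting the definition of $c_m$ as an infimum. The same mechanism propagates multiplicity: if $c_m=\dots=c_{m+k}=c$, the excision step forces $\gamma(K_c)\ge i^*(K_c)\ge k+1$, so $K_c$ is infinite. For unboundedness, suppose $c_m\nearrow c^*<\infty$. By the C-condition $K_{[\vartheta,c^*]}$ is compact, symmetric and avoids $0$ (as $\varphi(0)=0<\vartheta$), hence has finite genus $q$; choosing a symmetric neighbourhood $U$ with $\gamma(\overline U)=q$ and the corresponding $\varepsilon$, I would take $m$ with $c_m>c^*-\varepsilon$, pick $A$ with $i^*(A)\ge m+q$ and $\sup_A\varphi\le c^*+\varepsilon$, and deform $\overline{A\setminus U}$: subadditivity gives $i^*(\overline{A\setminus U})\ge m$ while $\eta$ pushes it into $\varphi^{c^*-\varepsilon}$, forcing $c_m\le c^*-\varepsilon$ — a contradiction. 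Hence $c_m\to+\infty$, producing an unbounded sequence of critical values and thus infinitely many critical points of $\varphi$.

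I expect the main obstacle to be the precise construction of the pseudo-index $i^*$: one must specify exactly which odd maps are admissible so that $i^*$ is simultaneously monotone, subadditive and supervariant (so it inherits genus from the caps $D_m$ and survives the deformation $\eta$), yet rigidly tied to the \emph{fixed} sphere $S$ so that (i) yields the \emph{same} bound $\vartheta$ for all $m$. Reconciling the single radius $\rho$ of (i) with the growing radii $R(E_m)$ of (ii) is the delicate bookkeeping on which both the positivity $c_m\ge\vartheta$ and the growth $c_m\to\infty$ depend.
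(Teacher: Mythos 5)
First, a point of calibration: the paper does not prove Theorem \ref{th2} at all --- it quotes it as a known result, citing Rabinowitz \cite[Theorem 9.12]{25} and Gasinski--Papageorgiou \cite[Corollary 5.4.35]{6}. So your proposal can only be measured against those classical proofs. Measured that way, your route --- Krasnoselskii genus, an odd pseudo-gradient field giving an equivariant deformation lemma under the C-condition, and Benci's pseudo-index $i^*$ relative to the fixed sphere $S=\partial B_\rho\cap V$ --- is a standard and essentially correct way to prove the theorem; it is Benci's repackaging of Rabinowitz's original argument, in which the nested caps $D_m$ and minimax over odd maps fixing $\partial B_{R(E_m)}\cap E_m$ play exactly the role your pseudo-index plays. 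You were also right to restore the hypotheses that the paper's statement leaves implicit: $\varphi$ even and $\varphi(0)=0$ (this is what ``symmetric'' refers to; in the paper's application, Theorem \ref{th24}, evenness of $\varphi$ comes from the oddness of $f(z,\cdot)$ in $H(f)_6$). Your final contradiction argument for $c_m\rightarrow+\infty$ --- excising a symmetric neighbourhood $U$ of the compact (by the C-condition) set $K_{[\vartheta,c^*]}$ with $\gamma(\overline{U})=\gamma(K_{[\vartheta,c^*]})$, using subadditivity to keep $i^*\bigl(\overline{A\setminus U}\bigr)\geq m$, and pushing down with the odd deformation --- is exactly the right mechanism and is sound, provided one checks that the deformation is admissible (it is: since $c^*\geq\vartheta>0$, it can be supported in a strip above level $0$, hence fixes $\{\varphi\leq0\}$).

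Two caveats, one of which is the real issue. The ingredient you defer --- that for every admissible odd $h$ (equal to the identity on $\{\varphi\leq0\}$, hence, by (ii), on $\partial B_{R(E_m)}\cap E_m$) one has $\gamma\bigl(h(D_m)\cap\partial B_\rho\cap V\bigr)\geq\dim E_m-\dim Y$ --- is not routine bookkeeping but the actual crux of the theorem: it is the intersection lemma underlying \cite[Theorem 9.12]{25}, proved by a genus/degree (Borsuk--Ulam type) argument, and without it the class $\{A:i^*(A)\geq m\}$ could be empty and all your $c_m$ equal to $+\infty$. A complete write-up must prove or cite it, not merely anticipate it; note also the index shift it forces: an $m$-dimensional cap $E_m\supseteq Y$ yields pseudo-index $\geq m-\dim Y$, not $\geq m$, so finiteness reads $c_{m-\dim Y}\leq\max_{D_m}\varphi<\infty$. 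Second, what your argument (like Rabinowitz's) literally produces is an unbounded sequence of critical \emph{values}; to convert this into critical points unbounded in norm --- the phrasing of the theorem in the paper, and what is used in Theorem \ref{th24} --- one needs $\varphi$ to map bounded sets to bounded sets, which holds for the energy functionals in the paper but does not follow from the abstract hypotheses. That deserves one explicit line.
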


The second such abstract multiplicity result that we will need, is a variant of a classical result of Clark \cite{2}, due to Heinz \cite{8} and Kajikiya \cite{10}.
\begin{theorem}\label{th3}
	If $X$ is a Banach space, $\varphi\in C^1(X,\RR)$ satisfies the C-condition, is even and bounded below, $\varphi(0)=0$ and for every $n\in\NN$ there exist an $n$-dimensional subspace $Y_n$ of $X$ and $\rho_n>0$ such that
	$$\sup[\varphi(u):u\in Y_n\cap\partial B_{\rho_n}]<0$$
	then there exists a sequence $\{u_n\}_{n\geq1}\subseteq X$ of critical points of $\varphi$ such that $u_n\rightarrow 0$ in $X$.
\end{theorem}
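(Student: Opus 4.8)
The plan is to prove Theorem \ref{th3} by the standard machinery of Krasnoselskii genus combined with a deformation/minimization argument adapted from Clark's theorem. First I would recall that since $\varphi$ is even with $\varphi(0)=0$, the sublevel sets are symmetric and invariant under the antipodal map, so the genus $\gamma(\cdot)$ is the natural topological index to control them. For each $k\in\NN$ I would set
\begin{equation*}
c_k=\inf_{A\in\Sigma_k}\sup_{u\in A}\varphi(u),\qquad \Sigma_k=\{A\subseteq X\setminus\{0\}:A\ \mbox{closed, symmetric},\ \gamma(A)\geq k\},
\end{equation*}
and the heart of the argument is to show that each $c_k$ is a critical value, that $c_k<0$, and that $c_k\uparrow 0$.

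The key steps, in order, are as follows. First, the hypothesis that $\varphi$ is bounded below guarantees $c_k>-\infty$, while the hypothesis giving an $n$-dimensional subspace $Y_n$ with $\sup_{Y_n\cap\partial B_{\rho_n}}\varphi<0$ produces, via the fact that $\gamma(Y_n\cap\partial B_{\rho_n})=n$, an admissible set in $\Sigma_n$ on which $\varphi$ is negative; hence each $c_k<0$ and in particular $c_k\neq 0$. Second, I would invoke the C-condition together with the even structure to run the equivariant deformation lemma: if $c$ is a critical value with $c<0$, the set $K_c$ of critical points at level $c$ is compact and symmetric (not containing $0$ since $\varphi(0)=0$), so it has a well-defined finite genus, and the standard genus-estimate argument shows that if several consecutive $c_k$ coincide then $\gamma(K_c)\geq 2$, forcing infinitely many critical points at that level. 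Third, the monotonicity $c_k\leq c_{k+1}$ is immediate from $\Sigma_{k+1}\subseteq\Sigma_k$, and since all $c_k<0$ while $\varphi$ is bounded below, the sequence $\{c_k\}$ converges to some limit $c_*\leq 0$; a separate genus argument (using that $\varphi$ bounded below and the C-condition preclude an accumulation of critical values strictly below $0$) forces $c_*=0$, so the critical points $u_k$ one extracts satisfy $\varphi(u_k)\to 0$.

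The final step is to upgrade convergence of the critical \emph{values} to convergence of the critical \emph{points} to $0$ in $X$. Here I would argue by contradiction: if a subsequence of $\{u_k\}$ stayed bounded away from $0$, then since $\varphi'(u_k)=0$ and $\varphi(u_k)\to 0$, the C-condition would yield a convergent subsequence $u_k\to u_*$ with $\varphi(u_*)=0$, $\varphi'(u_*)=0$, and $u_*\neq 0$; combined with the genus count this contradicts the structure of the critical set near level $0$, forcing $u_k\to 0$.

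The main obstacle I expect is the third step: pinning down that the limit of the minimax values $c_k$ is exactly $0$ and that this genuinely yields a \emph{sequence} of distinct critical points converging to the origin, rather than merely finitely many critical values. This is precisely the delicate point where Clark's original formulation was sharpened by Heinz \cite{8} and Kajikiya \cite{10}, and it is where the full strength of the C-condition (compactness of $K_c$ at each negative level) must be combined with the genus subadditivity and continuity properties; the routine deformation estimates are standard, but the careful genus bookkeeping near the accumulation level $0$ is where the real work lies.
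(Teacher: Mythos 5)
You should first be aware that the paper does not prove Theorem \ref{th3} at all: it is quoted as a known abstract result --- Clark's theorem \cite{2} as refined by Heinz \cite{8} and Kajikiya \cite{10} --- and is used later as a black box (in the proof of Theorem \ref{th23}). So your attempt can only be measured against those references, whose overall strategy you have correctly reproduced in outline: Krasnoselskii genus, the minimax values $c_k=\inf_{A\in\Sigma_k}\sup_{u\in A}\varphi(u)$, the bounds $-\infty<c_k\le c_{k+1}<0$ (using $\gamma(Y_n\cap\partial B_{\rho_n})=n$), the criticality of each $c_k$ via an equivariant deformation under the C-condition, and the estimate $\gamma(K_c)\ge 2$ when consecutive levels coincide. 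All of that is standard and sound.

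The genuine gap is in your final step, and it is not a technicality. You argue: if a subsequence of the critical points $u_k$ (with $\varphi(u_k)=c_k\to 0$) stays bounded away from $0$, then the C-condition gives $u_k\to u_*$ with $\varphi(u_*)=0$, $\varphi'(u_*)=0$, $u_*\neq 0$, and ``combined with the genus count this contradicts the structure of the critical set near level $0$.'' There is no such contradiction. Nothing in the hypotheses prevents the minimax critical points from converging to a nonzero critical point at level $0$; this is exactly case (ii) of Kajikiya's theorem, which is explicitly allowed. In that situation the conclusion is not rescued by contradiction but by producing a \emph{different} sequence of critical points: one shows that if there were a ball $B_r$ containing no nonzero critical point with $\varphi\le 0$, then a deformation-plus-genus argument at the level $c=0$ would contradict $c_k\uparrow 0$. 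That is the hard part of the proof, and it cannot be done by naive genus bookkeeping: the critical set at level $0$ contains the origin itself, so the neighborhood one must excise in the deformation lemma contains a punctured ball around $0$, whose genus is infinite in an infinite-dimensional space; Kajikiya's argument is engineered precisely to get around this obstruction. Relatedly, your parenthetical claim that the C-condition ``precludes an accumulation of critical values strictly below $0$'' is false as stated: critical values of $\varphi$ may well accumulate below $0$. What is true (and provable by the standard argument) is that the minimax values $c_k$ cannot accumulate at some $c_*<0$: the set $K_{c_*}$ is compact, symmetric, with $0\notin K_{c_*}$, hence of finite genus $N$; excising a genus-$N$ neighborhood of it from a nearly optimal $A\in\Sigma_{k+N}$ and deforming below $c_*-\varepsilon$ produces a set in $\Sigma_k$ lying below $c_k$, a contradiction. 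So your steps establishing $c_k\to 0$ can be repaired along standard lines, but the passage from $c_k\to 0$ to the existence of critical points converging to $0$ in norm --- the actual content of the Heinz--Kajikiya refinement, as you yourself suspected --- is missing from your argument.
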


In the analysis of problem (\ref{eq1}), we will use the following three spaces:
\begin{itemize}
	\item [$\bullet$] the Sobolev space $H^1(\Omega)$;
	\item [$\bullet$] the Banach space $C^1(\overline{\Omega})$;
	\item [$\bullet$] the ``boundary" Lebesgue spaces $L^q(\partial\Omega)$ with $1\leq q\leq \infty$.
\end{itemize}

The Sobolev space $H^1(\Omega)$ is a Hilbert space with inner product given by
$$(u,h)_{H^1}=\int_\Omega uhdz+\int_\Omega(Du,Dh)_{\RR^N}dz\ \mbox{for all}\ u,h\in H^1(\Omega).$$

By $||\cdot||$ we denote the corresponding norm defined by
$$||u||=\left[ ||u||^2_2 + ||Du||^2_2 \right]^{^1/_2}\ \mbox{for all}\ u\in H^1(\Omega).$$

The Banach space $C^1(\overline{\Omega})$ is an ordered Banach space with positive (order) cone given by
$$C_+=\{u\in C^1(\overline{\Omega}):u(z)\geq0\ \mbox{for all}\ z\in\overline{\Omega}\}.$$

This cone has a nonempty interior given by
$$D_+=\{u\in C_+:u(z)>0\ \mbox{for all}\ z\in\overline{\Omega}\}.$$

On $\partial\Omega$ we consider the $(N-1)$-dimensional Hausdorff (surface) measure $\sigma(\cdot)$. Using this measure, we can define in the usual way the ``boundary" Lebesgue space $L^q(\partial\Omega),1\leq q\leq\infty$. From the theory of Sobolev spaces, we know that there exists a unique continuous linear map $\gamma_0:H^1(\Omega)\rightarrow L^2(\partial\Omega)$, known as the ``trace map", such that
$$\gamma_0(u)=u|_{\partial\Omega}\ \mbox{for all}\ u\in H^1(\Omega)\cap C(\overline{\Omega}).$$

So, the trace map extends the notion of boundary values to every Sobolev function. We know that the map $\gamma_0$ is compact into $L^q(\partial\Omega)$ for all $q\in\left[1,\frac{2(N-1)}{N-2}\right)$ if $N\geq3$ and into $L^q(\partial\Omega)$ for all $q\geq1$ if $N=1,2$. Moreover, we have
$${\rm ker}\,\gamma_0=H^1_0(\Omega)\ \mbox{and}\ {\rm im}\,\gamma_0=H^{\frac{1}{2},2}(\partial\Omega).$$

In the sequel, for the sake of notational simplicity, we drop the use of the trace map. All restrictions of Sobolev functions on $\partial\Omega$ are understood in the sense of traces.

We will need some facts about the spectrum of the differential operator $u\mapsto -\Delta u+\xi(z)u$ with Robin boundary condition. So, we consider the following linear eigenvalue problem:
\begin{equation}\label{eq2}
	\left\{\begin{array}{l}
		-\Delta u(z)+\xi(z)u(z)=\hat{\lambda}u(z)\ \mbox{in}\ \Omega,\\
		\frac{\partial u}{\partial n}+\beta(z)u=0\ \mbox{on}\ \partial\Omega.
	\end{array}\right\}
\end{equation}

We assume that
$$\xi\in L^s(\Omega)\ \mbox{with}\ s>N\ \mbox{and}\ \beta\in W^{1,\infty}(\partial\Omega)\ \mbox{with}\ \beta(z)\geq0\ \mbox{for all}\ z\in\partial\Omega.$$

Let $\gamma:H^1(\Omega)\rightarrow\RR$ be the $C^1$-functional defined by
$$\gamma(u)=||Du||^2_2+\int_\Omega\xi(z)u^2 dz+\int_{\partial\Omega}\beta(z)u^2d\sigma\ \mbox{for all}\ u\in H^1(\Omega).$$

From D'Agui, Marano and Papageorgiou \cite{3}, we know that we can find $\mu>0$ such that
\begin{equation}\label{eq3}
	\gamma(u)+\mu||u||^2_2\geq c_0||u||^2\ \mbox{for all}\ u\in H^1(\Omega),\ \mbox{some}\ c_0>0.
\end{equation}

Using (\ref{eq3}) and the spectral theorem for compact self-adjoint operators on a Hilbert space, we show that the spectrum $\hat{\sigma}$(\ref{eq2}) of (\ref{eq2}) consists of a sequence $\{\hat{\lambda}_k\}_{k\in\NN}$ of eigenvalues such that $\hat{\lambda}_k\rightarrow+\infty$. By $E(\hat{\lambda}_k)$ $(k\in \NN)$ we denote the corresponding eigenspace. These items have the following properties:
\begin{itemize}
	\item [$\bullet$] $\hat{\lambda}_1$ is simple (that is, ${\rm dim}\,E(\hat{\lambda}_1)=1$) and
		\begin{equation}\label{eq4}
			\hat{\lambda}_1=\inf\left[ \frac{\gamma(u)}{||u||^2_2}:u\in H^1(\Omega),u\neq0 \right].
		\end{equation}
	\item [$\bullet$] For every $m\geq2$ we have
		\begin{eqnarray}
			\hat{\lambda}_m & = & \inf\left[ \frac{\gamma(u)}{||u||^2_2}:u\in \overline{\mathop{\oplus}\limits_{k\geq m}E(\hat{\lambda}_k)},u\neq0 \right] \nonumber\\
							& = & \sup\left[ \frac{\gamma(u)}{||u||^2_2}:u\in \underset{\text{k=1}}{\overset{\text{m}}{\oplus}}E(\hat{\lambda}_k), u\neq0 \right].\label{eq5}
		\end{eqnarray}
	\item [$\bullet$] For every $k\in\NN,\ E(\hat{\lambda}_k)$	is finite dimensional, $E(\hat{\lambda}_k)\subseteq C^1(\overline{\Omega})$ and it has the ``unique continuation property" (UCP for short), that is, if $u\in E(\hat{\lambda}_k)$ and vanishes on a set of positive measure, then $u\equiv0$.
\end{itemize}

Note that in (\ref{eq4}) the infimum is realized on $E(\hat{\lambda}_1)$ and in (\ref{eq5}) both the infimum and the supremum, are realized on $E(\hat{\lambda}_m)$. The above properties, imply that the nontrivial elements of $E(\hat{\lambda}_1)$ have constant sign, while the nontrivial elements of $E(\hat{\lambda}_m)$ (for $m\geq2$) are all nodal (that is, sign changing) functions. By $\hat{u}_1$ we denote the $L^2$-normalized (that is, $||\hat{u}_1||_2=1$) positive eigenfunction. We know that $\hat{u}_1\in C_+$ and by the Harnack inequality (see, for example, Motreanu, Motreanu and Papageorgiou \cite[p. 211]{15}), we have $\hat{u}_1(z)>0$ for all $z\in\Omega$. Moreover, assuming that $\xi^+\in L^\infty(\Omega)$ and using the strong maximum principle, we have $\hat{u}_1\in D_+$.

Now let $f_0 :\Omega\times\RR \rightarrow\RR$ be a Carath\'eodory function satisfying
$$|f_0(z,x)|\leq a_0(z)(1+|x|^{2^*-1})\ \mbox{for almost all}\ z\in\Omega,\ \mbox{all}\ x\in\RR\,.$$

We set $F_0(z,x)=\int^x_0 f_0(z,s)ds$ and consider the $C^1$-functional $\varphi_0:H^1(\Omega)\rightarrow\RR$ defined by
$$\varphi_0(u)=\frac{1}{2}\gamma(u)-\int_\Omega F(z,u)dz\ \mbox{for all}\ u\in H^1(\Omega).$$

The next result is a special case of a more general result of Papageorgiou and R\u adulescu \cite{19, 22}.
\begin{prop}\label{prop4}
	Assume that $u_0\in H^1(\Omega)$ is a local $C^1(\overline{\Omega})$-minimizer of $\varphi_0$, that is, there exists $\delta_1>0$ such that
	$$\varphi_0(u_0)\leq\varphi_0(u_0+h)\ \mbox{for all}\ h\in C^1(\overline{\Omega})\ \mbox{with}\ ||h||_{C^1(\overline{\Omega})}\leq\delta_1.$$
	Then $u_0\in C^1(\overline{\Omega})$ and $u_0$ is also a local $H^1(\Omega)$-minimizer of $\varphi_0$, that is, there exists $\delta_2>0$ such that
	$$\varphi_0(u_0)\leq\varphi_0(u_0+h)\ \mbox{for all}\ h\in H^1(\Omega)\ \mbox{with}\ ||h||\leq\delta_2.$$
\end{prop}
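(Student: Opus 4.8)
The plan is to follow the by now classical scheme introduced by Brezis--Nirenberg for comparing $C^1_0$- and $H^1_0$-minimizers, adapted here to the Robin boundary condition and to the indefinite potential $\xi$. The argument splits into a regularity part, producing $u_0\in C^1(\overline{\Omega})$, and a contradiction part, upgrading local $C^1(\overline{\Omega})$-minimality to local $H^1(\Omega)$-minimality. The second part is run by approximating $u_0$ with minimizers of $\varphi_0$ constrained to small $H^1(\Omega)$-balls and showing that these approximants converge to $u_0$ in the $C^1(\overline{\Omega})$-topology.

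First I would record that, being a local $C^1(\overline{\Omega})$-minimizer, $u_0$ is a critical point of $\varphi_0$, hence a weak solution of the Robin problem $-\Delta u_0+\xi(z)u_0=f_0(z,u_0)$ in $\Omega$, $\frac{\partial u_0}{\partial n}+\beta u_0=0$ on $\partial\Omega$. I would then bootstrap: using $\xi\in L^s(\Omega)$ with $s>N$ together with the growth bound $|f_0(z,x)|\le a_0(z)(1+|x|^{2^*-1})$, a Moser/De Giorgi iteration (Brezis--Kato type argument) yields $u_0\in L^\infty(\Omega)$; once $u_0$ is bounded, the right-hand side $f_0(z,u_0)-\xi(z)u_0$ lies in $L^s(\Omega)$ with $s>N$, and the linear Calder\'on--Zygmund theory combined with boundary regularity for Robin conditions (Lieberman-type estimates) gives $u_0\in C^{1,\alpha}(\overline{\Omega})$ for some $\alpha\in(0,1)$. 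This is precisely the local regularity content extracted from \cite{19,22}.

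For the main implication I would argue by contradiction, assuming $u_0$ is not a local $H^1(\Omega)$-minimizer. For each $\epsilon\in(0,1)$ consider the constrained minimization problem
\[
m_\epsilon=\inf\{\varphi_0(u):\|u-u_0\|\le\epsilon\}.
\]
The closed ball is weakly compact and $\varphi_0$ is sequentially weakly lower semicontinuous on it, so the infimum is attained at some $u_\epsilon$ with $\varphi_0(u_\epsilon)\le\varphi_0(u_0)$. If $\varphi_0(u_\epsilon)=\varphi_0(u_0)$ for some $\epsilon$, then $u_0$ is already a local $H^1(\Omega)$-minimizer and we are done; otherwise $\varphi_0(u_\epsilon)<\varphi_0(u_0)$ forces $\|u_\epsilon-u_0\|=\epsilon$, and the Lagrange multiplier rule produces $\mu_\epsilon\ge0$ with $\varphi_0'(u_\epsilon)=-\mu_\epsilon(u_\epsilon-u_0)$ in $H^1(\Omega)^*$. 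Thus each $u_\epsilon$ is a weak solution of a Robin problem whose reaction is a controlled perturbation of $f_0(z,\cdot)$, and by construction $u_\epsilon\to u_0$ in $H^1(\Omega)$ as $\epsilon\to0$.

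I would then rerun the regularity scheme of the second paragraph uniformly in $\epsilon$: since the perturbing term stays bounded and $u_\epsilon\to u_0$ in $H^1(\Omega)$, the $L^\infty$- and $C^{1,\alpha}$-estimates can be taken independent of $\epsilon$, so $\{u_\epsilon\}_{\epsilon}$ is bounded in $C^{1,\alpha}(\overline{\Omega})$. By the compact embedding $C^{1,\alpha}(\overline{\Omega})\hookrightarrow C^1(\overline{\Omega})$ and the $H^1$-convergence, we conclude $u_\epsilon\to u_0$ in $C^1(\overline{\Omega})$. Hence for $\epsilon$ small enough $\|u_\epsilon-u_0\|_{C^1(\overline{\Omega})}\le\delta_1$ while $\varphi_0(u_\epsilon)<\varphi_0(u_0)$, contradicting that $u_0$ is a local $C^1(\overline{\Omega})$-minimizer. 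I expect the main obstacle to be exactly this uniform-in-$\epsilon$ regularity estimate under the (almost) critical growth of $f_0$: because $H^1(\Omega)\hookrightarrow L^{2^*}(\Omega)$ fails to be compact, both the weak lower semicontinuity used to obtain $u_\epsilon$ and the uniform a priori bound require the careful, growth-sensitive arguments carried out in the general result of \cite{19,22}.
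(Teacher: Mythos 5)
You have reconstructed the right architecture: the paper itself contains no proof of Proposition \ref{prop4} (it is quoted as a special case of results in \cite{19,22}), and the proof in those references is exactly the Brezis--Nirenberg scheme you describe --- regularity of the critical point $u_0$, then a contradiction argument via minimizers $u_\epsilon$ of $\varphi_0$ on shrinking $H^1$-balls, a Lagrange multiplier $\mu_\epsilon\geq 0$, uniform $L^\infty$ and $C^{1,\alpha}$ estimates, and the compact embedding $C^{1,\alpha}(\overline{\Omega})\hookrightarrow C^1(\overline{\Omega})$. (One harmless slip: $\varphi_0(u_\epsilon)<\varphi_0(u_0)$ does not force $\|u_\epsilon-u_0\|=\epsilon$; an interior minimizer is simply the case $\mu_\epsilon=0$ of the same multiplier identity, so nothing is lost.)

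The genuine gap is precisely the step you flag and then defer, and it cannot be waved through: under the stated growth $|f_0(z,x)|\leq a_0(z)(1+|x|^{2^*-1})$, which is genuinely critical, the claim that ``$\varphi_0$ is sequentially weakly lower semicontinuous'' on the constraint ball is false as a general property of $\varphi_0$. Indeed, for $F_0(z,x)=|x|^{2^*}$ (admissible under this bound) and $u_n=u+t\,U_{\delta_n}$ with $U_{\delta_n}$ rescaled Aubin--Talenti bubbles concentrating at an interior point, one has $u_n\xrightarrow{w}u$ in $H^1(\Omega)$ while, by Brezis--Lieb, $\varphi_0(u_n)\rightarrow\varphi_0(u)+\frac{t^2}{2}A-t^{2^*}B$ with $A,B>0$, which is $<\varphi_0(u)$ for $t$ large; so the Weierstrass--Tonelli step producing $u_\epsilon$ is unjustified as written. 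It can be repaired in (at least) two ways, and one of them must actually be carried out: either restrict to the almost-critical growth of hypothesis $H(f)_1(i)$ --- which is all this paper ever uses --- where the splitting $|F_0(z,x)|\leq\varepsilon|x|^{2^*}+c_\varepsilon|x|$ plus Vitali's theorem (the same device as in Proposition \ref{prop5}) makes $u\mapsto\int_\Omega F_0(z,u)dz$ sequentially weakly continuous on bounded sets; or, in the truly critical case, prove weak lower semicontinuity \emph{on sufficiently small balls} by a Brezis--Lieb decomposition, using that a profile $w_n\xrightarrow{w}0$ with $\|w_n\|\leq 2\epsilon$ satisfies $\frac{1}{2}\gamma(w_n)-C\|w_n\|^{2^*}\geq 0+o(1)$ once $C(2\epsilon)^{2^*-2}<\frac{c_0}{2}$. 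A second, related hole: your ``uniform-in-$\epsilon$'' $L^\infty$ and $C^{1,\alpha}$ bounds do not come for free, because the Brezis--Kato weight $b_\epsilon=a_0(1+|u_\epsilon|^{2^*-2})$ lies only in $L^{N/2}(\Omega)$, and estimates from an $L^{N/2}$ weight are not uniform over a family unless the weights are uniformly integrable. The observation that closes this (absent from your sketch) is that by construction $u_\epsilon\rightarrow u_0$ \emph{strongly} in $H^1(\Omega)$, hence in $L^{2^*}(\Omega)$, so $b_\epsilon\rightarrow b_0$ in $L^{N/2}(\Omega)$ (note $(2^*-2)\frac{N}{2}=2^*$); this rules out concentration along the family and makes the Moser/Brezis--Kato iteration, and then the Calder\'on--Zygmund and Lieberman estimates, uniform. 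With these two points supplied your argument closes; without them it is a correct skeleton whose load-bearing steps are still outsourced to \cite{19,22}.
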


Next let us recall a few basic definitions and facts from Morse theory, which we will need in the sequel. So, let $X$ be a Banach space, $\varphi\in C^1(X,\RR)$ and $c\in\RR$. We introduce the following sets:
$$\varphi^c=\{ u\in X:\varphi(u)\leq c \},\ K_\varphi=\{ u\in X:\varphi'(u)=0 \},\ K^c_\varphi=\{ u\in K_\varphi:\varphi(u)=c \}.$$

Let $(Y_1,Y_2)$ be a topological pair such that $Y_2\subseteq Y_1\subseteq X$. For $k\in\NN_0$, let $H_k(Y_1,Y_2)$ denote the $k$th relative singular homology group for the pair $(Y_1,Y_2)$ with integer coefficients (for $k\in-\NN$, we have $H_k(Y_1,Y_2)=0$). Given $u_0\in K^c_\varphi$ isolated, the critical groups of $\varphi$ at $u_0$ are defined by
$$C_k(\varphi,u_0)=H_k(\varphi^c\cap U,\varphi^c\cap U\backslash\{u_0\})\ \mbox{for all}\ k\in\NN_0,$$
with $U$ being a neighbourhood of $u_0$ satisfying $\varphi^c\cap K_\varphi\cap U=\{u_0\}$. The excision property of singular homology implies that this definition of critical groups is independent of the choice of the neighbourhood $U$.

Suppose that $\varphi$ satisfies the C-condition and that $\inf\varphi(K_\varphi)>-\infty$. Let $c<\inf\varphi(K_\varphi)$. The critical groups of $\varphi$ at infinity, are defined by
$$C_k(\varphi, \infty)=H_k(X,\varphi^c)\ \mbox{for all}\ k\in\NN_0.$$

This definition is independent of the choice of $c<\inf\varphi(K_\varphi)$. Indeed, let $c<\hat{c}<\inf\varphi(K_\varphi)$. Then from a corollary of the second deformation theorem (see Motreanu, Motreanu and Papageorgiou \cite[Corollary 5.35, p. 115]{15}) we have that
$$\varphi^c\ \mbox{is a strong deformation retract of}\ \varphi^{\hat{c}}.$$

Therefore, we have
$$H_k(X, \varphi^c)=H_k(X,\varphi^{\hat{c}})\ \mbox{for all}\ k\in\NN_0.$$

We assume that $K_\varphi$ is finite and introduce the following quantities:
\begin{eqnarray*}
	M(t,u)		& = & \sum_{k\geqslant0}\ \mbox{rank}\ C_k(\varphi, u)t^k\ \mbox{for all}\ t\in\RR \mbox{, all}\ u\in K_\varphi,\\
	P(t,\infty)	& =	& \sum_{k\geqslant0}\ \mbox{rank}\ C_k(\varphi, \infty)t^k\ \mbox{for all}\ t\in\RR.
\end{eqnarray*}

Then the Morse relation says that
\begin{equation}\label{eq6}
	\sum_{u\in K_\varphi}\ M(t,u) = P(t,\infty)+(1+t)Q(t),
\end{equation}
where $Q(t)=\sum_{k\geqslant0}\hat{\beta}_k t^k$ is a formal series in $t\in\RR$, with nonnegative integer coefficients $\hat{\beta}_k$.

Finally we fix our notation. So, for $x\in\RR$, we set $x^\pm=\max\{\pm x,0\}$. Then for $u\in H^1(\Omega)$ we define $u^\pm(\cdot)=u(\cdot)^\pm$ and we have
$$u=u^+-u^-\ \mbox{,}\quad | u|=u^+-u^-\ \mbox{,}\quad u^\pm\in H^1(\Omega).$$

Given a measurable function $g:\Omega\times\RR\rightarrow\RR$ (for example, a Carath\'eodory function), by $N_g$ we denote the Nemytskii map corresponding to $g$, that is,
$$N_g(u)(\cdot)=g(\cdot,u(\cdot))\ \mbox{for all}\ u\in H^1(\Omega).$$

Evidently, $z\mapsto N_g(u)(z)$ is measurable on $\Omega$. By $|\cdot|_N$ we denote the Lebesgue measure on $\RR^N$. We set
$$m_+=\min\{k\in\NN:\hat{\lambda}_k>0\}\ \mbox{and}\ m_-=\max\{k\in\NN:\hat{\lambda}_k<0\}.$$

Then we have the following orthogonal direct sum decomposition of the Sobolev space $H^1(\Omega)$:
$$H^1(\Omega)=H_-\oplus E(0)\oplus H_+$$
with $H_-=\underset{\text{k=1}}{\overset{\text{m}_-}{\oplus}} E(\hat{\lambda}_k)$, $H_+=\overline{\mathop{\oplus}\limits_{k\geqslant m_+}E(\hat{\lambda}_k)}$. So, every $u\in H^1(\Omega)$ admits a unique sum decomposition
$$u=\overline{u}+u^0+\hat{u}\ \mbox{, with}\ \overline{u}\in H_-\ \mbox{, }\ u^0\in E(0),\ \hat{u}\in H_+$$
\begin{itemize}
	\item [] If $0\notin\hat{\sigma}(2)=\{\hat{\lambda}_k\}_{k\in\NN}$, then $E(0)={0}$ and $m_-=m_+-1$.
	\item [] If $\xi\geqslant0$ and $\xi\not\equiv 0$ or $\beta\not\equiv0$, then $\hat{\lambda}_1>0$ and so $m_+=1$ and $m_-=0$.
	\item [] If $\xi\equiv0$, $\beta\equiv0$ (Neumann problem with zero potential), then
	$$m_+=2,\ m_-=0,\ E(0)=\RR.$$
	\item [] If $u,v\in H^1(\Omega)$, and $v\leq u$, then by $[v,u]$ we denote the order interval defined by
	$$[v,u]=\{y\in H^1(\Omega):\ v(z)\leq y(z)\leq u(z)\ \mbox{for almost all}\ z\in\Omega\}.$$
\end{itemize}

\section{Existence Theorems}

In this section we prove two existence theorems. The two existence results differ on the geometry near the origin of the energy (Euler) functional.

For the first existence theorem, we assume that $f(z,\cdot)$ is strictly sublinear near the origin. More precisely, our hypotheses on the data of problem (\ref{eq1}) are the following:
\begin{itemize}
	\item [$H(\xi):$] $\xi\in L^s(\Omega)$ with $s>\NN$.
	\item [$H(\beta):$] $\beta\in W^{1,\infty}(\partial\Omega)$ with $\beta(z)\geqslant0$ for all $z\in\partial\Omega$.
	\begin{remark}
		When $\beta\equiv0$, we have the usual Neumann problem.
	\end{remark}
	\item [$H(f)_1:$] $f:\Omega\times\RR\rightarrow\RR$ is a Carath\'eodory function such that
	\begin{itemize}
		\item [$(i)$] for every $\rho>0$, there exists $a_\rho\in L^\infty(\Omega)_+$ such that
			$$| f(z,x)|\leq a_\rho(z)\ \mbox{for almost all}\ z\in\Omega\ \mbox{, all}\ | x|\leq\rho$$
			$$\mbox{and}\ \lim_{x\rightarrow\pm\infty}\frac{f(z,x)}{(x)^{2^*-2}x}=0\ \mbox{uniformly for almost all}\ z\in\Omega;$$
		\item [$(ii)$] if $F(z,x)=\int_0^xf(z,s)ds$ and $\tau(z,x)=f(z,x)x-2F(z,x)$, then\\ $\lim_{x\rightarrow\pm\infty}\frac{F(z,x)}{x^2}=+\infty$ uniformly for almost all $z\in\Omega$ and there exists $e\in L^1(\Omega)$ such that
			$$\tau(z,x)\leq\tau(z,y)+e(z)\ \mbox{for almost all}\ z\in\Omega\ \mbox{, all}\ 0\leq x\leq y\ \mbox{and all}\ y\leq x\leq 0;$$
		\item [$(iii)$] $\lim_{x\rightarrow0}\frac{f(z,x)}{x}=0$ uniformly for almost all $z\in\Omega$ and there exists $\delta>0$ such that
			\begin{eqnarray*}
				&[a]& F(z,x)\leq0\ \mbox{for almost all}\ z\in\Omega \mbox{, all}\ | x|\leq\delta\mbox{,}\\
				\mbox{or}&[b]& F(z,x)\geqslant0\ \mbox{for almost all}\ z\in\Omega \mbox{, all}\ |x|\leq\delta\mbox{.}
			\end{eqnarray*}
	\end{itemize}
\end{itemize}
\begin{remark}
Hypothesis $H(f)_1(i)$ is more general than the usual subcritical polynomial growth which says that
$$| f(z,x)|\leq c_1(1+| x|^{r-1})\ \mbox{for almost all}\ z\in\Omega\ \mbox{, all}\ x\in\RR,$$
with $c_1>0$ and $1\leq r<2^*$. Here the growth of $f(z,\cdot)$ is almost critical and this means we face the difficulty that the embedding of $H^1(\Omega)$ into $L^{2^*}(\Omega)$ is not compact. We overcome this difficulty without use of the concentration-compactness principle. Instead we use Vitali's theorem. Hypothesis $H(f)_1(ii)$ implies that
$$\lim_{x\rightarrow\pm\infty}\frac{f(z,x)}{x}=+\infty\ \mbox{uniformly for almost all}\ z\in\Omega.$$
\end{remark}

Therefore $f(z,\cdot)$ is superlinear near $\pm\infty$. Usually such problems are studied using the so-called Ambrosetti-Rabinowitz condition (the AR-condition for short). We recall that the AR-condition says that there exist $q>2$ and $M>0$ such that
\begin{align}
	0 & <  q\ F(z,x)\leq f(z,x)x\ \mbox{for almost all}\ z\in\Omega\ \mbox{, all}\ |x|\geqslant M\label{eq7a}\tag{7a}\\
	0 & \displaystyle <  {\rm essinf}_\Omega F(\cdot,\pm M).\label{eq7b}\tag{7b}
\end{align}
\stepcounter{equation}
Integrating (\ref{eq7a}) and using (\ref{eq7b}), we obtain the following weaker condition
\begin{equation}\label{eq8}
	c_2 | x|^q\leq F(z,x)\ \mbox{for almost all}\ z\in\Omega\ \mbox{, all}\ | x|\geqslant M.
\end{equation}

From (\ref{eq7a}) and (\ref{eq8}), we see that $f(z,\cdot)$ has at least ($q-1$)-polynomial growth. This restriction removes from consideration superlinear functions with ''slower'' growth near $\pm\infty$. For example, consider a function $f(x)$ which satisfies:
$$f(x)=x\left[\ln| x|+\frac{1}{2}\right]\ \mbox{for all}\ | x|\geqslant M.$$

In this case the primitive is $F(x)=\frac{1}{2}x^2\ln |x|$ for all $| x|\geq M$ and so (\ref{eq3}) fails. In particular, then the AR-condition (see (\ref{eq7a}) and (\ref{eq7b})) does not hold. In contrast $f(\cdot)$ satisfies our hypothesis $H(f)_1(ii)$. This condition is a slightly more general form of a condition used by Li and Yang \cite{12}. It is satisfied, if there exists $M>0$ such that
\begin{eqnarray*}
	x\mapsto\frac{f(z,x)}{x}\ \mbox{is nondecreasing on}\ [M,+\infty),\\
	x\mapsto\frac{f(z,x)}{x}\ \mbox{is nonincreasing on}\ (-\infty,-M].
\end{eqnarray*}

Hypothesis $H(f)_1(iii)$ implies that $f(z,\cdot)$ is sublinear near zero.

\textbf{Examples}: The following functions satisfy hypotheses $H(f)_1$. For the sake of simplicity, we drop the $z$-dependence:
\begin{equation*}
	f_1(x)=x\left[\ln(1+|x|)+\frac{|x|}{1+|x|}\right]\ \mbox{for all}\ x\in\RR
\end{equation*}
and
\begin{equation*}
	f_2(x)=\left\{
		\begin{array}{ll}
			\frac{| x|^{2^*-2}x}{\ln(1+|x|)} \left[1-\frac{1}{2^*}\frac{|x|}{(1+| x|)\ln(1+|x|)}\right]-c &\ \mbox{if}\ x<-1\\
			|x|^{r-2}x &\ \mbox{if}\ -1\leq x\leq 1\\
			\frac{| x|^{2^*-2}x}{\ln(1+| x|)} \left[1-\frac{1}{2^*}\frac{| x|}{(1+|x|)\ln(1+| x|)}\right]+c &\ \mbox{if}\ 1<x
		\end{array}
	\right.
\end{equation*}
with $r>2$ and $c=1-\frac{1}{\ln{2}}\left[1-\frac{1}{2^*}\frac{1}{2\ln{2}}\right]$.

Note that we have
\begin{equation*}
	F_1(x) = \frac{1}{2}x^2\ln{(1+|x|)}\ \mbox{for all}\ x\in\RR
\end{equation*}
\begin{equation*}
	F_2(x) = \frac{1}{2^*}\frac{|x|^{2^*}}{\ln{(1+| x|)}} + c|x|\ \mbox{for all}\ |x| \geqslant1.
\end{equation*}

Observe that $f_1(\cdot)$ although superlinear, fails to satisfy the AR-condition, while $f_2$ has almost critical growth.

Let $\varphi:H^1(\Omega)\rightarrow\RR$ be the energy (Euler) functional for problem (\ref{eq1}) defined by
$$\varphi(u)=\frac{1}{2}\gamma(u)-\int_{\Omega}F(z,u)dz\ \mbox{for all}\ u\in H^1(\Omega).$$

Evidently, $\varphi\in C^1(H^1(\Omega))$. First we show that the functional $\varphi$ satisfies the C-condition.

\begin{prop}\label{prop5}
	If hypotheses $H(\xi),H(\beta),H(f)_1(i),(ii)$ hold, then the functional $\varphi$ satisfies the $C$-condition.
\end{prop}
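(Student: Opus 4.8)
The plan is to take an arbitrary Cerami sequence $\{u_n\}_{n\ge1}\subseteq H^1(\Omega)$, so that $|\varphi(u_n)|\le M_1$ for some $M_1>0$ and $(1+||u_n||)\varphi'(u_n)\to0$ in $H^1(\Omega)^*$, and to show it admits a strongly convergent subsequence. The argument splits into two independent parts: proving that $\{u_n\}$ is bounded, and then upgrading boundedness to strong convergence. For bookkeeping I would first record the two elementary consequences of the Cerami property: testing $\varphi'(u_n)$ against $u_n$ gives $|\langle\varphi'(u_n),u_n\rangle|\le\varepsilon_n$ with $\varepsilon_n\to0$, and combining this with $|\varphi(u_n)|\le M_1$ yields the crucial bound
$$\int_\Omega\tau(z,u_n)\,dz = 2\varphi(u_n)-\langle\varphi'(u_n),u_n\rangle \le M_2$$
for some $M_2>0$, where $\tau(z,x)=f(z,x)x-2F(z,x)$ as in $H(f)_1(ii)$.

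For boundedness I would argue by contradiction, assuming (along a subsequence) $||u_n||\to\infty$ and setting $y_n=u_n/||u_n||$, so $||y_n||=1$ and, passing to a further subsequence, $y_n\rightharpoonup y$ in $H^1(\Omega)$, $y_n\to y$ in $L^2(\Omega)$ and $L^2(\partial\Omega)$ and $y_n\to y$ a.e. in $\Omega$. If $y\ne0$, then on the set $\{y\ne0\}$ of positive measure $|u_n(z)|\to+\infty$, so the superlinearity $F(z,x)/x^2\to+\infty$ from $H(f)_1(ii)$ together with Fatou's lemma (using a standard uniform lower bound on $F(z,x)/x^2$ to justify it) forces $\int_\Omega F(z,u_n)\,dz/||u_n||^2\to+\infty$; this contradicts $\int_\Omega F(z,u_n)\,dz/||u_n||^2 = \tfrac12\gamma(y_n)-\varphi(u_n)/||u_n||^2 \le \tfrac12\gamma(y_n)+o(1)$, which stays bounded since $||y_n||=1$. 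If $y=0$, I would use the maximization device: choose $t_n\in[0,1]$ with $\varphi(t_nu_n)=\max_{0\le t\le1}\varphi(tu_n)$. For fixed $c>0$ and $n$ large, $cy_n=(c/||u_n||)u_n$ is admissible, so $\varphi(t_nu_n)\ge\varphi(cy_n)=\tfrac{c^2}{2}\gamma(y_n)-\int_\Omega F(z,cy_n)\,dz$. Since $y_n\to0$ in $L^2(\Omega)$ and $L^2(\partial\Omega)$ one checks $\gamma(y_n)\to1$, while the almost critical growth lets me send $\int_\Omega F(z,cy_n)\,dz\to0$ (this is one place where Vitali's theorem or an $\varepsilon$-splitting is convenient), so $\varphi(t_nu_n)\to+\infty$ as $c$ is arbitrary. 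On the other hand, for $t_n\in(0,1)$ the identity $\langle\varphi'(t_nu_n),t_nu_n\rangle=0$ gives $\varphi(t_nu_n)=\tfrac12\int_\Omega\tau(z,t_nu_n)\,dz$, and the quasi-monotonicity of $\tau$ in $H(f)_1(ii)$ (applied with $0\le t_nu_n\le u_n$ or $u_n\le t_nu_n\le0$) bounds this by $\tfrac12\int_\Omega\tau(z,u_n)\,dz+\tfrac12||e||_1\le\tfrac12(M_2+||e||_1)$; the cases $t_n\in\{0,1\}$ are immediate. This contradicts $\varphi(t_nu_n)\to+\infty$, so $\{u_n\}$ is bounded.

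With boundedness in hand I would pass to $u_n\rightharpoonup u$ in $H^1(\Omega)$, with $u_n\to u$ in $L^r(\Omega)$ for every $r<2^*$ and in $L^2(\partial\Omega)$, and a.e. Testing with $h=u_n-u$ gives $\gamma(u_n,u_n-u)-\int_\Omega f(z,u_n)(u_n-u)\,dz\to0$. The genuinely delicate point, and the reason the non-compactness of $H^1(\Omega)\hookrightarrow L^{2^*}(\Omega)$ matters, is to show $\int_\Omega f(z,u_n)(u_n-u)\,dz\to0$: I would fix $\varepsilon>0$, use the almost critical bound $|f(z,x)|\le\varepsilon|x|^{2^*-1}$ for $|x|\ge M$ together with $|f(z,x)|\le a_M(z)$ for $|x|\le M$, split the integral over $\{|u_n|\le M\}$ and $\{|u_n|>M\}$, control the first piece by $||a_M||_2\,||u_n-u||_2\to0$ and the second by $\varepsilon\,||u_n||_{2^*}^{2^*-1}||u_n-u||_{2^*}\le\varepsilon C$ via H\"older, and let $\varepsilon\downarrow0$. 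Then $\gamma(u_n,u_n-u)\to0$, and since $\gamma(u,u_n-u)\to0$ by weak convergence, I obtain $\gamma(u_n-u,u_n-u)\to0$. Finally the lower-order terms $\int_\Omega\xi(u_n-u)^2\,dz$ and $\int_{\partial\Omega}\beta(u_n-u)^2\,d\sigma$ vanish (compact embedding for $\xi\in L^s$ with $s>N$, and compact trace), leaving $||D(u_n-u)||_2^2\to0$, which together with $u_n\to u$ in $L^2(\Omega)$ yields $u_n\to u$ in $H^1(\Omega)$.

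The main obstacle is the boundedness step, and within it the case $y=0$: without the AR-condition no coercivity is available directly, and the maximization-over-$[0,1]$ argument is precisely what converts the estimate $\int_\Omega\tau(z,u_n)\,dz\le M_2$ and the quasi-monotonicity hypothesis $H(f)_1(ii)$ into a contradiction. A secondary but real technical difficulty is the almost critical growth in the convergence step, where the usual compact-embedding argument is unavailable and must be replaced by the $\varepsilon$-splitting (or Vitali's theorem) described above.
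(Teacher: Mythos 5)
Your proposal is correct and follows essentially the same route as the paper: the same key bound $\int_\Omega\tau(z,u_n)\,dz\le M_2$, the same contradiction argument for boundedness split into the cases $y\neq0$ (Fatou's lemma) and $y=0$ (the maximization device over $[0,1]$ combined with the quasi-monotonicity of $\tau$ from $H(f)_1(ii)$), and the same final step of testing with $h=u_n-u$. The only differences are cosmetic: you check $\gamma(y_n)\to1$ directly where the paper invokes the coercivity estimate (3), you spell out the $\varepsilon$-splitting/H\"older argument showing $\int_\Omega f(z,u_n)(u_n-u)\,dz\to0$ (a detail the paper leaves implicit, and indeed the delicate point under almost critical growth), and you conclude by estimating $\gamma(u_n-u)$ directly instead of appealing to the Kadec--Klee property.
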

\begin{proof}
	We consider a sequence $\{u_n\}_{n\geq1}\subseteq H^1(\Omega)$ such that
	\begin{eqnarray}
		|\varphi(u_n)|\leq M_1\ \mbox{for some}\ M_1>0,\ \mbox{all}\ n\in\NN, \label{eq9}\\
		(1+||u_n||)\varphi'(u_n)\rightarrow0\ \mbox{in}\ H^1(\Omega)^*\ \mbox{as}\ n\rightarrow\infty \label{eq10}.
	\end{eqnarray}
	
	From (\ref{eq10}) we have
	\begin{equation}\label{eq11}
		\left| \left\langle A(u_n),h \right\rangle+\int_\Omega\xi(z)u_nhdz+\int_{\partial\Omega}\beta(z)u_nhd\sigma-\int_\Omega f(z,u_n)hdz \right|\leq\frac{\varepsilon_n||h||}{1+||u_n||}
	\end{equation}
	for all $h\in H^1(\Omega)$, with $\varepsilon_n\rightarrow 0^+$.
	
	In (\ref{eq11}) we choose $h=u_n\in H^1(\Omega)$ and obtain
	\begin{equation}\label{eq12}
		-\gamma(u_n)+\int_\Omega f(z,u_n)u_n dz\leq\varepsilon_n\ \mbox{for all}\ n\in\NN.
	\end{equation}
	
	On the other hand, by (\ref{eq9}), we have
	\begin{equation}\label{eq13}
		\gamma(u_n)-\int_\Omega 2F(z,u_n)dz\leq2M_1\ \mbox{for all}\ n\in\NN.
	\end{equation}
	
	We add (\ref{eq12}) and (\ref{eq13}) and obtain
	\begin{equation}\label{eq14}
		\int_\Omega\tau(z,u_n)dz\leq M_2\ \mbox{for some}\ M_2>0,\ \mbox{all}\ n\in\NN.
	\end{equation}
	\begin{claim}
		$\{u_n\}_{n\geq1}\subseteq H^1(\Omega)$ is bounded
	\end{claim}
	
	We argue by contradiction. So, we assume that the Claim is not true. Then by passing to a subsequence if necessary, we have
	\begin{equation}\label{eq15}
		||u_n||\rightarrow+\infty .
	\end{equation}
	
	Let $y_n=\frac{u_n}{||u_n||}$ for all $n\in\NN$. Then $||y_n||=1$ and so we may assume that
	\begin{equation}\label{eq16}
		y_n\xrightarrow{w}y\ \mbox{in}\ H^1(\Omega)\ \mbox{and}\ y_n\rightarrow y\ \mbox{in}\ L^{\frac{2s}{s-1}}(\Omega)\ \mbox{and in}\ L^2(\partial\Omega)
	\end{equation}
	(note that since $s>N$ (see hypothesis $H(\xi)$), we have $\frac{2s}{s-1}<2^*$).
	
	First, we assume that $y\neq0$. Let $\Omega^*=\{z\in\Omega:y(z)\neq0\}$. Then $|\Omega^*|_N>0$ and we have
	$$|u_n(z)|\rightarrow+\infty\ \mbox{for almost all}\ z\in\Omega^*\ \mbox{(see (\ref{eq15}))}.$$
	
	Using hypothesis $H(f)_1(ii)$ we have
	$$\frac{F(z,u_n(z))}{||u_n||^2}=\frac{F(z,u_n(z))}{u_n(z)^2}y_n(z)^2\rightarrow+\infty \ \mbox{for almost all}\ z\in\Omega^*.$$
	
	Using Fatou's lemma we can say that
	\begin{equation}\label{eq17}
		\int_{\Omega^*}\frac{F(z,u_n)}{||u_n||^2}dz\rightarrow+\infty\ \mbox{as}\ n\rightarrow\infty
	\end{equation}
	
	Hypothesis $H(f)(ii)$ implies that we can find $M_3>0$ such that
	\begin{equation}\label{eq18}
		F(z,x)\geq0\ \mbox{for almost all}\ z\in\Omega,\ \mbox{all}\ |x|\geq M_3.
	\end{equation}
	
	From (\ref{eq15}) we see that we may assume that
	\begin{equation}\label{eq19}
		||u_n||\geq1\ \mbox{for all}\ n\in\NN.
	\end{equation}
	
	Then we have
	\begin{eqnarray*}
		\int_\Omega\frac{F(z,u_n)}{||u_n||^2}dz & = & \int_{\Omega^*}\frac{F(z,u_n)}{||u_n||^2}dz+\int_{\Omega\backslash\Omega^*}\frac{F(z,u_n)}{||u_n||^2}dz \\
												& = & \int_{\Omega^*}\frac{F(z,u_n)}{||u_n||^2}dz+\int_{(\Omega\backslash\Omega^*)\cap\{|u_n|\geq M_3\}} \frac{F(z,u_n)}{||u_n||^2}dz \\
												&	& + \int_{(\Omega\backslash\Omega^*)\cap\{|u_n|<M_3\}} \frac{F(z,u_n)}{||u_n||^2}dz\\
												& \geq & \int_{\Omega^*}\frac{F(z,u_n)}{||u_n||^2}dz-c_3\ \mbox{for some}\ c_3>0,\ \mbox{all}\ n\in\NN\\
												&	& \mbox{(see (\ref{eq18}), (\ref{eq19}) and hypothesis $H(f)_1(i)$)}
	\end{eqnarray*}
	\begin{equation}\label{eq20}
		\Rightarrow\int_\Omega\frac{F(z,u_n)}{||u_n||^2}dz\rightarrow+\infty\ \mbox{as}\ n\rightarrow\infty\ \mbox{(see (\ref{eq17}))}.
	\end{equation}
	
	By (\ref{eq9}) we have
	\begin{equation}\label{eq21}
		\int_\Omega\frac{F(z,u_n)}{||u_n||^2}dz\leq\frac{M_1}{||u_n||^2}+\frac{1}{2}\gamma(y_n)\leq M_4\ \mbox{for some}\ M_4>0, \mbox{all}\ n\in\NN
	\end{equation}
	(see (\ref{eq19})).
	
	Comparing (\ref{eq20}) and (\ref{eq21}), we get a contradiction.
	
	Now suppose that $y=0$. Let $\eta>0$ and set $v_n=(2\eta)^{^1/_2}y_n\in H^1(\Omega)$ for all $n\in\NN$. Then from (\ref{eq16}) and since $y=0$, we have
	\begin{equation}\label{eq22}
		v_n\xrightarrow{w}0\ \mbox{in}\ H^1(\Omega)\ \mbox{and}\ v_n\rightarrow0\ \mbox{in}\ L^{\frac{2s}{s-1}}(\Omega)\ \mbox{and in}\ L^2(\partial\Omega).
	\end{equation}
	
	Let $c_4=\sup\limits_{n\geq1}||v_n||^{2^*}_{2^*}$ (see (\ref{eq22})). From hypothesis $H(f)_1(i)$ we see that given $\varepsilon>0$, we can find $c_5=c_5(\varepsilon)>0$ such that
	\begin{equation}\label{eq23}
		|F(z,x)|\leq\frac{\varepsilon}{2c_4}|x|^{2^*}+c_5\ \mbox{for almost all}\ z\in\Omega,\ \mbox{all}\ x\in\RR.
	\end{equation}
	
	Let $E\subseteq\Omega$ be a measurable set such that $|E|_N\leq\frac{\varepsilon}{2c_5}$. Then we have
	\begin{eqnarray*}
		\left| \int_E F(z,v_n)dz \right| & \leq & \int_E |F(z,v_n)|dz \\
										 & \leq & \frac{\varepsilon}{2c_4}||v_n||^{2^*}_{2^*}+c_5|E|_N\ \mbox{(see (\ref{eq23}))} \\
										 & \leq & \varepsilon\ \mbox{for all}\ n\in\NN,
	\end{eqnarray*}
	$$\Rightarrow\{F(\cdot,v_n(\cdot))\}_{n\geq1}\subseteq L^1(\Omega)\ \mbox{is uniformly integrable}.$$
	
	Also note that by (\ref{eq22}) and by passing to a subsequence if necessary, we have
	$$F(z,v_n(z))\rightarrow0\ \mbox{for almost all}\ z\in\Omega\ \mbox{as}\ n\rightarrow+\infty\,.$$
	
	So, invoking Vitali's theorem (the extended dominated convergence theorem) we have
	\begin{equation}\label{eq24}
		\int_\Omega F(z,v_n)dz\rightarrow0\ \mbox{as}\ n\rightarrow\infty\,.
	\end{equation}
	
	From (\ref{eq15}), we see that we can find $n_0\in\NN$ such that
	\begin{equation}\label{eq25}
		0<(2\eta)^{^1/_2}\frac{1}{||u_n||}\leq1\ \mbox{for all}\ n\geq n_0.
	\end{equation}
	
	We choose $t_n\in[0,1]$ such that
	\begin{equation}\label{eq26}
		\varphi(t_n u_n)=\max\left[ \varphi(tu_n):0\leq t\leq1 \right].
	\end{equation}
	
	From (\ref{eq25}) and (\ref{eq26}) we have
	\begin{eqnarray}
		\varphi(t_n u_n) & \geq & \varphi(v_n)\nonumber \\
						 & =		& \eta\left[ \gamma(y_n)+\mu||y_n||^2_2 \right]-\int_\Omega F(z,v_n)dz-\mu\eta||y_n||^2_2\nonumber \\
						 & \geq & \eta\left[ c_0-\mu||y_n||^2_2 \right]-\int_\Omega F(z,v_n)dz\ \mbox{for all}\ n\geq n_0\label{eq27} \\
						 &		& \mbox{(see (\ref{eq3}))}.\nonumber
	\end{eqnarray}
	
	Recall that $y=0$. So, from (\ref{eq16}), (\ref{eq24}) and (\ref{eq27}), we see that we can find $n_1\in\NN,\ n_1\geq n_0$ such that
	$$\varphi(t_n u_n)\geq\frac{1}{2}\eta c_0\ \mbox{for all}\ n\geq n_1.$$
	
	But $\eta>0$ was arbitrary. Therefore it follows that
	\begin{equation}\label{eq28}
		\varphi(t_n u_n)\rightarrow+\infty\ \mbox{as}\ n\rightarrow\infty\,.
	\end{equation}
	
	We have
	\begin{equation}\label{eq29}
		\varphi(0)=0\ \mbox{and}\ \varphi(u_n)\leq M_1\ \mbox{for all}\ n\in\NN\ \mbox{(see (\ref{eq9}))}.
	\end{equation}
	
	From (\ref{eq28}) and (\ref{eq29}) we see that we can find $n_2\in\NN$ such that
	\begin{equation}\label{eq30}
		t_n\in(0,1)\ \mbox{for all}\ n\geq n_2.
	\end{equation}
	
	From (\ref{eq26}) and (\ref{eq30}), we have
	\begin{eqnarray}
		&	& \frac{d}{dt}\varphi(tu_n)|_{t=t_n}=0\ \mbox{for all}\ n\geq n_2, \nonumber \\
		& \Rightarrow & \langle \varphi' (t_n u_n),t_n u_n\rangle =0\ \mbox{for all}\ n\geq n_2\ \mbox{(by the chain rule),} \nonumber \\
		& \Rightarrow & \gamma(t_n u_n)=\int_\Omega f(z,t_n u_n)(t_n u_n)dz\ \mbox{for all}\ n\geq n_2. \label{eq31}
	\end{eqnarray}
	
	From hypothesis $H(f)_1(ii)$ and (\ref{eq30}), we have
	\begin{eqnarray}
		&	& \tau(z,t_n u_n)\leq\tau(z,u_n)+e(z)\ \mbox{for almost all}\ z\in\Omega,\ \mbox{all}\ n\geq n_2, \nonumber \\
		&\Rightarrow & \int_\Omega\tau(z,t_n u_n)dz\leq\int_\Omega\tau(z,u_n)dz + ||e||_1\ \mbox{for all}\ n\geq n_2 \nonumber \\
		&\Rightarrow & \int_\Omega f(z,t_n u_n)(t_n u_n)dz\leq c_6 + \int_\Omega 2F(z,t_n u_n)dz \label{eq32} \\
		&	& \mbox{for some $c_6>0$, all $n\geq n_2$ (see (\ref{eq14}))}.\nonumber
	\end{eqnarray}
	
	We return to (\ref{eq31}) and use (\ref{eq32}). Then
	\begin{equation}\label{eq33}
		2\varphi(t_n u_n)\leq c_6\ \mbox{for all}\ n\geq n_2.
	\end{equation}
	
	Comparing (\ref{eq28}) and (\ref{eq33}) we get a contradiction. This proves the Claim.
	
	On account of the Claim, we may assume that
	\begin{equation}\label{eq34}
		u_n\xrightarrow{w}u\ \mbox{in}\ H^1(\Omega)\ \mbox{and}\ u_n\rightarrow u\ \mbox{in}\ L^{\frac{2s}{s-1}}\ \mbox{and in}\ L^2(\partial\Omega).
	\end{equation}
	
	In (\ref{eq11}) we choose $h=u_n-u\in H^1(\Omega)$, pass to the limit as $n\rightarrow\infty$ and use (\ref{eq34}). Then
	\begin{eqnarray*}
		&	& \lim\limits_{n\rightarrow\infty}\langle A(u_n),u_n-u\rangle=0, \\
		& \Rightarrow & ||Du_n||_2\rightarrow||Du||_2 \\
		& \Rightarrow & u_n\rightarrow u\ \mbox{in}\ H^1(\Omega) \\
		&	& \mbox{(by the Kadec-Klee property, see (\ref{eq34}) and  \cite[p. 901]{6})}.
	\end{eqnarray*}
	
	This proves that $\varphi$ satisfies the $C$-condition.
\end{proof}

We assume that $K_\varphi$ is finite (otherwise we already have an infinity of nontrivial solutions for problem (\ref{eq1})). Then the finiteness of $K_\varphi$ and Proposition \ref{prop5} permit the computation of the critical groups of $\varphi$ at infinity.
\begin{prop}\label{prop6}
	If hypotheses $H(\xi),H(\beta),H(f)_1,(i),(ii)$ hold, then $C_k(\varphi,\infty)=0$ for all $k\in \NN$.
\end{prop}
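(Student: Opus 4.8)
The plan is to reduce everything to a contractibility statement about a very low sublevel set. By definition $C_k(\varphi,\infty)=H_k(H^1(\Omega),\varphi^c)$ for any $c<\inf\varphi(K_\varphi)$, and since $K_\varphi$ is assumed finite this infimum is finite. Now $H^1(\Omega)$, being a Hilbert space, is contractible, and $H^1(\Omega)\backslash\{0\}$ is contractible as well (the unit sphere of an infinite dimensional normed space is contractible, and $u\mapsto u/\|u\|$ retracts $H^1(\Omega)\backslash\{0\}$ onto it). So it suffices to produce a level $c<0$ for which $\varphi^c$ is a deformation retract of $H^1(\Omega)\backslash\{0\}$: then $\varphi^c$ is contractible, and the long exact sequence of the pair $(H^1(\Omega),\varphi^c)$ with both spaces contractible forces $H_k(H^1(\Omega),\varphi^c)=0$ for all $k\geq0$, hence in particular for all $k\in\NN$. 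Since $\varphi(0)=0$, we have $0\notin\varphi^c$ for $c<0$, which is what permits us to work on $H^1(\Omega)\backslash\{0\}$.

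The first ingredient is that $\varphi$ is unbounded below along every ray. Fixing $u\neq0$ and writing $\frac{\varphi(tu)}{t^2}=\frac{1}{2}\gamma(u)-\int_\Omega\frac{F(z,tu)}{(tu)^2}\,u^2\,dz$, the growth condition $\lim_{x\rightarrow\pm\infty}F(z,x)/x^2=+\infty$ furnished by $H(f)_1(ii)$, together with Fatou's lemma (the integrand is bounded below by an integrable function, using the bound $a_\rho$ from $H(f)_1(i)$ on the set where $u$ is small), gives $\varphi(tu)\rightarrow-\infty$ as $t\rightarrow+\infty$. Thus every ray meets every sublevel set.

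The decisive ingredient is a transversality estimate on low sublevel sets. Starting from the identity $\langle\varphi'(u),u\rangle=2\varphi(u)-\int_\Omega\tau(z,u)\,dz$, which is immediate from the definitions of $\varphi$ and $\tau$, I would note that the quasimonotonicity in $H(f)_1(ii)$, evaluated at the endpoint $x=0$ (where $\tau(z,0)=0$), yields $\tau(z,y)\geq-e(z)$ for almost all $z$ and all $y\in\RR$, whence $\int_\Omega\tau(z,u)\,dz\geq-\|e\|_1$ and so $\langle\varphi'(u),u\rangle\leq 2\varphi(u)+\|e\|_1$. Choosing $c<\min\{-\frac{1}{2}\|e\|_1,\ \inf\varphi(K_\varphi)\}$ we obtain $\langle\varphi'(u),u\rangle<0$ whenever $\varphi(u)\leq c$. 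Geometrically this says $t\mapsto\varphi(tu)$ is strictly decreasing at any time it reaches level $c$, so $\{t>0:\varphi(tu)\leq c\}$ is a half-line $[t_0(u),\infty)$ with a unique entry time $t_0(u)>0$ (recall $\varphi(0)=0>c$); every crossing is downward, so $t_0$ is unambiguously defined on all of $H^1(\Omega)\backslash\{0\}$.

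Finally I would assemble the retraction. Because $\frac{d}{dt}\varphi(tu)\big|_{t=t_0(u)}\neq0$, the implicit function theorem makes $u\mapsto t_0(u)$ continuous, and it equals $1$ exactly on $\{\varphi=c\}$. Then $h(s,u)=\big(1-s+s\max\{t_0(u),1\}\big)u$ is a homotopy in $H^1(\Omega)\backslash\{0\}$ (its scalar factor is $\geq1$, hence never zero) with $h(0,\cdot)=\mathrm{id}$, $h(1,\cdot)$ mapping into $\varphi^c$, and $h(s,\cdot)=\mathrm{id}$ on $\varphi^c$; this is the required strong deformation retraction, and the conclusion $C_k(\varphi,\infty)=0$ follows as in the first paragraph. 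The main obstacle I anticipate is precisely the transversality step together with the well-definedness and continuity of the crossing-time map $t_0$: the superlinearity alone gives unboundedness below, but it is the lower bound $\tau(z,\cdot)\geq-e(\cdot)$ coming from $H(f)_1(ii)$ that makes the level sets transversal to rays and thereby turns the radial geometry into an honest deformation retraction.
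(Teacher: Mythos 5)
Your proposal is correct and follows essentially the same route as the paper's own proof: the key inequality $\langle\varphi'(u),u\rangle\leq 2\varphi(u)+\|e\|_1$ (obtained from $\tau(z,\cdot)\geq -e(\cdot)$, i.e. hypothesis $H(f)_1(ii)$ evaluated at $x=0$) makes low sublevel sets transversal to rays, the implicit function theorem gives a continuous crossing time, and the radial deformation retraction of $H^1(\Omega)\backslash\{0\}$ onto a low sublevel set, combined with contractibility of the punctured infinite-dimensional Hilbert space, forces the relative homology to vanish. The paper's argument differs only cosmetically: it parametrizes the crossing time on the unit sphere $\partial B_1$ and extends radially, derives ray-unboundedness from the quadratic lower bound $F(z,x)\geq\frac{\eta}{2}x^2-c_7$ rather than Fatou's lemma, and concludes by the homotopy equivalence of $\varphi^{\rho_0}$ with the contractible sphere $\partial B_1$ instead of your long-exact-sequence formulation.
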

\begin{proof}
	Hypotheses $H(f)_1 (i),(ii)$ imply that given any $\eta>0$, we can find $c_7=c_7(\eta)>0$ such that
	\begin{equation}\label{eq35}
		F(z,x) \geq\frac{\eta}{2}x^2-c_7\ \mbox{for almost all}\	z\in\Omega,\ \mbox{all}\ x\in\RR.
	\end{equation}
	Let $\partial B_1=\{u\in H^1(\Omega):||u||=1\}$. Then for $u\in \partial B_1$ and $t>0$, we have
	\begin{eqnarray}
		\varphi(tu) & = & \frac{t^2}{2}\gamma(u) - \int_\Omega F(z,tu)dz \nonumber \\
					& \leq & \frac{t^2}{2}\left[\gamma(u)-\eta||u||_2^2\right]+c_7|\Omega|_N\ \mbox{(see (\ref{eq35}))} \nonumber\\
					& \leq & \frac{t^2}{2}\left[c_8-\eta||u||_2^2\right]+c_7|\Omega|_N\ \mbox{for some}\ c_8>0 \label{eq36}
	\end{eqnarray}
	(see hypotheses $H(\xi),H(\beta)$).
	
	Recall that $\eta>0$ is arbitrary. So, we can choose $\eta>\frac{c_8}{||u||_2^2}$. Then it follows from (\ref{eq36}) that
	\begin{equation}\label{eq37}
		\varphi(tu) \rightarrow -\infty\ \mbox{as}\ t\rightarrow+\infty\ (u\in\partial B_1).
	\end{equation}
	
	For $u\in\partial B_1$ and $t>0$, we have
%	$$\frac{d}{dt}\varphi(tu)= \langle\varphi'(tu),u\rangle\ \mbox{(by the chain rule)}$$
	\begin{eqnarray}
\frac{d}{dt}\varphi(tu)&=& \langle\varphi'(tu),u\rangle\ \mbox{(by the chain rule)} \nonumber\\
			& = & \frac{1}{t}\langle\varphi'(tu),tu\rangle \nonumber\\
			& = & \frac{1}{t}\left[\gamma(tu)-\int_\Omega f(z,tu)(tu)dz \right] \nonumber\\
			& \leq & \frac{1}{t}\left[\gamma(tu)-\int_\Omega2F(z,tu)dz+||e||_1 \right]\ \mbox{see hypothesis}\ H(f)_1(ii) \nonumber\\
			& = & \frac{1}{t}\left[2\varphi(tu)+||e||_1 \right].\label{eq38}
	\end{eqnarray}
	
	From (\ref{eq37}) and (\ref{eq38}) we infer that
	$$\frac{d}{dt}\varphi(tu)< 0\ \mbox{for}\ t>0\ \mbox{big}.$$
	Invoking the implicit function theorem, we can find $\vartheta\in C(\partial B_1)$ such that
	\begin{equation}\label{eq39}
		\vartheta>0\ \mbox{and}\ \varphi(\vartheta(u)u)=\rho_0<-\frac{||e||_1}{2}.
	\end{equation}
	
	We extend $\vartheta$ on $H^1(\Omega)\backslash\{0\}$ by defining
	$$\hat\vartheta(u)=\frac{1}{||u||}\vartheta\left(\frac{u}{||u||}\right)\ \mbox{for all}\ u\in H^1(\Omega)\backslash\{0\}. $$
	
	We have that $\hat\vartheta\in C(H^1(\Omega)\backslash\{0\})$ and $\varphi(\hat\vartheta(u)u)=\rho_0$. Also
	\begin{equation}\label{eq40}
		\varphi(u)=\rho_0\Rightarrow\hat\vartheta(u)=1.
	\end{equation}
	
	Therefore, if we define
	\begin{equation}\label{eq41}
		\vartheta_0(u)=\left\{\begin{array}{ll}
			1 				  & \mbox{if}\ \varphi(u)\leq \rho_0 \\
			\hat\vartheta(u)  & \mbox{if}\ \rho_0<\varphi(u)
		\end{array}
		\right.
	\end{equation}	
	then $\vartheta_0\in C^1(H^1(\Omega)\backslash\{0\})\ \mbox{(see (\ref{eq40}))}$.
	
	Consider the deformation $h:[0,1]\times(H^1(\Omega)\backslash\{0\})\rightarrow H^1(\Omega)\backslash\{0\}$ defined by
	$$h(t,u) = (1-t)u+t\vartheta_0(u)u\ \mbox{for all}\ t\in[0,1]\ \mbox{all}\ u\in H^1(\Omega)\backslash\{0\}.$$
	
	We have
	\begin{eqnarray*}
		&&h(0,\cdot)=id|_{H^1(\Omega)\backslash\{0\}} \\
	 	&&h(1,u)\in\varphi^{\rho_0}\ \mbox{(see (\ref{eq41})\ and (\ref{eq40}))}\\
	 	&&h(t,\cdot)|_{\varphi^{\rho_0}} = id|_{\varphi^{\rho_0}}\ \mbox{(see (\ref{eq41}))}.
	\end{eqnarray*}
	
	These  properties imply that
	\begin{equation}\label{eq42}
	\varphi^{\rho_0}\ \mbox{is a strong deformation retract of}\ H^1(\Omega)\backslash\{0\}	.	
	\end{equation}
	
	Consider the map $r_0:H^1(\Omega)\backslash\{0\}\rightarrow \partial B_1$ defined by
	$$r_0(u)=\frac{u}{||u||}\ \mbox{for all}\ u\in H^1(\Omega)\backslash\{0\}.$$
	We see that
	\begin{eqnarray}
		&&r_0(\cdot)\ \mbox{is continous and}\  r_0|_{\partial B_1}=id|_{\partial B_1}, \nonumber\\
		&&\Rightarrow\partial B_1\ \mbox{is a retract of}\ H^1(\Omega)\backslash\{0\}. \label{eq43}
	\end{eqnarray}
	
	Also, if we consider the deformation $h_0:[0,1]\times(H^1(\Omega)\backslash\{0\})\rightarrow H^1(\Omega)\backslash\{0\}$ defined by
	$$h_0(t,u)=(1-t)u+tr_0(u)\ \mbox{for all}\ t\in[0,1],\ \mbox{all}\ u\in H^1(\Omega)\backslash\{0\}$$
	then we see that
	\begin{equation}\label{eq44}
		H^1(\Omega)\backslash\{0\} \ \mbox{is deformable into}\ \partial B_1.
	\end{equation}
	
	From (\ref{eq43}), (\ref{eq44}) and Theorem 6.5 of Dugundji \cite[p. 325]{4} it follows that
	\begin{equation}\label{eq45}
		\partial B_1\ \mbox{is a deformation retract of}\ H^1(\Omega)\backslash\{0\}.
	\end{equation}
	
	From (\ref{eq42}) and (\ref{eq45}) we infer that
	\begin{eqnarray}
		\varphi^{\rho_0}\ \mbox{and}\ \partial B_1\ \mbox{are homotopy equivalent,} \nonumber\\
		\Rightarrow H_k(H^1(\Omega),\varphi^{\rho_0}) = H_k(H^1(\Omega),\partial B_1)\ \mbox{for all}\ k\in\NN_0\label{eq46}
	\end{eqnarray}
	(see Motreanu, Motreanu and Papageorgiou \cite[p. 143]{15}).
	
	Since $\partial B_1$ is the unit sphere of the infinite dimensional Hilbert space $H^1(\Omega)$ then it is contractible (see Gasinski and Papageorgiou \cite[Problems 4.154 and 4.159]{7}). Therefore
	\begin{equation}\label{eq47}
		H_k(H^1(\Omega),\partial B_1) = 0\ \mbox{for all}\ k\in\NN_0	
	\end{equation}
	(see Motreanu, Motreanu and Papageorgiou \cite[p. 147]{15}).
	
	From (\ref{eq46}) and (\ref{eq47}) it follows that
	\begin{equation}\label{eq48}
		H_k(H^1(\Omega),\varphi^{\rho_0}) = 0\ \mbox{for all}\ k\in\NN_0.	
	\end{equation}
	
	Taking $\rho_0$ more negative if necessary (see (\ref{eq39})), we have
	\begin{eqnarray*}
		&&H_k(H^1(\Omega),\varphi^{\rho_0}) = C_k(\varphi,\infty) \ \mbox{for all}\ k\in\NN_0, \\
		&\Rightarrow & C_k(\varphi,\infty ) = 0\ \mbox{for all}\ k\in\NN_0.
	\end{eqnarray*}
\end{proof}

We also compute the critical groups of $\varphi$ at the origin. Recall that we have the orthogonal direct sum decomposition
$$H^1(\Omega)=H_-\oplus E(0)\oplus H_+$$
with $H_-=\underset{\text{k=1}}{\overset{\text{m\_}}{\oplus}}E(\hat{\lambda}_k),H_+=\overline{\mathop{\oplus}\limits_{k\geq m_+}E(\hat{\lambda}_k)}$ (see Section \ref{sec2}). We set
$$d_-={\rm dim}\,H_-\ \mbox{and}\ d^0_-={\rm dim}\, (H_-\oplus E(0)).\ \mbox{Note that}$$
\begin{itemize}
	\item [$\bullet$] $d_-=0$ if $\hat{\lambda}_k\geq0$ for all $k\in\NN_0$ (that is, $H_-=\{0\}$).
	\item [$\bullet$] $d^0_-=0$ if $\hat{\lambda}_k>0$ for all $k\in\NN_0$ (that is, $H_-\oplus E(0)=\{0\}$).
\end{itemize}
\begin{prop}\label{prop7}
	If hypotheses $H(\xi)$, $H(\beta)$, $H(f)_1$ hold, then $C_{d_{-}}(\varphi, 0)\neq0$ or $C_{d_{-}^0}(\varphi, 0)\neq0$.
\end{prop}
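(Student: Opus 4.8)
The plan is to compute the critical groups of $\varphi$ at the isolated critical point $0$ (note that $H(f)_1(iii)$ forces $f(z,0)=0$, so $0\in K_\varphi$, and by the standing assumption $K_\varphi$ is finite, hence $0$ is isolated) by exhibiting a local linking of $\varphi$ at the origin with respect to a suitable splitting of $H^1(\Omega)$. I would work throughout with the orthogonal decomposition $H^1(\Omega)=H_-\oplus E(0)\oplus H_+$, using that $\gamma$ is block-diagonal for it, that $\gamma(\bar u)\le\hat\lambda_{m_-}||\bar u||_2^2<0$ for $0\neq\bar u\in H_-$, that $\gamma\equiv0$ on $E(0)$, and that $\gamma(\hat u)\ge\hat\lambda_{m_+}||\hat u||_2^2$ for $\hat u\in H_+$. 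On $H_+$ I would first upgrade this to a genuine $H^1$-coercivity estimate $\gamma(\hat u)\ge c_9||\hat u||^2$ (some $c_9>0$): this follows by bounding $\gamma(\hat u)$ below by a convex combination of $\hat\lambda_{m_+}||\hat u||_2^2$ and the bound $c_0||\hat u||^2-\mu||\hat u||_2^2$ coming from (\ref{eq3}), the weight being chosen so that the coefficient of $||\hat u||_2^2$ is nonnegative.

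Next I would record the crucial consequence of $H(f)_1(i),(iii)$: combining the sublinearity at the origin with the almost critical growth yields, for every $\varepsilon>0$, a constant $c_\varepsilon>0$ with $|F(z,x)|\le\frac\varepsilon2|x|^2+c_\varepsilon|x|^{2^*}$ for almost all $z\in\Omega$ and all $x\in\RR$. Using the continuous embedding $H^1(\Omega)\hookrightarrow L^{2^*}(\Omega)$, this gives, for $\hat u\in H_+$ with $||\hat u||$ small, $\varphi(\hat u)\ge\frac12 c_9||\hat u||^2-\frac\varepsilon2 c||\hat u||^2-c_\varepsilon c'||\hat u||^{2^*}>0$ once $\varepsilon$ is small; thus $0$ is a strict local minimizer of $\varphi|_{H_+}$. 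With the positive direction $H_+$ fixed, the two halves of $H(f)_1(iii)$ single out the two conclusions. In case $[b]$ ($F\ge0$ near $0$) I put $Y=H_-\oplus E(0)$, so $\dim Y=d_-^0$, and $V=H_+$. For $u=\bar u+u^0\in Y$ with $||u||$ small one has $|u(z)|\le\delta$ for all $z$ (both summands live in finite-dimensional spaces, where the $L^\infty$- and $H^1$-norms are equivalent), whence $F(z,u)\ge0$ and $\varphi(u)=\frac12\gamma(\bar u)-\int_\Omega F(z,u)\,dz\le0$. Together with $\varphi|_{V\setminus\{0\}}>0$ this is a local linking with $\dim Y=d_-^0$, and the standard computation of critical groups under local linking (see Motreanu, Motreanu and Papageorgiou \cite{15}) yields $C_{d_-^0}(\varphi,0)\neq0$.

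In case $[a]$ ($F\le0$ near $0$) I take $Y=H_-$ ($\dim Y=d_-$) and $V=E(0)\oplus H_+$. On $H_-$ the estimate $|F(z,x)|\le\frac\varepsilon2|x|^2+c_\varepsilon|x|^{2^*}$ and the negative definiteness of $\gamma$ give $\varphi(\bar u)\le-\frac{\hat c}2||\bar u||^2+\frac\varepsilon2 c||\bar u||^2+c_\varepsilon c'||\bar u||^{2^*}<0$ for $0\neq\bar u\in H_-$ small. The delicate point, and the main obstacle, is the reverse inequality $\varphi\ge0$ on $V$: on the degenerate subspace $E(0)$ the quadratic form vanishes, so the naive bound $\int_\Omega F\le C||u||^{2^*}$, which mixes the $E(0)$- and $H_+$-components, is too lossy to be absorbed by $\frac12\gamma(\hat u)$, the quadratic form of the $H_+$-component alone. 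I would resolve this by exploiting the pointwise sign condition together with a domain splitting. Writing $u=u^0+\hat u$ and restricting $||u^0||$ so small that $|u^0(z)|\le\delta/2$ for all $z$, I split $\Omega$ into $\{|\hat u|\le\delta/2\}$, where $|u|\le\delta$ forces $F(z,u)\le0$, and $\{|\hat u|>\delta/2\}$, where $|u|\le2|\hat u|$ lets me bound $F(z,u)$ by $2\varepsilon\,\hat u^2+2^{2^*}c_\varepsilon|\hat u|^{2^*}$. The resulting estimate $\int_\Omega F(z,u)\,dz\le 2\varepsilon c||\hat u||^2+2^{2^*}c_\varepsilon c'||\hat u||^{2^*}$ is now expressed purely in $||\hat u||$ and, for $\varepsilon$ and $||\hat u||$ small, is $\le\frac12 c_9||\hat u||^2\le\frac12\gamma(\hat u)$, so that $\varphi(u)\ge0$. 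This establishes the local linking with $\dim Y=d_-$ and hence $C_{d_-}(\varphi,0)\neq0$. Since $H(f)_1(iii)$ always places us in case $[a]$ or case $[b]$, the two possibilities in the conclusion exhaust all situations, and the proof is complete.
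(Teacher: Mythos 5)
Your proposal is correct and follows essentially the same route as the paper's own proof: the same case distinction from hypothesis $H(f)_1(iii)$, the same splittings $H_-\oplus[E(0)\oplus H_+]$ in case [a] and $[H_-\oplus E(0)]\oplus H_+$ in case [b], the same domain-splitting trick $\Omega_\delta=\{|\hat{u}|\le\delta/2\}$ to neutralize the degenerate component $E(0)$, and the same local-linking criterion from Motreanu, Motreanu and Papageorgiou to conclude $C_{d_-}(\varphi,0)\neq0$ or $C_{d_-^0}(\varphi,0)\neq0$. The only point to note is that in case [a] you establish $\varphi\ge0$ (not $>0$) on $E(0)\oplus H_+$ near the origin, whereas the cited local-linking result is invoked with strict positivity off the origin; since the paper's own estimate (\ref{eq57}) likewise yields only $\varphi(u)\ge0$ when $\hat{u}=0$, this is the same imprecision as in the paper and not a gap specific to your argument.
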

\begin{proof}
	First we assume that hypothesis $H(f)_1(iii)$ [a] holds.
	
	Hypotheses $H(f)_1$ imply that given $\varepsilon>0$, we can find $c_9=c_9(\varepsilon)>0$ such that
	\begin{equation}\label{eq49}
		|F(z,x)|\leq \frac{\varepsilon}{2}x^2+c_9|x|^{2^*}\ \mbox{for almost all}\ z\in\RR\ \mbox{all}\ x\in\RR
	\end{equation}
	(if $N=1,2$, then we replace $2^*$ by $r>2$). For $u\in\ H_-$ we have
	\begin{eqnarray}
		\varphi(u) & = & \frac{1}{2}\gamma(u) - \int_\Omega F(z,u)dz\nonumber \\
				   & \leq & \frac{\hat{\lambda}_{m_-}+\varepsilon}{2}||u||_2^2 + c_9||u||_{2^*}^{2^*}\ \mbox{(see (\ref{eq49}) and recall that}\ u\in H_-).\label{eq50}	
	\end{eqnarray}
	
	Recall that $\hat{\lambda}_{m_-}<0$. So, if we choose $\varepsilon\in(0,-\hat{\lambda}_{m_-})$ and exploit the fact that since $H_-$ is finite dimensional all norms are equivalent, then from (\ref{eq50}) we have
	\begin{equation}\label{eq51}
		\varphi(u) \leq -c_{10}|| u||^2 + c_{11}|| u||^{2^*}\ \mbox{for some}\ c_{10}, c_{11}>0,\ \mbox{all}\ u\in H_-.
	\end{equation}
	
	Since $2<2^*$, we see from (\ref{eq51}) that we can find $\rho_1\in(0,1)$ small such that
	\begin{equation}\label{eq52}
		u\in H_-,\ || u|| \leq \rho_1 \Rightarrow \varphi(u)\leq0.
	\end{equation}
	
	Recall that $E(0)$ is finite dimensional. So, all norms are equivalent and we can find $\rho_0>0$ such that
	\begin{equation}\label{eq53}
		u\in E_0,\ || u|| \leq \rho_0 \Rightarrow |u(z)| \leq \delta/_2\ \mbox{for almost all}\ z\in\Omega\,.
	\end{equation}
	Here, $\delta>0$ is as postulated by hypothesis $H(f)_1(iii)$.
	
	Let $u\in E(0)\oplus H_+$. Then $u$ admits a unique sum decomposition
	$$u = u^0+\hat{u}\ \mbox{with}\ u^0\in E(0),\ \hat{u}\in H_+.$$
	
	Note that
	\begin{equation}\label{eq54}
		||u|| \leq \rho_0 \Rightarrow ||u^0|| \leq \rho_0,
	\end{equation}
	since $u^0$ is the orthogonal projection of $u$ on $E(0)$ and the orthogonal projection operator has operator norm equal to 1.
	
	We define $\Omega_{\delta}=\left\{z\in\Omega : |\hat{u}(z)| \leq \frac{\delta}{2} \right\}.$ Then for $u\in E(0)\oplus H_+$ with $|| u|| \leq \rho_0$, we have
	\begin{eqnarray}
		& & |u(z)| \leq|u^0(z)| + |\hat{u}(z)| \leq \frac{\delta}{2}+\frac{\delta}{2}=\delta\ \mbox{for almost all}\ z\in\Omega_\delta \nonumber \\
		& & \mbox{(see (\ref{eq53}), (\ref{eq54})),} \nonumber \\
		& \Rightarrow & \int_{\Omega_\delta}F(z, u(z))dz \leq 0\ \mbox{(see hypothesis $H(f)_1(iii)[a]$)}. \label{eq55}
	\end{eqnarray}
	
	Also, for $u\in E(0)\oplus H_+$ with $|| u||\leq\rho_0$, we have
	\begin{eqnarray}
		| u(z)| \leq | u^0(z)| + |\hat{u}(z)| \leq 2|\hat{u}(z)|\ \mbox{for almost all}\ z\in\Omega \backslash\Omega_\delta \label{eq56} \\
		\mbox{(see (\ref{eq53}), (\ref{eq54}))}. \nonumber
	\end{eqnarray}
	
	So, for $u\in E(0)\oplus H_+$ with $|| u|| \leq \rho_0$, exploiting the orthogonality of the component spaces, we have
	\begin{eqnarray}
		\varphi(u) & = & \frac{1}{2}\gamma(u) - \int_{\Omega}F(z,u)dz \nonumber \\
		& \geqslant & \frac{1}{2	}\gamma(\hat{u}) - \int_{\Omega\backslash\Omega_\delta} F(z,u)dz\ \mbox{(see (\ref{eq55}) and recall that $u^0\in E(0)$)} \nonumber \\
		& \geqslant & \frac{1}{2	}\gamma(\hat{u}) - \varepsilon|| \hat{u}||_{2}^{2} - c_{12}||\hat{u}||^{2^*}\ \mbox{for some $c_{12}>0$ (see (\ref{eq49}), (\ref{eq56}))} \nonumber \\
		& \geqslant & c_{13}||\hat{u}||^2 - c_{12}||\hat{u}||^{2^*}\ \mbox{for some $c_{13}>0$, choosing $\varepsilon >0$ small} \label{eq57} \\
		& & \mbox{(recall that $\hat{u}\in H_+$)}. \nonumber
	\end{eqnarray}
	
	Since $2<2^*$, choosing $\rho_2\in(0,\rho_0]$ small, from (\ref{eq57}) we have
	\begin{equation}\label{eq58}
		\varphi(u)>0 = \varphi(0)\ \mbox{for all $u\in E(0)\oplus H_+$, $0<|| u|| \leq \rho_2$.}
	\end{equation}
	
	Then (\ref{eq52}) and (\ref{eq58}) imply that
	\begin{equation*}
		\left\{
			\begin{array}{ll}
				\varphi \ \mbox{has a local linking at}\ u=0 \\
				\mbox{(for the decomposition $H_-\oplus[E(0)\oplus H_+]$)} \\
				u=0\ \mbox{is a strict minimizer of}\ \varphi\vert_{E(0)\oplus H_+}
			\end{array}
		\right\}
	\end{equation*}
	
	Then by Motreanu, Motreanu and Papageorgiou \cite[pp. 169, 171]{15}, we have
	$$C_{d_-}(\varphi,0)\neq0.$$
	
	Now assume that hypothesis $H(f)_1(iii)$ [b] holds. We consider the following orthogonal direct sum decomposition
	$$H^1(\Omega)=Z\oplus H_+\ \mbox{with}\ Z=H_-\oplus E(0).$$
	
	Then for $u\in H_+$ we have
	\begin{eqnarray*}
		\varphi(u) & = & \frac{1}{2}\gamma(u) - \int_\Omega F(z,u)dz \\
		& \geqslant & \frac{1}{2}\left[\gamma(u) - \varepsilon ||u||^2_2\right]-c_{14}||u||^{2^*}\ \mbox{for some $c_{14}>0$} \\
		& & \mbox{(see (\ref{eq49}))}.
	\end{eqnarray*}
	
	Choosing $\varepsilon>0$ small, we have
	\begin{eqnarray*}
		\varphi(u)\geqslant c_{15} || u||^2 - c_{14}|| u||^{2^*}\ \mbox{for some $c_{15}>0$} \\
		\mbox{(recall that $u\in H_+$).}
	\end{eqnarray*}
	
	Since $2<2^*$, we can find $\rho_1\in(0,1)$ small such that
	\begin{equation}\label{eq59}
		u\in H_+,\ 0<|| u|| \leq \rho_1 \Rightarrow \varphi(0) = 0<\varphi(u).
	\end{equation}
	
	Now suppose that $u\in Z=H_-\oplus E(0)$. The space Z is finite dimensional and so all norms are equivalent. Therefore we can find $\rho_2>0$ such that
	\begin{equation}\label{eq60}
		u\in Z,\ || u|| \leq \rho_2 \Rightarrow | u(z)| \leq \delta\ \mbox{for almost all}\ z\in\Omega\,.
	\end{equation}
	Here, $\delta>0$ is as postulated in hypothesis $H(f)(iii)$. Every $u\in Z$ can be written in a unique way as
	$$u=\overline{u}+u^0\ \mbox{with}\ \overline{u}\in H_-,\ u^0\in E(0).$$
	
	Exploiting the orthogonality of the component spaces, for $u\in Z$ with $|| u|| \leq \rho_2$, we have
	\begin{eqnarray*}
		\varphi(u) & = & \frac{1}{2}\gamma(u) - \int_\Omega F(z,u)dz \\
				   & = & \frac{1}{2}\gamma(\overline{u}) - \int_\Omega F(z,u)dz\ \mbox{(since $u^0\in E(0)$)} \\
				   & \leq & \frac{\hat{\lambda}_{m_-}}{2}|| u||_{2}^{2}\ \mbox{(see (\ref{eq60}) and use hypothesis $H(f)_1(iii)$ [b]).}
	\end{eqnarray*}
	
	Since $\hat{\lambda}_{m_-}<0$, it follows that
	\begin{equation}\label{eq61}
		\varphi(u)\leq0\ \mbox{for all}\ u\in Z=\overline{H}\oplus E(0)\ \mbox{with}\ || u|| \leq \rho_2.
	\end{equation}
	
	Then (\ref{eq59}) and (\ref{eq61}) imply that
	\begin{eqnarray*}
		\left \{
			\begin{array}{l}
				\varphi\ \mbox{has a local linking at}\ u=0 \\
				\mbox{(now for the decomposition $Z\oplus H_+$),} \\
				0\ \mbox{is as strict local minimizer of $\varphi| H_+$}
			\end{array}
		\right \}
	\end{eqnarray*}
	 As before, by Motreanu, Motreanu and Papageorgiou \cite[pp. 169, 171]{15}, we have
	 \begin{equation*}
	 	C_{d_-^0}(\varphi,0)\neq0\ \mbox{(recall $d_-^0={\rm dim}\, Z$).}
	 \end{equation*}
\end{proof}

Now we are ready for our first existence theorem.
\begin{theorem}\label{th8}
	If hypotheses $H(\xi),H(\beta),H(f)_1$ hold, then problem \eqref{eq1} admits a nontrivial solution $\tilde{u}\in C^1(\overline{\Omega})$.
\end{theorem}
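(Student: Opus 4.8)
The plan is to run a Morse-theoretic argument by contradiction, exploiting the discrepancy between the critical groups of $\varphi$ at the origin and at infinity that were computed in Propositions \ref{prop6} and \ref{prop7}. First I would record that $u=0$ is always a critical point of $\varphi$: hypothesis $H(f)_1(iii)$ forces $f(z,0)=0$ (it follows from $\lim_{x\to0}f(z,x)/x=0$), hence $\varphi'(0)=0$ and $\varphi(0)=0$. If $K_\varphi$ is infinite, then problem \eqref{eq1} already possesses infinitely many nontrivial solutions and we are done; so I would assume $K_\varphi$ is finite, which is precisely the standing hypothesis under which the critical groups at infinity were computed.

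Then I would argue by contradiction and suppose $K_\varphi=\{0\}$. The key structural fact is that when the origin is the \emph{only} critical point of a $C^1$-functional satisfying the C-condition, its critical groups at $0$ coincide with its critical groups at infinity, namely $C_k(\varphi,0)=C_k(\varphi,\infty)$ for all $k\in\NN_0$. I would justify this through the long exact homology sequence of the triple $(H^1(\Omega),\varphi^b,\varphi^a)$ with $a<0<b$: since $0$ is the only critical point, no critical value lies in $(b,+\infty)$, so by the deformation theorem (together with the C-condition established in Proposition \ref{prop5}) $\varphi^b$ is a strong deformation retract of $H^1(\Omega)$, whence $H_k(H^1(\Omega),\varphi^b)=0$ for all $k$. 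The exact sequence of the triple then collapses to $C_k(\varphi,0)=H_k(\varphi^b,\varphi^a)\cong H_k(H^1(\Omega),\varphi^a)=C_k(\varphi,\infty)$.

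Combining this identity with Proposition \ref{prop6}, the assumption $K_\varphi=\{0\}$ would force $C_k(\varphi,0)=C_k(\varphi,\infty)=0$ for all $k\in\NN_0$. This contradicts Proposition \ref{prop7}, which guarantees $C_{d_-}(\varphi,0)\neq0$ or $C_{d_-^0}(\varphi,0)\neq0$. Therefore $K_\varphi\neq\{0\}$, so there exists $\tilde{u}\in K_\varphi$ with $\tilde{u}\neq0$; testing $\varphi'(\tilde u)=0$ against arbitrary $h\in H^1(\Omega)$ and using the Green identity for the Robin problem shows that $\tilde{u}$ is a nontrivial weak solution of \eqref{eq1}.

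It remains to upgrade the regularity to $\tilde{u}\in C^1(\overline{\Omega})$, and this is the step where I expect the main technical obstacle. The difficulty is that the reaction has almost critical growth $H(f)_1(i)$, so boundedness of the weak solution is not automatic and the embedding $H^1(\Omega)\hookrightarrow L^{2^*}(\Omega)$ is not compact. I would first show $\tilde{u}\in L^\infty(\Omega)$ by a Brezis--Kato / Moser iteration adapted to the exponent $2^*$, this being exactly the point where the limiting condition $\lim_{x\to\pm\infty}f(z,x)/(|x|^{2^*-2}x)=0$ is exploited to absorb the critical term in the iteration. Once $\tilde{u}\in L^\infty(\Omega)$, hypothesis $H(f)_1(i)$ gives $N_f(\tilde u)\in L^\infty(\Omega)$, and the linear Calder\'on--Zygmund theory together with the boundary (Schauder/Lieberman-type) regularity for the Robin condition yields $\tilde{u}\in C^{1,\alpha}(\overline{\Omega})\subseteq C^1(\overline{\Omega})$, completing the proof.
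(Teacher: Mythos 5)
Your proposal is correct and takes essentially the same route as the paper: the paper combines Propositions \ref{prop6} and \ref{prop7} with Corollary 6.92 of Motreanu--Motreanu--Papageorgiou \cite{15} to produce $\tilde{u}\in K_\varphi\backslash\{0\}$, and that corollary's proof is precisely your long-exact-sequence argument showing $C_k(\varphi,0)=C_k(\varphi,\infty)$ when $K_\varphi=\{0\}$. Your regularity step also matches the paper's: a Brezis--Kato type result (Lemma 5.1 of Wang \cite{28}, applied after splitting $f(z,\tilde{u})=a(z)\tilde{u}+b(z)$ with $a\in L^{N/2}(\Omega)$, $b\in L^\infty(\Omega)$) gives $\tilde{u}\in L^\infty(\Omega)$, and then the Calder\'on--Zygmund estimates (Lemma 5.2 of Wang \cite{28}) give $\tilde{u}\in W^{2,s}(\Omega)\subseteq C^{1,\alpha}(\overline{\Omega})$.
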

\begin{proof}
	From Proposition \ref{prop6}, we have that
	\begin{equation}\label{eq62}
		C_k(\varphi,\infty)=0\ \mbox{for all}\ k\in\NN_0.
	\end{equation}
	
	Also, Proposition \ref{prop7} says that
	\begin{equation}\label{eq63}
		C_{d_-}(\varphi,0)\neq0\ \mbox{or}\ C_{d_-^0}(\varphi,0)\neq0.	
	\end{equation}
	
	Then (\ref{eq62}), (\ref{eq63}) and Corollary 6.92 of Motreanu, Motreanu and Papageorgiou \cite[p. 173]{15} imply that we can find $\tilde{u}\in K_\varphi\backslash\{0\}$. From Papageorgiou and R\u adulescu \cite{19}, we have
	\begin{equation}\label{eq64}
		\left\{\begin{array}{l}
			-\Delta\tilde{u}(z)+\xi(z)\tilde{u}(z)=f(z,\tilde{u}(z))\ \mbox{for almost all}\ z\in\Omega \\
			\frac{\partial\tilde{u}}{\partial n}+\beta(z)\tilde{u}=0\ \mbox{on}\ \partial\Omega .
		\end{array}\right\}
	\end{equation}
	
	We define the following functions
	\begin{equation}\label{eq65}
		a(z)=\left\{\begin{array}{ccc}
			0		&	\mbox{if} & |\tilde{u}(z)|\leq1 \\
			\frac{f(z,\tilde{u}(z))}{\tilde{u}(z)} & \mbox{if} & 1<|\tilde{u}(z)|
		\end{array}\right.
		\mbox{and}\ b(z)=\left\{\begin{array}{ccc}
			f(z,\tilde{u}(z)) & \mbox{if} & |\tilde{u}(z)|\leq1 \\
			0		&	\mbox{if} & 1<|\tilde{u}(z)|	
		\end{array}
		\right.
	\end{equation}
	
	Hypotheses $H(f)_1$, imply that given $\varepsilon>0$, we can find $c_{16}=c_{16}(\varepsilon)>0$ such that
	\begin{equation}\label{eq66}
		|f(z,x)|\leq\varepsilon|x|^{2^*-1}+c_{16}|x|\ \mbox{for almost all}\ z\in\Omega,\ \mbox{all}\ x\in\RR.
	\end{equation}
	
	Then from (\ref{eq65}), (\ref{eq66}) and the Sobolev embedding theorem, we have
	$$a\in L^{\frac{N}{2}}(\Omega).$$
	
	Also, it is clear from (\ref{eq64}) and hypothesis $H(f)_1(i)$ that
	$$b\in L^\infty(\Omega).$$
	
	We rewrite (\ref{eq64}) as follows
	\begin{equation*}
		\left\{\begin{array}{l}
			-\Delta\tilde{u}(z)=(a(z)-\xi(z))\tilde{u}(z)+b(z)\ \mbox{for almost all}\ z\in\Omega, \\
			\frac{\partial\tilde{u}}{\partial n}+\beta(z)\tilde{u}=0\ \mbox{on}\ \partial\Omega.
		\end{array}\right\}		
	\end{equation*}
	
	Invoking Lemma 5.1 of Wang \cite{28}, we have
	$$\tilde{u}\in L^\infty(\Omega).$$
	
	Then hypotheses $H(\xi),H(f)_1(i)$ imply that
	$$f(\cdot,\tilde{u}(\cdot))-\xi(\cdot)\tilde{u}(\cdot)\in L^s(\Omega),\quad s>N.$$
	
	Invoking Lemma 5.2 of Wang \cite{28} (the Calderon-Zygmund estimates), we have
	\begin{eqnarray*}
		&		& \tilde{u}\in W^{2,s}(\Omega), \\
		& \Rightarrow & \tilde{u}\in C^{1,\alpha}(\overline{\Omega})\ \mbox{with}\ \alpha=1-\frac{N}{s}>0 \\
		&		& \mbox{(by the Sobolev embedding theorem).}
	\end{eqnarray*}
\end{proof}

In Theorem \ref{th8}, the reaction term $f(z,\cdot)$ is strictly sublinear near zero (see hypothesis $H(f)_1(iii)$). In the next existence theorem, we change the geometry near zero and assume that $f(z,\cdot)$ is linear near zero. In fact we permit double resonance with respect to any nonprincipal spectral interval.

The new hypotheses on the reaction term $f(z,x)$ are the following:

\smallskip
$H(f)_2:f:\Omega\times\RR\rightarrow\RR$ is a Carath\'eodory function such that hypotheses $H(f)_2(i),(ii)$ are the same as the corresponding hypotheses $H(f)_1(i),(ii)$ and\\
$(iii)$ there exist $m\in N,m\geq2$ and $\delta>0$ such that
$$\hat{\lambda}_m x^2\leq f(z,x)x\leq\hat{\lambda}_{m+1}x^2\ \mbox{for almost all}\ z\in\Omega,\ \mbox{all}\ |x|\leq\delta.$$
\begin{remark}
	The behaviour of $f(z,\cdot)$ near $\pm\infty$ remains the same. However, near zero, the growth of $f(z,\cdot)$ has changed. In fact the new condition for $f(z,\cdot)$ near zero implies linear growth. Also, permits resonance with respect to both endpoints of the nonprincipal spectral interval $[\hat{\lambda}_m,\hat{\lambda}_{m+1}]$, $m\geq2$ (double resonance). This means that the computation of the critical groups of $\varphi$ at the origin changes.
\end{remark}
\begin{prop}\label{prop9}
	If hypotheses $H(\xi),H(\beta),H(f)_2$ hold, then $C_k(\varphi,0)=\delta_{k,d_m}\ZZ$ for all $k\in\NN_0$ with $d_m=dim\underset{\text{i=1}}{\overset{\text{m}}{\oplus}}E(\hat{\lambda}_i)$.
\end{prop}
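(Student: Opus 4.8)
The plan is to compute $C_k(\varphi,0)$ by comparing $\varphi$ near the origin with a purely quadratic functional, whose critical groups are computable by hand, and then transferring the computation through the homotopy invariance of critical groups. Since we assume $K_\varphi$ is finite, $0$ is an isolated critical point of $\varphi$, so the groups $C_k(\varphi,0)$ are well defined. First I would integrate hypothesis $H(f)_2(iii)$: dividing by $x$ and integrating from $0$ to $x$ yields the two-sided bound
\[
\frac{\hat\lambda_m}{2}x^2\le F(z,x)\le\frac{\hat\lambda_{m+1}}{2}x^2\qquad\text{for a.a. }z\in\Omega,\ \text{all }|x|\le\delta .
\]
Then fix $\vartheta\in(\hat\lambda_m,\hat\lambda_{m+1})$ which is not an eigenvalue of \eqref{eq2}, and set $\psi(u)=\tfrac12\gamma(u)-\tfrac{\vartheta}{2}\|u\|_2^2$. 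With the orthogonal decomposition $H^1(\Omega)=\overline H_m\oplus\hat H_{m+1}$, where $\overline H_m=\bigoplus_{i=1}^mE(\hat\lambda_i)$ and $\hat H_{m+1}=\overline{\bigoplus_{i\ge m+1}E(\hat\lambda_i)}$, the characterizations \eqref{eq4}--\eqref{eq5} show that $u\mapsto\gamma(u)-\vartheta\|u\|_2^2$ is negative definite on the $d_m$-dimensional space $\overline H_m$ and positive definite on $\hat H_{m+1}$, with trivial kernel since $\vartheta$ is not an eigenvalue. Hence $0$ is the unique, nondegenerate critical point of the quadratic functional $\psi$, its negative space has dimension $d_m$, and therefore $C_k(\psi,0)=\delta_{k,d_m}\ZZ$ for all $k\in\NN_0$.

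I would then connect the two functionals by
\[
h(t,u)=\tfrac12\gamma(u)-(1-t)\int_\Omega F(z,u)\,dz-t\,\tfrac{\vartheta}{2}\|u\|_2^2,\qquad t\in[0,1],
\]
so that $h(0,\cdot)=\varphi$ and $h(1,\cdot)=\psi$. If one can produce $\rho>0$ with $K_{h(t,\cdot)}\cap\overline B_\rho=\{0\}$ for every $t\in[0,1]$ (uniform isolation of $0$ along the homotopy), then the homotopy invariance of critical groups (Motreanu, Motreanu and Papageorgiou \cite{15}) gives $C_k(\varphi,0)=C_k(\psi,0)=\delta_{k,d_m}\ZZ$, which is the assertion.

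The hard part is precisely this uniform isolation, and I expect it to be the main obstacle. I would argue by contradiction: if it fails, there are $t_n\in[0,1]$ and critical points $u_n\neq0$ of $h(t_n,\cdot)$ with $u_n\to0$ in $H^1(\Omega)$. Each $u_n$ solves a Robin problem with reaction $(1-t_n)f(z,u_n)+t_n\vartheta u_n$, so the regularity bootstrap of Theorem \ref{th8} (via Wang \cite{28}) upgrades the convergence to $u_n\to0$ in $C^1(\overline\Omega)$; hence $|u_n|\le\delta$ for large $n$ and the pointwise bounds of $H(f)_2(iii)$ apply. Normalizing $y_n=u_n/\|u_n\|$, the weights $\zeta_n=(1-t_n)\tfrac{f(\cdot,u_n)}{u_n}+t_n\vartheta$ stay bounded; passing to a subsequence (using the Kadec--Klee argument of Proposition \ref{prop5}) I may assume $y_n\to y$ in $H^1(\Omega)$ with $\|y\|=1$, $t_n\to t^*$, and $\zeta_n\xrightarrow{w^*}\zeta$ in $L^\infty(\Omega)$ with $a\le\zeta\le b$, where $a=(1-t^*)\hat\lambda_m+t^*\vartheta$ and $b=(1-t^*)\hat\lambda_{m+1}+t^*\vartheta$. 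Dividing the equation by $\|u_n\|$ and letting $n\to\infty$, $y$ solves $-\Delta y+\xi y=\zeta y$ with the Robin condition.

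Writing $y=\bar y+\hat y$ in $\overline H_m\oplus\hat H_{m+1}$ and testing this limiting equation with $\hat y-\bar y$, the sharp inequalities \eqref{eq4}--\eqref{eq5} give
\[
(\hat\lambda_{m+1}-b)\|\hat y\|_2^2\le(\hat\lambda_m-a)\|\bar y\|_2^2 .
\]
If $t^*>0$ then $\hat\lambda_m<a\le\zeta\le b<\hat\lambda_{m+1}$, so the left side is $\ge0$ and the right side is $\le0$, forcing $y=0$ and contradicting $\|y\|=1$. If $t^*=0$ the same test forces equality throughout, whence $\gamma(\bar y)=\hat\lambda_m\|\bar y\|_2^2$ and $\gamma(\hat y)=\hat\lambda_{m+1}\|\hat y\|_2^2$, so $\bar y\in E(\hat\lambda_m)$, $\hat y\in E(\hat\lambda_{m+1})$, and moreover $\zeta=\hat\lambda_m$ a.e. on $\{\bar y\neq0\}$, $\zeta=\hat\lambda_{m+1}$ a.e. on $\{\hat y\neq0\}$. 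The unique continuation property then prevents a nonzero eigenfunction from vanishing on a set of positive measure, so $\bar y$ and $\hat y$ cannot both be nonzero (that would force $\hat\lambda_m=\hat\lambda_{m+1}$ on a full-measure set), and the surviving single-component case is incompatible with the isolation of $0$ in the finite set $K_\varphi$. This resonant limit $t^*=0$, where the double-resonance degeneracy of the interval $[\hat\lambda_m,\hat\lambda_{m+1}]$ is actually felt, is the delicate point of the whole argument, and it is exactly there that the sharpness of \eqref{eq4}--\eqref{eq5}, the UCP, and the finiteness of $K_\varphi$ must be combined. This contradiction establishes the uniform isolation and completes the proof.
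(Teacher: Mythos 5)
Your proposal follows the paper's own strategy in outline: the same quadratic comparison functional $\Psi(u)=\frac{1}{2}\gamma(u)-\frac{\vartheta}{2}||u||_2^2$ with $\vartheta\in(\hat\lambda_m,\hat\lambda_{m+1})$ (automatically not an eigenvalue, these being consecutive eigenvalues), the same linear homotopy $h(t,\cdot)=(1-t)\varphi+t\Psi$, the same appeal to homotopy invariance of critical groups, and the same use of the Wang regularity bootstrap to place critical points of $h(t,\cdot)$ in $C^1(\overline{\Omega})$ so that the pointwise condition $H(f)_2(iii)$ becomes usable; your computation of $C_k(\Psi,0)$ via nondegeneracy and Morse index is a harmless variant of the paper's appeal to Su's Proposition 2.3. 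The genuine divergence is in how the uniform isolation of $0$ along the homotopy is verified: the paper does it by a direct pointwise computation at each fixed $t>0$, whereas you argue by contradiction through a normalized blow-up limit, and there your proof has a real gap.

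The gap is the endgame of your case $t^*=0$, exactly the point you flag as delicate. Having shown that the normalized limit $y$ is a nonzero eigenfunction in $E(\hat\lambda_m)$ or $E(\hat\lambda_{m+1})$, you assert that this is ``incompatible with the isolation of $0$ in the finite set $K_\varphi$''. It is not: when $t_n>0$ the points $u_n$ are critical points of $h(t_n,\cdot)$, \emph{not} of $\varphi$, so the finiteness of $K_\varphi$ gives no information about them; and the limiting problem $-\Delta y+\xi(z)y=\zeta y$ with $\zeta\equiv\hat\lambda_{m+1}$ (resp. $\hat\lambda_m$) is just the resonant eigenvalue problem, which is perfectly consistent, so no contradiction can be extracted from the limit objects alone. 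What actually rules out such $u_n$ is strict-gap information that your normalize-and-pass-to-the-limit procedure destroys: for fixed $n$ the weight $\zeta_n$ lies in $[(1-t_n)\hat\lambda_m+t_n\vartheta,\,(1-t_n)\hat\lambda_{m+1}+t_n\vartheta]$, strictly inside the spectral gap, and your own two-sided test, applied at level $n$ rather than in the limit, gives (writing $u_n=\bar u_n+\hat u_n$ in $\overline H_m\oplus\hat H_{m+1}$ and using (\ref{eq5}) together with the $\gamma$- and $L^2$-orthogonality of the components)
\[
t_n(\hat\lambda_{m+1}-\vartheta)||\hat u_n||_2^2\ \leq\ \gamma(\hat u_n)-\int_\Omega\zeta_n\hat u_n^2\,dz\ =\ \gamma(\bar u_n)-\int_\Omega\zeta_n\bar u_n^2\,dz\ \leq\ -t_n(\vartheta-\hat\lambda_m)||\bar u_n||_2^2,
\]
which forces $u_n=0$ whenever $t_n>0$ and $||u_n||_{C^1(\overline{\Omega})}\leq\delta$ --- a contradiction with no limiting argument needed (this is in essence the paper's computation, there phrased as $\langle h_u'(t_n,u_n),\hat u_n-\bar u_n\rangle\geq t_n c_{17}||u_n||^2>0$). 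Once all $t_n>0$ are excluded this way, the only case left in your contradiction scheme is $t_n=0$ for infinitely many $n$, and only there is the finiteness of $K_\varphi$ legitimately invoked. With this replacement your argument closes; as written, the UCP-plus-finiteness reasoning in the $t^*=0$ case does not yield a contradiction, so the uniform isolation --- and hence the whole proof --- is not established.
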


\begin{proof}
	Let $\vartheta\in(\hat{\lambda}_m,\hat{\lambda}_{m+1})$ and consider the $C^2$-functional $\Psi: H^1(\Omega)\rightarrow\RR$ defined by
	$$\Psi(u) = \frac{1}{2}\gamma(u) - \frac{\vartheta}{2}|| u||_2^2\ \mbox{for all}\ u\in H^1(\Omega).$$
	
	In this case we consider the following orthogonal direct sum decomposition of the Hilbert space $H^1(\Omega)$:
	\begin{equation}\label{eq67}
		H^1(\Omega) = \overline{H}_m \oplus \hat{H}_m\ \mbox{with}\ \overline{H}_m = \mathop{\oplus}\limits_{i=1}^m E(\hat{\lambda}_i), \hat{H}_m = \hat{H}_m^\perp = \overline{\mathop{\oplus}\limits_{i\geqslant m+1} E(\hat{\lambda}_{i})}.
	\end{equation}
	
	The choice of $\vartheta$ and (\ref{eq5}) imply that
	$$\Psi|_{\overline{H}_m} \leq 0\ \mbox{and}\ \Psi|_{\hat{H}_m\setminus\{0\}} > 0.$$
	
	Then Proposition 2.3 of Su \cite{27} implies that
	\begin{equation}\label{eq68}
		C_k(\Psi,0) = \delta_{k,d_m}\ZZ\ \mbox{for all}\ k\in\NN_0.
	\end{equation}
	
	Consider the homotopy $h_*:[0,1]\times H^1(\Omega)\rightarrow\RR$ defined by
	$$h_*(t,u) = (1-t)\varphi(u) + t\Psi(u)\ \mbox{for all}\ t\in[0,1],\ \mbox{all}\ u\in H^1(\Omega).$$
	
	As in the proof of Theorem  \ref{th8}, using the regularity theorem of Wang \cite{28}, we have that
	$$K_{h_{*}(t,\cdot)} \subseteq C^1(\overline{\Omega})\ \mbox{for all}\ t\in[0,1].$$
	
	Let $t>0$ and suppose that $u\in C^1(\overline{\Omega})$ satisfies $0<|| u||_{C^1(\overline{\Omega})} \leq \delta$, with $\delta>0$ as postulated by hypothesis $H(f)_2(iii)$. Then
	\begin{equation}\label{eq69}
		\langle(h_*)'_u(t,u), v\rangle = (1-t)\langle\varphi'(u),v\rangle + t\langle \Psi'(u),v\rangle \ \mbox{for all}\ v\in H^1(\Omega).
	\end{equation}
	
	Using the orthogonal direct sum decomposition (\ref{eq67}), we can write $u$ in a unique way as
	$$u = \overline{u} + \hat{u}\ \mbox{with}\ \overline{u}\in\overline{H}_m,\ \hat{u}\in\hat{H}_m.$$
	
	Exploiting the orthogonality of the component spaces, we have
	$$\langle\varphi'(u),\hat{u}-\overline{u}\rangle = \gamma(\hat{u}) - \gamma(\overline{u}) - \int_\Omega f(z,u)(\hat{u}-\overline{u})dz.$$
	
	Recall the choice of $u\in C^1(\overline{\Omega})$ and use hypothesis $H(f)_2(iii)$. Then
	$$f(z,u(z))(\hat{u}-\overline{u})(z) \leq \hat{\lambda}_{m+1}\hat{u}(z)^2 - \hat{\lambda}_m\overline{u}(z)\ \mbox{for almost all}\ z\in\Omega.$$
	
	Therefore
	\begin{equation}\label{eq70}
		\langle\varphi'(u),\hat{u}-\overline{u}\rangle \geqslant \gamma(\hat{u}) - \hat{\lambda}_{m+1}|| \hat{u}||_2^2 - \left[ \gamma(\overline{u}) - \hat{\lambda}_m|| \overline{u}||_2^2 \right] \geqslant 0
	\end{equation}
	(see (\ref{eq5})).
	
	Also, again via the orthogonality of the component spaces, we have
	\begin{equation}\label{eq71}
		\langle\Psi'(u),\hat{u}-\overline{u}\rangle = \gamma(\hat{u}) - \vartheta|| \hat{u}||_2^2 - \left[ \gamma(\overline{u}) - \vartheta|| \overline{u}||_2^2 \right] \geqslant c_{17}|| u||^2
	\end{equation}
	for some $c_{17}>0$ (recall that $\partial\in(\hat{\lambda}_m,\hat{\lambda}_{m+1})$).
	
	Returning to (\ref{eq69}) and using $v = \hat{u}-\overline{u}\in H^1(\Omega)$ and relations (\ref{eq70}), (\ref{eq71}), we obtain
	$$\langle(h_*)_u'(t,u),\hat{u}-\overline{u}\rangle \geq t c_{17} || u||^2 > 0\ \mbox{for all}\ 0<t\leq 1.$$
	
	For $t=0$, we have $h_*(0,\cdot) = \varphi(\cdot)$ and $0\in K_\varphi$ is isolated (recall that we assumed that $K_\varphi$ is finite or otherwise we already have an infinity of nontrivial solutions). Therefore from the homotopy invariance property of critical groups (see Gasinski and Papageorgiou \cite[Theorem 5.125, p. 836]{7}), we have
	\begin{eqnarray}
		& & C_k(h_*(0,\cdot),0) = C_k(h_*(1,\cdot),0)\ \mbox{for all}\ k\in\NN_0, \nonumber \\
		& \Rightarrow & C_k(\varphi,0) = C_k(\Psi,0)\ \mbox{for all}\ k\in\NN_0, \nonumber \\
		& \Rightarrow & C_k(\varphi,0) = \delta_{k,d_m}\ZZ\ \mbox{for all}\ k\in\NN_0\ \mbox{(see (\ref{eq68})).} \nonumber
	\end{eqnarray}
\end{proof}

Using this Proposition and reasoning as in the proof of Theorem \ref{th8}, we obtain the following existence theorem.
\begin{theorem}\label{th10}
	If hypotheses $H(\xi),H(\beta),H(f_2)$ hold, then problem (\ref{eq1}) admits a nontrivial smooth solution
	$$\tilde{u}\in C^1(\overline{\Omega}).$$
\end{theorem}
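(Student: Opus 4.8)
The plan is to combine the Morse-theoretic data already assembled—the critical groups of $\varphi$ at infinity and at the origin—exactly as in the proof of Theorem \ref{th8}, but now feeding in the new local computation supplied by Proposition \ref{prop9}. First I would observe that hypotheses $H(f)_2(i),(ii)$ coincide with $H(f)_1(i),(ii)$, so Propositions \ref{prop5} and \ref{prop6} apply verbatim: the energy functional $\varphi$ still satisfies the C-condition and
$$C_k(\varphi,\infty)=0\quad\text{for all}\ k\in\NN_0.$$
Nothing in the behaviour near $\pm\infty$ has changed, so no part of that analysis needs to be redone.

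Next I would invoke Proposition \ref{prop9}, which yields $C_k(\varphi,0)=\delta_{k,d_m}\ZZ$ with $d_m=\dim\oplus_{i=1}^m E(\hat{\lambda}_i)$. The decisive point is that $d_m\geq1$: since $m\geq2$ and each eigenspace is nontrivial, in fact $d_m\geq m\geq2$, so $C_{d_m}(\varphi,0)=\ZZ\neq0$. We are therefore in precisely the configuration exploited in Theorem \ref{th8}: there is a degree $k=d_m$ at which the critical group at the origin is nonzero while the critical group at infinity vanishes.

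As before, I would assume $K_\varphi$ is finite (otherwise problem \eqref{eq1} already has infinitely many nontrivial solutions). The contrast $C_{d_m}(\varphi,0)\neq0=C_{d_m}(\varphi,\infty)$ then forces $K_\varphi\neq\{0\}$ via Corollary 6.92 of Motreanu, Motreanu and Papageorgiou \cite{15}, the same tool used in Theorem \ref{th8}: were $0$ the only critical point, one would have $C_k(\varphi,\infty)=C_k(\varphi,0)$ for all $k$, contradicting the two displays above at $k=d_m$. Hence there exists $\tilde{u}\in K_\varphi\setminus\{0\}$, a nontrivial weak solution of \eqref{eq1}.

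Finally, the regularity conclusion $\tilde{u}\in C^1(\overline{\Omega})$ is obtained by repeating the bootstrap at the end of the proof of Theorem \ref{th8}: split $f(z,\tilde u)/\tilde u$ into the functions $a\in L^{N/2}(\Omega)$ and $b\in L^\infty(\Omega)$ as in \eqref{eq65}, use the growth estimate \eqref{eq66}, deduce $\tilde u\in L^\infty(\Omega)$ from Wang's Lemma 5.1 \cite{28}, and then apply the Calder\'on--Zygmund estimates (Wang's Lemma 5.2) to $f(\cdot,\tilde u)-\xi\tilde u\in L^s(\Omega)$ to get $\tilde u\in W^{2,s}(\Omega)\hookrightarrow C^{1,\alpha}(\overline{\Omega})$. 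I expect no genuine obstacle here; the entire difficulty of this theorem has already been absorbed into Proposition \ref{prop9}, whose double-resonance computation replaces the local-linking argument of Proposition \ref{prop7}, and the only thing that must be checked—that the degree $d_m$ is strictly positive—is immediate from $m\geq2$.
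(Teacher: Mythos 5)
Your proposal is correct and follows essentially the same route as the paper, whose proof of Theorem \ref{th10} is exactly ``use Proposition \ref{prop9} and argue as in Theorem \ref{th8}'': C-condition and $C_k(\varphi,\infty)=0$ from Propositions \ref{prop5}, \ref{prop6} (valid since $H(f)_2(i),(ii)=H(f)_1(i),(ii)$), the mismatch with $C_{d_m}(\varphi,0)=\delta_{k,d_m}\ZZ$ via Corollary 6.92 of \cite{15}, and Wang's regularity bootstrap. One cosmetic remark: the positivity of $d_m$ is not actually ``decisive,'' since $C_k(\varphi,\infty)=0$ for \emph{all} $k\in\NN_0$, so the critical-group mismatch would occur at $k=d_m$ even if $d_m$ were $0$.
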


\section{Equations with Concave Terms}

In this section we examine what happens when the reaction function exhibits a concave term (that is, a term which is strictly superlinear near zero). So, the geometry is different from both cases considered in Section 3. To deal with this new problem, we introduce a parameter $\lambda>0$ in the concave term and for all $\lambda>0$ small we prove multiplicity results for the equation.

So, now the problem under consideration, is:
\begin{equation}
		\left\{\begin{array}{ll}
			-\Delta u(z)+\xi(z)u(z)=\lambda|u(z)|^{q-2}u(z)+f(z,u(z))&\mbox{in}\ \Omega,\\
			\frac{\partial u}{\partial n}+\beta(z)u=0\ \ \mbox{on}\ \partial\Omega,\ \lambda>0,\ 1<q<2.&
		\end{array}\right\}\tag{$P_{\lambda}$} \label{eqP}
\end{equation}

The hypotheses on the perturbation $f(z,x)$ are the following:

\smallskip
$H(f)_3:$ $f:\Omega\times\RR\rightarrow\RR$ is a Carath\'eodory function such that hypotheses $H(f)_3(i),(ii)$ are the some as the corresponding hypotheses $H(f)_1(i),(ii)$ and
\begin{itemize}
	\item[(iii)] there exist functions $\eta,\eta_0\in L^{\infty}(\Omega)$ such that
	\begin{eqnarray*}
		&&\eta(z)\leq\hat{\lambda}_1\ \mbox{for almost all}\ z\in\Omega,\ \eta\not\equiv\hat{\lambda}_1,\\
		&&\eta_0(z)\leq\lim\limits_{x\rightarrow 0}\frac{f(z,x)}{x}\leq\limsup\limits_{x\rightarrow 0}\frac{f(z,x)}{x}\leq\eta(z)\ \mbox{uniformly for almost all}\ z\in\Omega.
	\end{eqnarray*}
\end{itemize}

For every $\lambda>0$, let $\varphi_{\lambda}:H^1(\Omega)\rightarrow\RR$ be the energy (Euler) functional for problem \eqref{eqP} defined by
$$\varphi_{\lambda}(u)=\frac{1}{2}\gamma(u)-\frac{\lambda}{q}||u||^q_q-\int_{\Omega}F(z,u)dz\ \mbox{for all}\ u\in H^1(\Omega).$$

Evidently, $\varphi_{\lambda}\in C^1(H^1(\Omega))$.

Also, we consider the following truncations-perturbations of the reaction in problem \eqref{eqP}:
\begin{eqnarray}
	&&\hat{f}^+_{\lambda}(z,x)=\left\{\begin{array}{ll}
		0&\mbox{if}\ x\leq 0\\
		\lambda x^{q-1}+f(z,x)+\mu x&\mbox{if}\ 0<x,
	\end{array}\right.\label{eq72}\\
	&&\hat{f}^-_{\lambda}(z,x)=\left\{\begin{array}{ll}
		\lambda |x|^{q-2}x+f(z,x)+\mu x&\mbox{if}\ x\leq 0,\\
		0&\mbox{if}\ 0<x.
	\end{array}\right.\label{eq73}
\end{eqnarray}

Here $\mu>0$ is as in (\ref{eq3}). Both $\hat{f}^+_{\lambda}$ and $\hat{f}^-_{\lambda}$ are Carath\'eodory functions. We set $\hat{F}^{\pm}_{\lambda}(z,x)=\int^x_0\hat{f}^{\pm}_{\lambda}(z,s)ds$ and consider the $C^1$-functionals $\hat{\varphi}^{\pm}_{\lambda}:H^1(\Omega)\rightarrow\RR$ defined by
$$\hat{\varphi}^{\pm}_{\lambda}(u)=\frac{1}{2}\gamma(u)+\frac{\mu}{2}||u||^2_2-\int_{\Omega}\hat{F}^{\pm}_{\lambda}(z,u)dz\ \mbox{for all}\ u\in H^1(\Omega).$$

Since hypotheses $H(f)_3(i),(ii)$ are the same as $H(f)_1(i),(ii)$ and the concave term $\lambda|x|^{q-2}x$ does not affect the behavior of the reaction near $\pm\infty$, from Proposition \ref{prop5}, we have:
\begin{prop}\label{prop11}
	If hypotheses $H(\xi),\,H(\beta),\,H(f)_3$ hold and $\lambda>0$, then the functional $\varphi_{\lambda}$ satisfies the C-condition.
\end{prop}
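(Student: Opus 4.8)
The plan is to reduce Proposition \ref{prop11} to Proposition \ref{prop5} by observing that the only structural difference between $\varphi_\lambda$ and the functional $\varphi$ treated earlier is the presence of the concave term $-\frac{\lambda}{q}\|u\|_q^q$, which is subquadratic and hence negligible for the C-condition. Concretely, I would take a sequence $\{u_n\}_{n\ge1}\subseteq H^1(\Omega)$ satisfying $|\varphi_\lambda(u_n)|\le M_1$ for all $n$ and $(1+\|u_n\|)\varphi_\lambda'(u_n)\to 0$ in $H^1(\Omega)^*$, and show it admits a strongly convergent subsequence. The strategy mirrors Proposition \ref{prop5} verbatim: first establish boundedness of $\{u_n\}$, then upgrade weak to strong convergence via the Kadec--Klee property.

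For the boundedness step I would again argue by contradiction, assuming $\|u_n\|\to+\infty$ and setting $y_n=u_n/\|u_n\|$ so that $\|y_n\|=1$; passing to a subsequence we may take $y_n\xrightarrow{w}y$ in $H^1(\Omega)$ with $y_n\to y$ in $L^{2s/(s-1)}(\Omega)$ and in $L^2(\partial\Omega)$. The key point is that the extra term contributes $-\frac{\lambda}{q}\|u_n\|_q^q$ to the energy and $-\lambda\|u_n\|_q^q$ to the pairing $\langle\varphi_\lambda'(u_n),u_n\rangle$; since $1<q<2$, after dividing by $\|u_n\|^2$ both contributions vanish as $n\to\infty$. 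Thus in the case $y\ne 0$ the estimate $\int_\Omega F(z,u_n)/\|u_n\|^2\,dz\to+\infty$ (from $H(f)_3(ii)$, identical to $H(f)_1(ii)$) still contradicts the upper bound coming from $|\varphi_\lambda(u_n)|\le M_1$, because the concave term only perturbs that bound by a quantity that is $o(\|u_n\|^2)$. In the case $y=0$ the same $t_n\in[0,1]$ maximization argument applies: the quantity $\tau(z,x)=f(z,x)x-2F(z,x)$ governing inequality (\ref{eq32}) is unchanged by the concave term in the sense that the relevant bound $\langle\varphi_\lambda'(t_nu_n),t_nu_n\rangle=0$ yields $2\varphi_\lambda(t_nu_n)\le c_6+\frac{(2-q)\lambda}{q}\|t_nu_n\|_q^q$, and the subquadratic term is controlled against $\varphi_\lambda(t_nu_n)\to+\infty$, again producing a contradiction.

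Once boundedness is in hand, I would pass to $u_n\xrightarrow{w}u$ in $H^1(\Omega)$ with $u_n\to u$ in $L^{2s/(s-1)}(\Omega)$ and in $L^2(\partial\Omega)$, and additionally note that $u_n\to u$ in $L^q(\Omega)$ since $q<2<2s/(s-1)$. Choosing $h=u_n-u$ in the analogue of (\ref{eq11}), the concave term contributes $\lambda\int_\Omega|u_n|^{q-2}u_n(u_n-u)\,dz$, which tends to $0$ by the strong $L^q$-convergence together with the uniform $L^{q'}$-boundedness of $N_{|\cdot|^{q-2}(\cdot)}(u_n)$. Hence $\lim_{n\to\infty}\langle A(u_n),u_n-u\rangle=0$, giving $\|Du_n\|_2\to\|Du\|_2$ and, by the Kadec--Klee property, $u_n\to u$ in $H^1(\Omega)$.

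The main obstacle is purely bookkeeping: verifying that each occurrence of the concave term is genuinely subordinate. In the $y=0$ case in particular one must check that the extra term $\frac{(2-q)\lambda}{q}\|t_nu_n\|_q^q$ does not overwhelm the divergence $\varphi_\lambda(t_nu_n)\to+\infty$; this is where the restriction $1<q<2$ is essential, since it forces the concave contribution to grow strictly slower than the quadratic leading behaviour that drives the blow-up. Since hypotheses $H(f)_3(i),(ii)$ coincide with $H(f)_1(i),(ii)$, every estimate on $f$ and $F$ is inherited unchanged, so no new analytical input beyond these elementary comparisons is required.
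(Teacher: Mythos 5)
Your strategy is the one the paper itself intends (the paper ``proves'' Proposition \ref{prop11} by invoking Proposition \ref{prop5} and asserting the concave term is harmless), and two of your three steps are fine: in the case $y\neq0$ the concave term enters the relevant upper bound with a favourable sign, and in the Kadec--Klee step its contribution $\lambda\int_\Omega|u_n|^{q-2}u_n(u_n-u)dz$ indeed tends to $0$. But there is a genuine gap in the boundedness Claim, case $y=0$, exactly at the point you dismiss as ``purely bookkeeping''. The analogue of (\ref{eq14}) is obtained by adding
$$-\gamma(u_n)+\lambda\|u_n\|^q_q+\int_\Omega f(z,u_n)u_n\,dz\le\varepsilon_n
\quad\mbox{and}\quad
\gamma(u_n)-\frac{2\lambda}{q}\|u_n\|^q_q-\int_\Omega 2F(z,u_n)\,dz\le 2M_1,$$
and while the quadratic terms cancel, the $q$-homogeneous ones do not (they carry coefficients $1$ and $\frac{2}{q}$ respectively). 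What survives is
$$\int_\Omega\tau(z,u_n)\,dz\le M_2+\frac{(2-q)\lambda}{q}\|u_n\|^q_q,$$
not $\int_\Omega\tau(z,u_n)dz\le M_2$. Feeding this into the $t_n$-argument, the analogue of (\ref{eq33}) reads
$$2\varphi_\lambda(t_nu_n)\le c_6+\frac{(2-q)\lambda}{q}\left(1-t_n^q\right)\|u_n\|^q_q$$
(the extra term sits at $u_n$, not at $t_nu_n$ as you wrote, since it comes from the bound on $\int_\Omega\tau(z,u_n)dz$). The right-hand side need not be bounded: in the regime $y=0$ one only knows $\|y_n\|_q\rightarrow0$, i.e. $\|u_n\|_q^q=o(\|u_n\|^q)$, which is perfectly compatible with $\|u_n\|^q_q\rightarrow+\infty$. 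Your proposed contradiction --- the concave term ``grows strictly slower than the quadratic leading behaviour that drives the blow-up'' --- does not exist, because (\ref{eq28}) carries no rate whatsoever: $\varphi_\lambda(t_nu_n)\rightarrow+\infty$ is deduced by letting the fixed parameter $\eta$ be arbitrary, with a threshold $n_1=n_1(\eta)$, and no lower bound of the form $\varphi_\lambda(t_nu_n)\ge\delta\|t_nu_n\|^2$ (or any power of $\|u_n\|$) is available. So when $(1-t_n^q)\|u_n\|^q_q\rightarrow+\infty$ the two displayed facts are consistent and no contradiction follows.

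Put differently, the clean reduction you are implicitly relying on also fails: for the full reaction $g(z,x)=\lambda|x|^{q-2}x+f(z,x)$ one has $\tau_g(z,x)=g(z,x)x-2G(z,x)=\tau(z,x)-\frac{(2-q)\lambda}{q}|x|^q$, and this function need not satisfy the quasi-monotonicity hypothesis $H(f)_1(ii)$ even though $\tau$ does, since the correction $-\frac{(2-q)\lambda}{q}|x|^q$ is decreasing in $|x|$ with unbounded decrements. Be aware that the paper's own one-sentence justification (``the concave term does not affect the behavior of the reaction near $\pm\infty$'') does not address this either, so you cannot close the hole by appeal to the text: a complete proof must either show that $\|u_n\|_q$ remains bounded along the hypothetical unbounded Cerami sequence, or otherwise control $(1-t_n^q)\|u_n\|^q_q$, or replace this step by a different argument.
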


Using similar arguments, we can also prove the following result:
\begin{prop}\label{prop12}
	If hypotheses $H(\xi),H(\beta),H(f)_3$ hold and $\lambda>0$, then the functionals $\hat{\varphi}^{\pm}_{\lambda}$ satisfy the C-condition.
\end{prop}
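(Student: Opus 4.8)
The plan is to adapt the proof of Proposition \ref{prop5} to each of the two truncated-perturbed functionals; I describe $\hat{\varphi}^+_\lambda$, the case $\hat{\varphi}^-_\lambda$ being handled symmetrically by interchanging the roles of $u^+$ and $u^-$. Fix a sequence $\{u_n\}_{n\geq1}\subseteq H^1(\Omega)$ with $\{\hat{\varphi}^+_\lambda(u_n)\}$ bounded and $(1+\|u_n\|)(\hat{\varphi}^+_\lambda)'(u_n)\to0$ in $H^1(\Omega)^*$. The first step, special to the truncated functional, is to test the derivative condition with $h=-u_n^-$: since $\hat{f}^+_\lambda(z,\cdot)\equiv0$ on $(-\infty,0]$ the reaction term drops out and the computation collapses to $\gamma(u_n^-)+\mu\|u_n^-\|_2^2\leq\varepsilon_n$, whence (\ref{eq3}) gives $c_0\|u_n^-\|^2\leq\varepsilon_n\to0$, so $u_n^-\to0$ in $H^1(\Omega)$. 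Thus, modulo a null sequence, the sequence is nonnegative, and the role of the perturbation $\mu$ together with the coercivity estimate (\ref{eq3}) is precisely to force the negative part to vanish.

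The second, and principal, step is to show that $\{u_n^+\}$, hence $\{u_n\}$, is bounded. Here I would reproduce the two-branch contradiction scheme of Proposition \ref{prop5}: assuming $\|u_n\|\to+\infty$, set $y_n=u_n/\|u_n\|$ and pass to a subsequence with $y_n\xrightarrow{w}y$ and $y_n\to y$ in $L^{\frac{2s}{s-1}}(\Omega)$ and in $L^2(\partial\Omega)$; since $u_n^-\to0$ we have $y\geq0$. If $y\neq0$, then $u_n\to+\infty$ on $\Omega^*=\{y>0\}$ and hypothesis $H(f)_3(ii)$ (identical to $H(f)_1(ii)$) forces $\int_\Omega F(z,u_n)/\|u_n\|^2\,dz\to+\infty$ by Fatou, contradicting the energy bound; after dividing by $\|u_n\|^2$ the concave term contributes $\lambda\|u_n^+\|_q^q/\|u_n\|^2=O(\|u_n\|^{q-2})\to0$ (as $q<2$) and the quadratic perturbation reduces to the vanishing quantity $\frac{\mu}{2}\|u_n^-\|_2^2$, so neither affects the estimate. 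If $y=0$, I would run the maximization argument over $t\in[0,1]$ with $v_n=(2\eta)^{1/2}y_n$: here $v_n\to0$ in $L^2(\Omega)$ and in $L^{\frac{2s}{s-1}}(\Omega)$, so Vitali's theorem yields $\int_\Omega\hat{F}^+_\lambda(z,v_n)\,dz\to0$ despite the almost critical growth, forcing $\hat{\varphi}^+_\lambda(t_nu_n)\to+\infty$, while the $\tau$-quasimonotonicity of $H(f)_3(ii)$ is used to control the competing upper bound for $2\hat{\varphi}^+_\lambda(t_nu_n)$.

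With boundedness in hand, the conclusion is exactly the end of Proposition \ref{prop5}: along a subsequence $u_n\xrightarrow{w}u$ in $H^1(\Omega)$ with strong convergence in $L^{\frac{2s}{s-1}}(\Omega)$ and $L^2(\partial\Omega)$; testing the derivative condition with $h=u_n-u$ and letting $n\to\infty$ gives $\langle A(u_n),u_n-u\rangle\to0$, whence $\|Du_n\|_2\to\|Du\|_2$ and, by the Kadec-Klee property, $u_n\to u$ in $H^1(\Omega)$. The additional Nemytskii term $\lambda N_{|u|^{q-2}u}(u_n)$ and the term $\mu u_n$ converge strongly in $H^1(\Omega)^*$ (they are subcritical, hence compact, since $q<2<2^*$), so they cause no difficulty in this limiting passage.

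I expect the main obstacle to be the one already present in Proposition \ref{prop5}, namely the almost critical growth in $H(f)_3(i)$, which destroys the compactness of $H^1(\Omega)\hookrightarrow L^{2^*}(\Omega)$ and forbids a direct dominated-convergence argument; this is circumvented through the uniform-integrability estimate feeding Vitali's theorem. The genuinely new point is to confirm that the sublinear concave term and the quadratic perturbation stay of lower order throughout the boundedness step: this is transparent in the branch $y\neq0$ after normalization, but in the branch $y=0$ it requires care, since one must verify that the $L^q$-contribution entering the $\tau$-estimate remains dominated by the superlinear reaction. This is exactly the informal content of the remark that the concave term does not affect the behaviour of the reaction near $\pm\infty$, made precise.
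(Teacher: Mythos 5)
Your proposal reproduces the paper's own proof: the paper likewise first tests $(\hat{\varphi}^{+}_{\lambda})'(u_n)$ with $h=-u_n^-\in H^1(\Omega)$ and uses the truncation (\ref{eq72}) together with (\ref{eq3}) to get $u_n^-\rightarrow 0$ in $H^1(\Omega)$, then disposes of the boundedness of $\{u_n^+\}_{n\geq1}$ with the single phrase ``using (\ref{eq72}) and reasoning as in the proof of Proposition \ref{prop5}'', and concludes via the Kadec--Klee property, which is exactly your three-step scheme. If anything, your write-up is more explicit than the published one, since the paper never even mentions the one delicate point you flag --- that in the $y=0$ branch the quasimonotonicity hypothesis applies to $\tau(z,\cdot)$ alone, so the truncated functional's $\tau$-estimate picks up the extra term $\lambda\frac{2-q}{q}\left(1-t_n^q\right)||u_n^+||_q^q$, whose control the paper tacitly takes for granted; like the paper, you assert rather than verify that this $L^q$ contribution stays of lower order, so your proposal is complete precisely to the same degree as the paper's own argument.
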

\begin{proof}
	As we already indicated, the proof is basically the same as that of Proposition \ref{prop5}. So, we only present the first part of the proof, which differs a little due to the unilateral nature of the functionals $\hat{\varphi}^{\pm}_{\lambda}$ (see (\ref{eq72}) and (\ref{eq73})).
	
	So, let $\{u_n\}_{n\geq 1}\subseteq H^1(\Omega)$ be a sequence such that
	\begin{eqnarray}
		&&|\hat{\varphi}^{+}_{\lambda}(u_n)|\leq M_5\ \mbox{for some}\ M_5>0,\ \mbox{all}\ n\in\NN,\label{eq74}\\
		&&(1+||u_n||)(\hat{\varphi}^+_{\lambda})'(u_n)\rightarrow 0\ \mbox{in}\ H^1(\Omega)^*\ \mbox{as}\ n\rightarrow\infty\,.\label{eq75}
	\end{eqnarray}
	
	From (\ref{eq75}) we have
	\begin{eqnarray}\label{eq76}
		&&
		\left|\left\langle A(u_n),h\right\rangle+\int_{\Omega}(\xi(z)+\mu)u_nhdz
		+	
		\int_{\partial\Omega}\beta(z)u_nhd\sigma
		-
		\int_{\Omega}\hat{f}^+_{\lambda}(z,u_n)hdz\right|
				\\		
		&&\leq
		\frac{\epsilon_n||h||}{1+||u_n||} \ \ \mbox{for all}\ h\in H^1(\Omega),\ \mbox{with}\ \epsilon_n\rightarrow 0^+.\nonumber
	\end{eqnarray}
	
	In (\ref{eq76}) we choose $h=-u^-_n\in H^1(\Omega)$. Then
	\begin{eqnarray}\label{eq77}
		&&\gamma(u^-_n)+\mu||u^-_n||^2_2\leq\epsilon_n\ \mbox{for all}\ n\in\NN\ (\mbox{see\ (\ref{eq72})}),\nonumber\\
		&\Rightarrow&c_0||u^-_n||^2\leq\epsilon_n\ \mbox{for all}\ n\in\NN\ (\mbox{see (\ref{eq3})}),\nonumber\\
		&\Rightarrow&u^-_n\rightarrow 0\ \mbox{in}\ H^1(\Omega).
	\end{eqnarray}
	
	Using (\ref{eq72}) and reasoning as in the proof of Proposition \ref{prop5}, we show that
	\begin{equation}\label{eq78}
		\{u^+_n\}_{n\geq 1}\subseteq H^1(\Omega)\ \mbox{is bounded}.
	\end{equation}
	
	From (\ref{eq77}) and (\ref{eq78}) it follows that
	$$\{u_n\}_{n\geq 1}\subseteq H^1(\Omega)\ \mbox{is bounded}.$$
	
	From this via the Kadec-Klee property, as in the proof of Proposition \ref{prop5}, we establish that
	$$\hat{\varphi}^+_{\lambda}\ \mbox{satisfies the C-condition}.$$
	
	In a similar fashion, using this time (\ref{eq73}), we show that $\hat{\varphi}^-_{\lambda}$ satisfies the C-condition.
\end{proof}

Hypothesis $H(f)_3(ii)$ (the superlinearity condition) implies that the functionals $\hat{\varphi}^{\pm}_{\lambda}$ are unbounded below.
\begin{prop}\label{prop13}
	If hypotheses $H(\xi),H(\beta),H(f)_3$ hold, $\lambda>0$ and $u\in D_+$ then $\hat{\varphi}^{\pm}_{\lambda}(tu)\rightarrow-\infty$ as $t\rightarrow\pm\infty$.
\end{prop}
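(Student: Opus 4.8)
The plan is to evaluate $\hat{\varphi}^{\pm}_{\lambda}$ along the ray $\{tu:t\in\RR\}$ and to exploit the superlinearity hypothesis $H(f)_3(ii)$ (which coincides with $H(f)_1(ii)$) together with the fact that the concave exponent satisfies $q<2$. I would treat $\hat{\varphi}^{+}_{\lambda}$ with $t\rightarrow+\infty$ in detail; the case of $\hat{\varphi}^{-}_{\lambda}$ with $t\rightarrow-\infty$ is entirely symmetric, using (\ref{eq73}) in place of (\ref{eq72}).

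First I would compute the truncated primitive along the ray. Since $u\in D_+$ we have $u(z)>0$ for all $z\in\overline{\Omega}$, hence $tu(z)>0$ for every $t>0$; so from (\ref{eq72}) one gets $\hat{F}^{+}_{\lambda}(z,tu(z))=\frac{\lambda}{q}(tu(z))^q+F(z,tu(z))+\frac{\mu}{2}(tu(z))^2$. Substituting into the definition of $\hat{\varphi}^{+}_{\lambda}$, the two terms carrying the factor $\frac{\mu}{2}t^2||u||_2^2$ cancel, leaving
$$\hat{\varphi}^{+}_{\lambda}(tu)=\frac{t^2}{2}\gamma(u)-\frac{\lambda t^q}{q}||u||_q^q-\int_\Omega F(z,tu)\,dz\quad(t>0).$$

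Next I would invoke the superlinear lower bound. Exactly as in the derivation of (\ref{eq35}) in the proof of Proposition \ref{prop6}, hypotheses $H(f)_3(i),(ii)$ yield, for any prescribed $\eta>0$, a constant $c_7=c_7(\eta)>0$ with $F(z,x)\geq\frac{\eta}{2}x^2-c_7$ for almost all $z\in\Omega$ and all $x\in\RR$. Using this to estimate $\int_\Omega F(z,tu)\,dz$ from below gives
$$\hat{\varphi}^{+}_{\lambda}(tu)\leq\frac{t^2}{2}\left[\gamma(u)-\eta||u||_2^2\right]-\frac{\lambda t^q}{q}||u||_q^q+c_7|\Omega|_N.$$
Since $u\neq0$ we have $||u||_2^2>0$, so I may fix $\eta>\gamma(u)/||u||_2^2$, which makes the bracket strictly negative.

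The conclusion is then a matter of comparing growth rates: the dominant term on the right is the quadratic one $\frac{t^2}{2}[\gamma(u)-\eta||u||_2^2]$, which tends to $-\infty$; the concave contribution $-\frac{\lambda t^q}{q}||u||_q^q$ is negative and, because $1<q<2$, of strictly lower order, so it only reinforces the divergence, while $c_7|\Omega|_N$ is constant. Hence $\hat{\varphi}^{+}_{\lambda}(tu)\rightarrow-\infty$ as $t\rightarrow+\infty$, and the analogous computation with (\ref{eq73}) for $t<0$ gives $\hat{\varphi}^{-}_{\lambda}(tu)\rightarrow-\infty$ as $t\rightarrow-\infty$. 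There is no genuine obstacle here beyond bookkeeping: the whole argument rests on the superlinear estimate (\ref{eq35}), and the one point worth double-checking is that the perturbation $\mu$ built into (\ref{eq72})--(\ref{eq73}) is precisely annihilated by the $\frac{\mu}{2}||u||_2^2$ term in $\hat{\varphi}^{\pm}_{\lambda}$, so that no quadratic remainder survives to compete with the negative coefficient $\gamma(u)-\eta||u||_2^2$.
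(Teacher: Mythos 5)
Your proof is correct and is essentially the argument the paper intends: the paper states Proposition \ref{prop13} without proof, as an immediate consequence of the superlinearity hypothesis $H(f)_3(ii)$, and the implicit argument is exactly the estimate \eqref{eq35} together with the computation \eqref{eq36}--\eqref{eq37} from Proposition \ref{prop6}, which you reproduce along the ray $\{tu : t>0\}$ after correctly observing that the $\frac{\mu}{2}t^2\|u\|_2^2$ terms cancel and that the concave term $-\frac{\lambda t^q}{q}\|u\|_q^q$ only reinforces the divergence. No gaps; the choice $\eta>\gamma(u)/\|u\|_2^2$ (interpreted as any positive $\eta$ when $\gamma(u)\leq 0$) is legitimate since \eqref{eq35} holds for every $\eta>0$.
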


The next result will help us to verify the mountain pass geometry (see Theorem \ref{th1}) for the functionals $\hat{\varphi}^{\pm}_{\lambda}$ when $\lambda>0$ is small.
\begin{prop}\label{prop14}
	If hypotheses $H(\xi),H(\beta),H(f)_3$ hold, then we can find $\lambda^{\pm}_*>0$ such that for every $\lambda\in(0,\lambda^{\pm}_{*})$ we can find $\rho^{\pm}_{\lambda}>0$ for which we have
	$$\inf[\hat{\varphi}^{\pm}_{\lambda}(u):||u||=\rho^{\pm}_{\lambda}]=\hat{m}^{\pm}_{\lambda}>0.$$
\end{prop}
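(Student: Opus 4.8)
The plan is to establish mountain-pass geometry at the origin for $\hat\varphi^+_\lambda$, the argument for $\hat\varphi^-_\lambda$ being entirely symmetric (using (\ref{eq73}) and $u^-$ in place of $u^+$). First I would rewrite the functional in a form that separates the positive and negative parts of $u$. Since $u^+$ and $u^-$ have disjoint supports and $Du^+\cdot Du^-=0$ almost everywhere, one has $\gamma(u)=\gamma(u^+)+\gamma(u^-)$; combining this with the definition (\ref{eq72}) of $\hat f^+_\lambda$ (and cancelling the $\frac{\mu}{2}\|u^+\|_2^2$ terms) yields
$$\hat\varphi^+_\lambda(u)=\frac12\gamma(u^+)+\frac12\gamma(u^-)+\frac{\mu}{2}\|u^-\|_2^2-\frac{\lambda}{q}\|u^+\|_q^q-\int_\Omega F(z,u^+)\,dz.$$

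Next I would extract the behaviour of $F$ near the origin. Hypothesis $H(f)_3(iii)$ gives, for every $\varepsilon>0$, a $\delta>0$ with $F(z,x)\le\frac12(\eta(z)+\varepsilon)x^2$ for $|x|\le\delta$ (integrate the bound $f(z,x)/x\le\eta(z)+\varepsilon$); absorbing the region $|x|>\delta$ into an almost-critical term via $H(f)_3(i)$ produces the global estimate $F(z,x)\le\frac12(\eta(z)+\varepsilon)x^2+c\,|x|^{2^*}$ for some $c=c(\varepsilon)>0$. Applying this to $u^+\ge0$ bounds $\int_\Omega F(z,u^+)\,dz$ from above by $\frac12\int_\Omega(\eta+\varepsilon)(u^+)^2\,dz+c\|u^+\|_{2^*}^{2^*}$.

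The key step is a coercivity estimate for the quadratic form twisted by $\eta$: there exists $c_0>0$ with
$$\gamma(v)-\int_\Omega\eta(z)v^2\,dz\ge c_0\|v\|^2\quad\text{for all }v\in H^1(\Omega).$$
I would prove this by contradiction: a normalized minimizing sequence converges weakly, and by weak lower semicontinuity of $\gamma$ together with the variational characterization (\ref{eq4}) the weak limit $v$ satisfies $\gamma(v)=\hat\lambda_1\|v\|_2^2$ and $\int_\Omega(\hat\lambda_1-\eta)v^2\,dz=0$; hence either $v=0$, which forces $\|Dv\|_2\to0$ and contradicts normalization, or $v\in E(\hat\lambda_1)$ is a nonzero multiple of $\hat u_1$, which by the unique continuation property is nonzero a.e., forcing $\eta\equiv\hat\lambda_1$ and contradicting $H(f)_3(iii)$. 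This is precisely where the strict inequality $\eta\not\equiv\hat\lambda_1$ is used, and it is the main obstacle, since without it the coefficient of the quadratic term below could vanish and the concave perturbation would drag the functional negative; I expect this coercivity to be the heart of the proof.

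Putting the pieces together, using (\ref{eq3}) for the $u^-$ contribution and choosing $\varepsilon>0$ small enough to absorb $\frac\varepsilon2\|u^+\|_2^2$ via $\|u^+\|_2\le C\|u^+\|$, I would arrive at
$$\hat\varphi^+_\lambda(u)\ge c_1\|u^+\|^2+c_2\|u^-\|^2-\frac{\lambda}{q}\|u^+\|_q^q-c\|u^+\|_{2^*}^{2^*}$$
for some $c_1,c_2>0$. The Sobolev embeddings $\|u^+\|_q\le c_3\|u^+\|$ and $\|u^+\|_{2^*}\le c_4\|u^+\|$ (valid since $q<2<2^*$) then give, on the sphere $\|u\|=\rho$ and using $\|u^+\|\le\rho$,
$$\hat\varphi^+_\lambda(u)\ge \frac{c_*}{2}\rho^2-\frac{\lambda\tilde c}{q}\rho^q-\hat c\,\rho^{2^*}=\rho^q\Big[\frac{c_*}{2}\rho^{2-q}-\frac{\lambda\tilde c}{q}-\hat c\,\rho^{2^*-q}\Big],$$
with $c_*=\min\{2c_1,2c_2\}$, $\tilde c=c_3^q$ and $\hat c=c\,c_4^{2^*}$. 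Since $q<2<2^*$, the map $\rho\mapsto\frac{c_*}{2}\rho^{2-q}-\hat c\,\rho^{2^*-q}$ is positive for small $\rho$ and attains a positive maximum $\beta_*$ at some $\rho_*>0$; setting $\lambda^+_*=q\beta_*/\tilde c$, for every $\lambda\in(0,\lambda^+_*)$ the bracket is positive at $\rho^+_\lambda=\rho_*$. The displayed lower bound is then a positive constant $\hat m^+_\lambda$ independent of the particular $u$ on the sphere, which gives $\inf[\hat\varphi^+_\lambda(u):\|u\|=\rho^+_\lambda]\ge\hat m^+_\lambda>0$, as required.
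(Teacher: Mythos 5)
Your proposal is correct and follows essentially the same route as the paper: split $\hat\varphi^{\pm}_{\lambda}$ along $u^{+}$ and $u^{-}$, bound $F$ via $F(z,x)\le\frac12(\eta(z)+\varepsilon)x^2+c_{\varepsilon}|x|^{2^*}$, use the coercivity of $v\mapsto\gamma(v)-\int_{\Omega}\eta(z)v^2dz$ on the $u^{+}$ part and (\ref{eq3}) on the $u^{-}$ part, and then balance the $\rho^2$, $\lambda\rho^q$ and $\rho^{2^*}$ terms for $\lambda$ small. The only differences are cosmetic: the paper cites the coercivity inequality from Lemma 4.11 of Mugnai and Papageorgiou \cite{16} where you prove it directly (correctly, via weak lower semicontinuity, the characterization (\ref{eq4}) and the positivity of $\hat u_1$), and the paper minimizes $\Im_{\lambda}(t)=\lambda t^{q-2}+t^{2^*-2}$ to get a $\lambda$-dependent radius $\rho^{+}_{\lambda}=t_0(\lambda)$, whereas you fix the radius $\rho_*$ and shrink $\lambda$, which is equally valid.
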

\begin{proof}
	Hypotheses $H_f(3)(i),(iii)$ imply that given $\epsilon>0$, we can find $c_{\epsilon}>0$ such that
	\begin{equation}\label{eq79}
		F(z,x)\leq\frac{1}{2}(\eta(z)+\epsilon)x^2+c_{\epsilon}|x|^{2^*}\ \mbox{for almost all}\ z\in\Omega,\ \mbox{all}\ x\in\RR.
	\end{equation}
	
	For every $u\in H^1(\Omega)$, we have
	\begin{eqnarray*}
		\hat{\varphi}^{+}_{\lambda}(u)&\geq&\frac{1}{2}\gamma(u)+\frac{\mu}{2}||u^-||^2_2-\frac{1}{2}\int_{\Omega}\eta(z)(u^+)^2dz-\frac{\epsilon}{2}||u^+||^2-c_{18}(\lambda||u||^q+||u||^{2^*})\\
		&&\mbox{for some}\ c_{18}>0\ (\mbox{see (\ref{eq72}), (\ref{eq79})})\\
		&=&\frac{1}{2}\left[\gamma(u^+)-\int_{\Omega}\eta(z)(u^+)^2dz-\epsilon||u^+||^2\right]+\frac{1}{2}[\gamma(u^-)+\mu||u^-||^2_2]-\\
		&&\hspace{0.5cm}c_{18}(\lambda||u||^q+||u||^{2^*})\\
		&\geq&\frac{1}{2}(c_{19}-\epsilon)||u^+||^2+\frac{c_0}{2}||u^-||^2-c_{18}(\lambda||u||^q+||u||^{2^*})
	\end{eqnarray*}
	for some $c_{19}>0$ (see Lemma 4.11 of Mugnai and Papageorgiou \cite{16} and (\ref{eq3})).
	
	So, choosing $\epsilon\in(0,c_{19})$, we have
	\begin{equation}\label{eq80}
		\hat{\varphi}^{\pm}_{\lambda}(u)\geq\left[c_{20}-c_{18}(\lambda||u||^{q-2}+||u||^{2^*-2})\right]||u||^2\ \mbox{for some}\ c_{20}>0.
	\end{equation}
	
	Consider the function
	$$\Im_{\lambda}(t)=\lambda t^{q-2}+t^{2^*-2}\ \mbox{for all}\ t>0,\ \lambda>0.$$
	
	Since $q<2<2^*$, we see that
	$$\Im_{\lambda}(t)\rightarrow+\infty\ \mbox{as}\ t\rightarrow 0^+\ \mbox{and as}\ t\rightarrow+\infty.$$
	
	So, we can find $t_0=t_0(\lambda)\in(0,+\infty)$ such that
	$$\Im_{\lambda}(t_0)=\min[\Im_{\lambda}(t):t>0].$$
	
	We have
	\begin{eqnarray*}
		&&\Im'_{\lambda}(t_0)=0,\\
		&\Rightarrow&\lambda(2-q)t^{q-3}_0=(2^*-2)t_0^{2^*-3},\\
		&\Rightarrow&t_0=t_0(\lambda)=\left[\frac{\lambda(2-q)}{2^*-2}\right]^{\frac{1}{2^*-q}}.
	\end{eqnarray*}
	
	Hence it follows that
	$$\Im_{\lambda}(t_0)\rightarrow 0^+\ \mbox{as}\ \lambda\rightarrow 0^+.$$
	
	So, we can find $\lambda^+_*>0$ such that
	$$\Im_{\lambda}(t_0)<\frac{c_{20}}{c_{18}}\ \mbox{for all}\ \lambda\in(0,\lambda^+_*).$$
	
	Returning to (\ref{eq80}) and using this fact we obtain
	$$\hat{\varphi}^+_{\lambda}(u)\geq\hat{m}^+_{\lambda}>0\ \mbox{for all}\ u\in H^1(\Omega),\ ||u||=\rho^+_{\lambda}=t_0(\lambda).$$
	
	Similarly for the functional $\hat{\varphi}^-_{\lambda}$.
\end{proof}

To produce multiple solutions of constant sign, we need to strengthen the condition on the potential function. The new hypothesis on $\xi(\cdot)$ is:

\smallskip
$H(\xi)':$ $\xi\in L^s(\Omega)$ with $s>N$ and $\xi^+\in L^{\infty}(\Omega)$.
\begin{prop}\label{prop15}
	If hypotheses $H(\xi)',H(\beta),H(f)_3$ hold, then
	\begin{itemize}
		\item[(a)] for every $\lambda\in(0,\lambda^+_*)$ problem \eqref{eqP} has two positive solutions
		$$u_0,\hat{u}\in D_+\ \mbox{with}\ u_0\ \mbox{being a local minimizer of}\ \varphi_{\lambda};$$
		\item[(b)] for every $\lambda\in(0,\lambda^-_*)$ problem \eqref{eqP} has two negative solutions
		$$v_0,\hat{v}\in-D_+\ \mbox{with}\ v_0\ \mbox{being a local minimizer of}\ \varphi_{\lambda};$$
		\item[(c)] for every $\lambda\in(0,\lambda_*)$ (here $\lambda_*=\min\{\lambda^+_*,\lambda^-_*\}$) problem \eqref{eqP} has at least four nontrivial solutions of constant sign
		\begin{eqnarray*}
			&&u_0,\hat{u}\in D_+,\ v_0,\hat{v}\in-D_+,\\
			&&u_0\ \mbox{and}\ v_0\ \mbox{are local minimizers of}\ \varphi_{\lambda}.
		\end{eqnarray*}
	\end{itemize}
\end{prop}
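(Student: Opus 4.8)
The plan is to prove part (a) in full, obtain part (b) by the symmetric argument carried out with $\hat{\varphi}^-_{\lambda}$ and the cone $-D_+$, and deduce (c) at once by choosing $\lambda\in(0,\lambda_*)$ with $\lambda_*=\min\{\lambda^+_*,\lambda^-_*\}$, so that (a) and (b) apply together. The guiding idea is to work with the truncated functionals $\hat{\varphi}^{\pm}_{\lambda}$ of \eqref{eq72}--\eqref{eq73}, which agree with $\varphi_{\lambda}$ on the corresponding order cones; consequently every critical point of $\hat{\varphi}^{\pm}_{\lambda}$ is automatically a constant-sign solution of \eqref{eqP}, and I only have to produce two critical points of $\hat{\varphi}^+_{\lambda}$.

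First I produce the local minimizer $u_0$. Fix $\lambda\in(0,\lambda^+_*)$ and minimize $\hat{\varphi}^+_{\lambda}$ over the closed ball $\overline{B}_{\rho^+_{\lambda}}=\{u:\|u\|\le\rho^+_{\lambda}\}$ furnished by Proposition \ref{prop14}. This ball is weakly compact, and $\hat{\varphi}^+_{\lambda}$ is sequentially weakly lower semicontinuous on it: the quadratic part $\tfrac12\gamma(\cdot)+\tfrac{\mu}{2}\|\cdot\|_2^2$ is weakly l.s.c.\ since its gradient term is weakly l.s.c.\ and the lower-order terms pass to the limit through the compact embeddings $H^1(\Omega)\hookrightarrow L^{2s/(s-1)}(\Omega)$ and $H^1(\Omega)\to L^2(\partial\Omega)$, while $\int_\Omega\hat{F}^+_\lambda(z,\cdot)\,dz$ is weakly continuous on bounded sets by the uniform integrability / Vitali device already used in Proposition \ref{prop5}. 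Hence the infimum is attained at some $u_0\in\overline{B}_{\rho^+_{\lambda}}$. Testing along a direction $u\in D_+$ gives $\hat{\varphi}^+_\lambda(tu)=\tfrac{t^2}{2}(\cdots)-\tfrac{\lambda}{q}t^q\|u\|_q^q+\cdots<0$ for small $t>0$, because $q<2$ forces the concave term to dominate; thus $\hat{\varphi}^+_\lambda(u_0)<0$. Since $\hat{\varphi}^+_\lambda\ge\hat{m}^+_\lambda>0$ on $\partial B_{\rho^+_\lambda}$, the minimizer lies in the open ball, so $u_0$ is a local minimizer and $(\hat{\varphi}^+_\lambda)'(u_0)=0$. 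Testing this with $h=-u_0^-$ and using \eqref{eq72} and \eqref{eq3} yields $c_0\|u_0^-\|^2\le0$, whence $u_0\ge0$ and $u_0$ solves \eqref{eqP}; the regularity argument of Theorem \ref{th8} gives $u_0\in C^1(\overline{\Omega})$, and the reinforced hypothesis $H(\xi)'$ allows the strong maximum principle, so $u_0\in D_+$. As $u_0\in D_+=\operatorname{int}C_+$, a $C^1(\overline{\Omega})$-neighborhood of $u_0$ lies in $C_+$, where $\hat{\varphi}^+_\lambda\equiv\varphi_\lambda$; hence $u_0$ is a local $C^1(\overline{\Omega})$-minimizer of $\varphi_\lambda$, and Proposition \ref{prop4} upgrades this to a local $H^1(\Omega)$-minimizer.

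For the second positive solution I apply the mountain pass theorem (Theorem \ref{th1}) to $\hat{\varphi}^+_\lambda$, based at the origin; the C-condition holds by Proposition \ref{prop12}. Here $\hat{\varphi}^+_\lambda(0)=0$, Proposition \ref{prop14} gives $\hat{\varphi}^+_\lambda\ge\hat{m}^+_\lambda>0$ on $\partial B_{\rho^+_\lambda}$, and Proposition \ref{prop13} provides $e=t_1u\in D_+$ with $\|e\|>\rho^+_\lambda$ and $\hat{\varphi}^+_\lambda(e)<0$. Then $\max\{\hat{\varphi}^+_\lambda(0),\hat{\varphi}^+_\lambda(e)\}=0<\hat{m}^+_\lambda\le m_{\rho^+_\lambda}$, the mountain pass geometry is verified, and the critical value $c\ge\hat{m}^+_\lambda>0$ is attained at some $\hat{u}\in K_{\hat{\varphi}^+_\lambda}$. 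Since $\hat{\varphi}^+_\lambda(\hat{u})=c>0>\hat{\varphi}^+_\lambda(u_0)$ and $\hat{\varphi}^+_\lambda(0)=0$, the solution $\hat{u}$ is distinct from both $0$ and $u_0$, and arguing exactly as for $u_0$ gives $\hat{u}\in D_+$. Basing the mountain pass at the origin, rather than at $u_0$, is what lets me read off $c>0$ directly and thereby separate $\hat{u}$ from the trivial solution without any critical-group bookkeeping at $0$.

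The main obstacle is the failure of compactness of $H^1(\Omega)\hookrightarrow L^{2^*}(\Omega)$ caused by the almost critical growth of $f$: this is precisely what makes the weak lower semicontinuity in the minimization step nontrivial, and it is handled by the Vitali / uniform-integrability argument borrowed from Proposition \ref{prop5}. A secondary delicate point is the implication $u_0\ge0\Rightarrow u_0\in D_+$, which genuinely requires the strengthened assumption $H(\xi)'$ (namely $\xi^+\in L^\infty(\Omega)$) in order to apply the strong maximum principle; everything else is routine once Propositions \ref{prop12}, \ref{prop13} and \ref{prop14} are in hand.
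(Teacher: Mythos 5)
Your proposal is correct and follows essentially the same route as the paper's proof: minimize the truncation $\hat{\varphi}^+_{\lambda}$ over the ball supplied by Proposition \ref{prop14} to get the local minimizer $u_0$ (then Wang's regularity, the strong maximum principle via $H(\xi)'$, and Proposition \ref{prop4}), and produce $\hat{u}$ via the mountain pass theorem using Propositions \ref{prop12}--\ref{prop14}, with (b) by symmetry and (c) by intersecting the parameter ranges. The only cosmetic difference is that you base the mountain pass at the origin rather than at $u_0$, which if anything fits the literal statement of Theorem \ref{th1} more cleanly; the paper achieves the same separation of $\hat{u}$ from $0$ and $u_0$ by the value comparison $\hat{\varphi}^+_{\lambda}(\hat{u})\geq\hat{m}^+_{\lambda}>0>\hat{\varphi}^+_{\lambda}(u_0)$, so neither argument needs any critical-group information at the origin.
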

\begin{proof}
	\textit{(a)} Let $\lambda\in(0,\lambda^+_*)$ and let $\rho^+_{\lambda}>0$ be as postulated by Proposition \ref{prop14}. We consider the set
	$$\bar{B}^+_{\lambda}=\{u\in H^1(\Omega):||u||\leq\rho^+_{\lambda}\}.$$
	
	This set is weakly compact. Also, using the Sobolev embedding theorem and the compactness of the trace map, we see that $\hat{\varphi}^+_{\lambda}$ is sequentially weakly lower semicontinuous. So, by the Weierstrass-Tonelli theorem, we can find $u_0\in H^1(\Omega)$ such that
	\begin{equation}\label{eq81}
		\hat{\varphi}^+_{\lambda}(u_0)=\inf[\hat{\varphi}^+_{\lambda}(u):u\in H^1(\Omega)].
	\end{equation}
	
	Since $q<2$, for $t\in(0,1)$ small we have
	\begin{eqnarray*}
		&&\hat{\varphi}^+_{\lambda}(t\hat{u}_1)<0,\\
		&\Rightarrow&\hat{\varphi}^+_{\lambda}(u_0)<0=\hat{\varphi}^+_{\lambda}(0)\ (\mbox{see (\ref{eq81})}),\\
		&\Rightarrow&u_0\neq 0\ \mbox{and}\ ||u_0||<\rho^+_{\lambda}\ (\mbox{see Proposition \ref{prop14}}).
	\end{eqnarray*}
	
	This fact and (\ref{eq81}) imply that
	\begin{eqnarray}\label{eq82}
		&&(\hat{\varphi}^+_{\lambda})'(u_0)=0,\nonumber\\
		&\Rightarrow&\left\langle A(u_0),h\right\rangle+\int_{\Omega}(\xi(z)+\mu)u_0hdz+\int_{\partial\Omega}\beta(z)u_0hd\sigma=\int_{\Omega}\hat{f}^+_{\lambda}(z,u_0)hdz\\
		&&\mbox{for all}\ h\in H^1(\Omega).\nonumber
	\end{eqnarray}
	
	In (\ref{eq82}) we choose $h=-u^-_0\in H^1(\Omega)$. Using (\ref{eq72}) we obtain
	\begin{eqnarray*}
		&&\gamma(u^-_0)+\mu||u^-_0||^2_2=0,\\
		&\Rightarrow&c_0||u^-_0||^2\leq 0\ (\mbox{see (\ref{eq3})}),\\
		&\Rightarrow&u_0\geq 0,\ u_0\neq 0.
	\end{eqnarray*}
	
	Then because of (\ref{eq72}), equation (\ref{eq82}) becomes
	\begin{eqnarray}\label{eq83}
		&&\left\langle A(u_0),h\right\rangle+\int_{\Omega}\xi(z)u_0hdz+\int_{\partial\Omega}\beta(z)u_0hd\sigma=\int_{\Omega}[\lambda u^{q-1}_0+f(z,u_0)]hdz\nonumber\\
		&&\mbox{for all}\ h\in H^1(\Omega),\nonumber\\
		&\Rightarrow&-\Delta u_0(z)+\xi(z)u_0(z)=\lambda u_0(z)^{q-1}+f(z,u_0(z))\ \mbox{for almost all}\ z\in\Omega,\\
		&&\frac{\partial u_0}{\partial n}+\beta(z)u_0=0\ \mbox{on}\ \partial\Omega\ \mbox{(see Papageorgiou and R\u adulescu \cite{19})}.\nonumber
	\end{eqnarray}
	
	As in the proof of Theorem \ref{th8}, from the regularity theory of Wang \cite{28}, we have
	$$u_0\in C_+\backslash\{0\}.$$
	
	Hypotheses $H(f)_3(i),(iii)$, imply that
	$$|f(z,x)|\leq c_{21}|x|\ \mbox{for almost all}\ z\in\Omega,\ \mbox{all}\ |x|\leq ||u_0||_\infty,\ \mbox{some}\ c_{21}>0.$$
	
	Then from (\ref{eq83}) and hypothesis $H(\xi)'$, we have
	\begin{eqnarray}\label{eq84}
		&&\Delta u_0(z)\leq[||\xi^+||_{\infty}+c_{21}]u_0(z)\ \mbox{for almost all}\ z\in\Omega,\nonumber\\
		&\Rightarrow&u_0\in D_+\ (\mbox{by the strong maximum principle}).
	\end{eqnarray}
	
	From (\ref{eq72}) it is clear that
	$$\left.\varphi_{\lambda}\right|_{C_+}=\left.\hat{\varphi}^+_{\lambda}\right|_{C_+}.$$
	
	So, from (\ref{eq84}) it follows that
	\begin{eqnarray*}
		&&u_0\ \mbox{is a local}\ C^1(\overline{\Omega})-\mbox{minimizer of}\ \varphi_{\lambda},\\
		&\Rightarrow&u_0\ \mbox{is a local}\ H^1(\Omega)-\mbox{minimizer of}\ \varphi_{\lambda}\ (\mbox{see Proposition \ref{prop4}}).
	\end{eqnarray*}
	
	Next we will produce a second positive smooth solution.
	
	We know that
	\begin{equation}\label{eq85}
		\hat{\varphi}^+_{\lambda}(u_0)<0<\hat{m}^+_{\lambda}=\inf[\hat{\varphi}^+_{\lambda}(u):||u||=\rho^+_{\lambda}]\ (\mbox{see Proposition \ref{prop14}}).
	\end{equation}
	
	Also, from Propositions \ref{prop12} and \ref{prop13}, we have
	\begin{eqnarray}
		&&\hat{\varphi}^+_{\lambda}\ \mbox{satisfies the C-condition},\label{eq86}\\
		&&\hat{\varphi}^+_{\lambda}(t\hat{u}_1)\rightarrow-\infty\ \mbox{as}\ t\rightarrow+\infty\,.\label{eq87}
	\end{eqnarray}
	
	Then (\ref{eq85}), (\ref{eq86}) and (\ref{eq87}) permit the use of Theorem \ref{th1} (the mountain pass theorem). So, we can find $\hat{u}\in H^1(\Omega)$ such that
	\begin{equation}\label{eq88}
		\hat{u}\in K_{\hat{\varphi}^+_{\lambda}}\ \mbox{and}\ \hat{m}^+_{\lambda}\leq \hat{\varphi}^+_{\lambda}(u).
	\end{equation}
	
	From (\ref{eq85}) and (\ref{eq88}) we have that
	\begin{eqnarray*}
		&&\hat{u}\neq u_0,\ \hat{u}\neq 0,\\
		&\Rightarrow&\hat{u}\in D_+\ \mbox{is a solution of \eqref{eqP} (as before).}
	\end{eqnarray*}
	
	\textit{(b)} Reasoning in a similar fashion, this time using the functional $\hat{\varphi}^-_{\lambda}$ and (\ref{eq73}), we produce two negative smooth solutions
	$$v_0,\hat{v}\in -D,\ v_0\neq\hat{v},$$
	with $v_0$ being a local minimizer of $\varphi_{\lambda}$.
	
	\textit{(c)} Follows from \textsl{(a)} and \textsl{(b)}.
\end{proof}

In fact we can show that problem \eqref{eqP} has extremal constant sign solutions. So, for all $\lambda\in(0,\lambda^+_*)$ there exists a smallest positive solution $u^*_{\lambda}\in D_+$ and for all $\lambda\in(0,\lambda^-_*)$ there is a biggest negative solution $v^*_{\lambda}\in-D_+$.

Let $S^+_{\lambda}$ (respectively $S^-_{\lambda}$) be the set of positive (respectively negative) solutions of problem. From Proposition \ref{prop15} we know that
\begin{eqnarray*}
	&&\emptyset\neq S^+_{\lambda}\subseteq D_+\ \mbox{for all}\ \lambda\in(0,\lambda^+_*),\\
	&&\emptyset\neq S^-_{\lambda}\subseteq D_+\ \mbox{for all}\ \lambda\in(0,\lambda^-_*).
\end{eqnarray*}

Moreover, from Filippakis and Papageorgiou \cite{5} (see Lemmata 4.1 and 4.2), we have that
\begin{itemize}
	\item $S^+_{\lambda}$ is downward directed (that is, if $u_1,u_2\in S^+_{\lambda}$, then there exists $u\in S^+_{\lambda}$ such that $u\leq u_1$ and $u\leq u_2$).
	\item $S^-_{\lambda}$ is upward directed (that is, if $v_1,v_2\in S^-_{\lambda}$, then there exists $v\in S^-_{\lambda}$ such that $v_1\leq v,v_2\leq v$).
\end{itemize}

Note that hypotheses $H(f)_3(i),(iii)$ imply that
\begin{equation}\label{eq89}
	f(z,x)x\geq-c_{22}x^2-c_{23}|x|^{2^*}\ \mbox{for almost all}\ z\in\Omega,\ \mbox{all}\ x\in\RR,\ \mbox{some}\ c_{22},c_{23}>0.
\end{equation}

We may always assume that $c_{22}\geq\mu$ (see (\ref{eq3})). This unilateral growth condition on $f(z,\cdot)$, leads to the following auxiliary Robin problem:
\begin{equation}
	\left\{\begin{array}{ll}
		-\Delta u(z)+\xi(z)u(z)=\lambda|u(z)|^{q-2}u(z)-c_{22}u(z)-c_{23}|u(z)|^{2^*-2}u(z)&\mbox{in}\ \Omega,\\
		\frac{\partial u}{\partial n}+\beta(z)u=0\ \mbox{on}\ \partial\Omega\,.&
	\end{array}\right\}\tag{$Au_{\lambda}$}\label{eqA}
\end{equation}
\begin{prop}\label{prop16}
	If hypotheses $H(\xi)',H(\beta),H(f)_3$ hold and $\lambda>0$, then problem \eqref{eqA} has a unique positive solution
	$$\tilde{u}_{\lambda}\in D_+$$
	and because problem \eqref{eqA} is odd it follows that
	$$\tilde{v}_{\lambda}=-\tilde{u}_{\lambda}\in-D_+$$
	is the unique negative solution of \eqref{eqA}.
\end{prop}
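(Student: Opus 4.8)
The plan is to realize $\tilde u_\lambda$ as the unique global minimizer of a suitably truncated energy functional and then to establish uniqueness by a Díaz--Saá/Picone-type argument that exploits the strict monotonicity of the quotient $\ell(x)/x$, where $\ell(x)=\lambda x^{q-1}-c_{23}x^{2^*-1}$ is the reaction of \eqref{eqA} for $x>0$. For existence of a positive solution, I would rewrite the equation for positive $u$ as $-\Delta u+(\xi(z)+\mu)u=\ell(u)+(\mu-c_{22})u$, truncate the reaction at zero, and consider
$$\psi^+_\lambda(u)=\frac12\gamma(u)+\frac{\mu}{2}||u||_2^2-\frac{\lambda}{q}||u^+||_q^q+\frac{c_{23}}{2^*}||u^+||_{2^*}^{2^*}+\frac{c_{22}-\mu}{2}||u^+||_2^2.$$
By \eqref{eq3} the quadratic part controls $\tfrac{c_0}{2}||u||^2$ from below, the critical term is nonnegative (and $c_{22}\geq\mu$), while since $1<q<2$ the concave term is of lower order; hence $\psi^+_\lambda$ is coercive, and it is sequentially weakly lower semicontinuous (the $L^{2^*}$-term by weak lower semicontinuity of the norm). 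Thus $\psi^+_\lambda$ attains a global minimum at some $\tilde u_\lambda$. Testing with $-\tilde u_\lambda^-$ as in \eqref{eq77} gives $\tilde u_\lambda\geq0$, and because $q<2$ one has $\psi^+_\lambda(t\hat u_1)<0$ for small $t>0$, so $\tilde u_\lambda\neq0$. The regularity argument of Theorem \ref{th8} (the results of Wang \cite{28}), together with hypothesis $H(\xi)'$ and the strong maximum principle, then yield $\tilde u_\lambda\in D_+$.

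The crux is uniqueness. The decisive structural fact is that
$$x\mapsto\frac{\ell(x)}{x}=\lambda x^{q-2}-c_{23}x^{2^*-2}$$
is strictly decreasing on $(0,+\infty)$, since $q-2<0<2^*-2$. Suppose $u_1,u_2\in D_+$ both solve \eqref{eqA}. As each lies in $D_+$ it is bounded below by a positive constant on $\overline{\Omega}$, so $h_1=\frac{u_1^2-u_2^2}{u_1}$ and $h_2=\frac{u_2^2-u_1^2}{u_2}$ are admissible test functions in $H^1(\Omega)$. Inserting $h_1$ into the weak formulation for $u_1$ and $h_2$ into that for $u_2$ and adding, the potential and boundary contributions cancel because $u_1h_1+u_2h_2=0$, leaving on the left the gradient expression $\int_\Omega[(Du_1,Dh_1)_{\RR^N}+(Du_2,Dh_2)_{\RR^N}]dz$, which is nonnegative by the Díaz--Saá (Picone) inequality. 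The right-hand side collapses to
$$\int_\Omega\left[\frac{\ell(u_1)}{u_1}-\frac{\ell(u_2)}{u_2}\right](u_1^2-u_2^2)\,dz,$$
and the strict monotonicity above forces this integrand to be $\leq0$ pointwise, with equality only where $u_1=u_2$. Hence both sides vanish and $u_1=u_2$.

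The main obstacle will be the rigorous execution of this second step: verifying that $h_1,h_2\in H^1(\Omega)$ (using $u_i\in D_+$ and the compactness of $\overline{\Omega}$ to bound $u_i$ away from zero), confirming the exact cancellation of the $(\xi+c_{22})$- and $\beta$-terms, and justifying the Picone-type gradient inequality in the present Robin framework. Finally, since the reaction of \eqref{eqA} is odd in $u$, if $u$ solves \eqref{eqA} then so does $-u$; applying the uniqueness result to $-u$ shows that $\tilde v_\lambda=-\tilde u_\lambda\in-D_+$ is the unique negative solution.
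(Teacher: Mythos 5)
Your proposal is correct, and its existence half is essentially the paper's own argument: direct minimization of a coercive, sequentially weakly lower semicontinuous truncated functional $\psi^+_\lambda$, nontriviality from $\psi^+_\lambda(t\hat{u}_1)<0$ for small $t>0$ (using $q<2$), nonnegativity by testing the Euler equation with $-\tilde{u}_\lambda^-$ and \eqref{eq3}, then Wang's regularity theory and the strong maximum principle to conclude $\tilde{u}_\lambda\in D_+$. Where you genuinely diverge is uniqueness. The paper first invokes the downward directedness of the solution set of \eqref{eqA} (Filippakis and Papageorgiou \cite{5}) to reduce to an \emph{ordered} pair $\bar{u}_\lambda\leq\tilde{u}_\lambda$, then tests each equation against the other solution and subtracts, arriving at
$$\lambda\int_\Omega \tilde{u}_\lambda\bar{u}_\lambda\left[\tilde{u}_\lambda^{q-2}-\bar{u}_\lambda^{q-2}\right]dz \;=\; c_{23}\int_\Omega \tilde{u}_\lambda\bar{u}_\lambda\left[\tilde{u}_\lambda^{2^*-2}-\bar{u}_\lambda^{2^*-2}\right]dz,$$
whose left side is $\leq 0$ and right side is $\geq 0$ \emph{precisely because of the ordering}, forcing $\tilde{u}_\lambda=\bar{u}_\lambda$ (a second alternative in the paper's Remark uses the largest $t>0$ with $t\bar{u}_\lambda\leq\tilde{u}_\lambda$, again relying on \cite{5}). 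Your D\'iaz--Sa\'a/Picone argument needs no ordering at all: since $u_1,u_2\in D_+$ are bounded away from zero on $\overline{\Omega}$, the functions $(u_1^2-u_2^2)/u_1$ and $(u_2^2-u_1^2)/u_2$ are admissible in $H^1(\Omega)$; the $\xi$-, $\beta$- and $c_{22}$-contributions cancel exactly because $u_1h_1+u_2h_2=0$; Picone's identity makes the gradient side nonnegative; and the strict monotonicity of $t\mapsto \lambda t^{q-2}-c_{23}t^{2^*-2}$ (from $q<2<2^*$) makes the reaction side nonpositive, vanishing only where $u_1=u_2$. What each approach buys: the paper's subtraction is more elementary (no Picone inequality) but leans on the directedness lemma of \cite{5}, while yours is self-contained given $u_1,u_2\in D_+$ and transfers to settings where directedness of the solution set is not available. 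The final oddness step, transferring uniqueness to the negative solution, is identical in both.
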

\begin{proof}
	First, we show the existence of a positive solution.
	
	To this end, let $\psi^+_{\lambda}:H^1(\Omega)\rightarrow\RR$ be the $C^1$-functional defined by
	\begin{eqnarray*}
		\psi^+_{\lambda}(u)&=&\frac{1}{2}\gamma(u)+\frac{\mu}{2}||u^-||^2_2+\frac{c_{22}}{2}||u^+||^2_2+\frac{c_{23}}{2}||u^+||^{2^*}_{2^*}-\lambda c_{24}||u||^q\\
		&&\mbox{for some}\ c_{24}>0,\\
		&\geq&\frac{c_0}{2}||u||^2-\lambda c_{24}||u||^q\ (\mbox{recall that}\ c_{22}\geq\mu\ \mbox{and see (\ref{eq3})}),\\
		\Rightarrow\psi^+_{\lambda}(\cdot)&&\mbox{is coercive (recall that $q<2$)}.
	\end{eqnarray*}
	
	Also, $\psi^+_{\lambda}$ is sequentially weakly lower semicontinuous.
	
	So, we can find $\tilde{u}_{\lambda}\in H^1(\Omega)$ such that
	\begin{equation}\label{eq90}
		\psi^+_{\lambda}(\tilde{u}_{\lambda})=\inf[\psi^+_{\lambda}(u):u\in H^1(\Omega)].
	\end{equation}
	
	Since $q<2<2^*$, for $t\in(0,1)$ small we have
	\begin{eqnarray}\label{eq91}
		&&\psi^+_{\lambda}(t\tilde{u}_1)<0=\psi^+_{\lambda}(0),\nonumber\\
		&\Rightarrow&\psi^+_{\lambda}(\tilde{u}_{\lambda})<0=\psi^+_{\lambda}(0)\ (\mbox{see (\ref{eq90})}),\nonumber\\
		&\Rightarrow&\tilde{u}_{\lambda}\neq 0.
	\end{eqnarray}
	
	From (\ref{eq90}) we have
	\begin{eqnarray}\label{eq92}
		&&(\psi^+_{\lambda})'(\tilde{u}_{\lambda})=0,\nonumber\\
		&\Rightarrow&\left\langle A(\tilde{u}_{\lambda}),h\right\rangle+\int_{\Omega}\xi(z)\tilde{u}^+_{\lambda}hdz-
\int_{\Omega}(\xi(z)+\mu)\tilde{u}^-_{\lambda}hdz+\int_{\partial\Omega}\beta(z)\tilde{u}_{\lambda}hd\sigma\nonumber\\
		&&=\lambda c_{24}\int_{\Omega}(\tilde{u}^+_{\lambda})^{q-1}hdz-c_{23}\int_{\Omega}(\tilde{u}^+_{\lambda})^{2^*-1}hdz\ \mbox{for all}\ h\in H^1(\Omega).
	\end{eqnarray}
	
	In (\ref{eq92}) we choose $h=-\tilde{u}^-_{\lambda}\in H^1(\Omega)$. Then
	\begin{eqnarray*}
		&&\gamma(\tilde{u}^-_{\lambda})+\mu||\tilde{u}^-_{\lambda}||^2_2=0,\\
		&\Rightarrow&c_0||\tilde{u}^-_{\lambda}||^2\leq 0\ (\mbox{see (\ref{eq3})}),\\
		&\Rightarrow&\tilde{u}_{\lambda}\geq 0,\tilde{u}_{\lambda}\neq 0\ (\mbox{see (\ref{eq91})}).
	\end{eqnarray*}
	
	Therefore equation (\ref{eq92}) becomes
	\begin{eqnarray}\label{eq93}
		&&\left\langle A(\tilde{u}_{\lambda}),h\right\rangle+\int_{\Omega}\xi(z)\tilde{u}_{\lambda}hdz+\int_{\partial\Omega}\beta(z)\tilde{u}_{\lambda}hd\sigma\nonumber\\
		&&=\lambda\int_{\Omega}\tilde{u}^{q-1}_{\lambda}hdz-c_{22}\int_{\Omega}\tilde{u}_{\lambda}hdz-c_{23}\int_{\Omega}\tilde{u}^{2^*-1}_{\lambda}hdz\ \mbox{for all}\ h\in H^1(\Omega),\nonumber  \\
		&\Rightarrow&-\Delta\tilde{u}_{\lambda}(z)+\xi(z)\tilde{u}_{\lambda}(z)=\nonumber  \\
		&&\lambda\tilde{u}_{\lambda}(z)^{q-1}-c_{22}\tilde{u}_{\lambda}(z)-c_{23}\tilde{u}_{\lambda}(z)^{2^*-1}\ \mbox{for a.a.}\ z\in\Omega,\\
		&&\frac{\partial \tilde{u}_{\lambda}}{\partial n}+\beta(z)\tilde{u}_{\lambda}=0\ \mbox{on}\ \partial\Omega\ (\mbox{see Papageorgiou and R\u adulescu \cite{19}}),\nonumber\\
		&\Rightarrow&\tilde{u}_{\lambda}\in C_+\backslash\{0\}\nonumber
	\end{eqnarray}
	(as before using the regularity theory of Wang \cite{28}).
	
	From (\ref{eq93}) we have
	\begin{eqnarray*}
		&&\Delta\tilde{u}_{\lambda}(z)\leq[||\xi^+||_{\infty}+c_{23}||\tilde{u}_{\lambda}||^{2^*-2}_{\infty}+c_{22}]\tilde{u}_{\lambda}(z)\ \mbox{for almost all}\ z\in\Omega,\\
		&\Rightarrow&\tilde{u}_{\lambda}\in D_+\ \mbox{(by the strong maximum principle)}.
	\end{eqnarray*}
	
	Next we show the uniqueness of this positive solution. So, suppose that $\tilde{u}_{\lambda},\bar{u}_{\lambda}\in D_+$ are two solutions of \eqref{eqA}. The solution set of \eqref{eqA} is downward directed (see \cite{5}). So, we may assume that $\bar{u}_{\lambda}\leq\tilde{u}_{\lambda}$,
	\begin{eqnarray}
		&&\int_{\Omega}(D\tilde{u}_{\lambda},D\bar{u}_{\lambda})_{\RR^N}dz+\int_{\Omega}\xi(z)\tilde{u}_{\lambda}\bar{u}_{\lambda}dz+\int_{\partial\Omega}\beta(z)\tilde{u}_{\lambda}\bar{u}_{\lambda}d\sigma=\lambda\int_{\Omega}\tilde{u}^{q-1}_{\lambda}\bar{u}_{\lambda}dz-\nonumber\\
		&&\hspace{1cm}c_{22}\int_{\Omega}\tilde{u}_{\lambda}\bar{u}_{\lambda}dz-c_{23}\int_{\Omega}\tilde{u}^{2^*-1}_{\lambda}\bar{u}_{\lambda}dz\label{eq94}\\
		&&\int_{\Omega}(D\bar{u}_{\lambda},D\tilde{u}_{\lambda})_{\RR^N}dz+\int_{\Omega}\xi(z)\bar{u}_{\lambda}\tilde{u}_{\lambda}dz+\int_{\Omega}\beta(z)\bar{u}_{\lambda}\tilde{u}_{\lambda}d\sigma=\lambda\int_{\Omega}\bar{u}_{\lambda}^{q-1}\tilde{u}_{\lambda}dz-\nonumber\\
		&&\hspace{1cm}c_{22}\int_{\Omega}\bar{u}_{\lambda}\tilde{u}_{\lambda}dz-
c_{23}\int_{\Omega}\bar{u}^{2^*-1}_{\lambda}\tilde{u}_{\lambda}dz.\label{eq95}
	\end{eqnarray}
	
	We subtract (\ref{eq95}) from (\ref{eq94}) and obtain
	\begin{eqnarray*}
		&&\lambda\int_{\Omega}\tilde{u}_{\lambda}\bar{u}_{\lambda}\left[\frac{1}{\tilde{u}_{\lambda}^{2-q}}-\frac{1}{\bar{u}_{\lambda}^{2-q}}\right]=c_{23}\int_{\Omega}\tilde{u}_{\lambda}\bar{u}_{\lambda}\left(\tilde{u}_{\lambda}^{2^*-2}-\bar{u}^{2^*-2}_{\lambda}\right)dz\\
		&\Rightarrow&\tilde{u}_{\lambda}=\bar{u}_{\lambda}\ (\mbox{recall}\ q<2<2^*\ \mbox{and}\ \bar{u}_{\lambda}\leq\tilde{u}_{\lambda}).
	\end{eqnarray*}
	
	This proves the uniqueness of the positive solution $\tilde{u}_{\lambda}\in D_+$ of \eqref{eqA}.
	
	Problem \eqref{eqA} is odd. So, it follows that
	$$\tilde{u}_{\lambda}=-\tilde{u}_{\lambda}\in-D_+$$
	is the unique negative solution of \eqref{eqA}.
\end{proof}
\begin{remark}
	We present an alternative way of proving the uniqueness of the positive solution $\tilde{u}_{\lambda}\in D_+$ of \eqref{eqA}. So, let $\tilde{u}_{\lambda},\bar{u}_{\lambda}\in D_+$ be two positive solutions of \eqref{eqA}. Let $t>0$ be the biggest positive real such that
	\begin{equation}\label{eq96}
		t\bar{u}_{\lambda}\leq\tilde{u}_{\lambda}
	\end{equation}
	(see Filippakis and Papageorgiou \cite[Lemma 3.6]{5}). Suppose that $t\in(0,1)$. Since $q<2<2^*$, we can find $\hat{\vartheta}>0$ such that the function
	$$x\mapsto\lambda x^{q-1}+(\vartheta-c_{22})x-c_{23}x^{2^*-1}$$
	is increasing on $[0,\max\{||\tilde{u}_{\lambda}||_{\infty},||\bar{u}_{\lambda}||_{\infty}\}]$.

We have
\begin{eqnarray}\label{eq97}
	&&-\Delta\tilde{u}_{\lambda}(z)+(\xi(z)+\hat{\vartheta})\tilde{u}_{\lambda}(z)\nonumber\\
	&=&\lambda \tilde{u}_{\lambda}(z)^{q-1}+(\hat{\vartheta}-c_{22})\tilde{u}_{\lambda}(z)-c_{23}\tilde{u}_{\lambda}(z)^{2^*-1}\nonumber\\
	&\geq&\lambda(t\bar{u}_{\lambda}(z))^{q-1}+(\hat{\vartheta}-c_{22})(t\bar{u}_{\lambda}(z))-c_{23}(t\bar{u}_{\lambda}(z))^{2^*-1}\ (\mbox{see (\ref{eq96})})\nonumber\\
	&\geq&t\left[\lambda\bar{u}_{\lambda}(z)^{q-1}+(\hat{\vartheta}-c_{22})\bar{u}_{\lambda}(z)-c_{23}\bar{u}_{\lambda}(z)^{2^*-1}\right]\nonumber\\
	&&(\mbox{since}\ q<2<2^*\ \mbox{and}\ t\in(0,1))\nonumber\\
	&=&t\left[-\Delta\bar{u}_{\lambda}(z)+(\xi(z)+\hat{\vartheta})\bar{u}_{\lambda}(z)\right]\nonumber\\
	&=&-\Delta(t\bar{u}_{\lambda})(z)+(\xi(z)+\hat{\vartheta})(t\bar{u}_{\lambda})(z)\ \mbox{for almost all}\ z\in\Omega,\nonumber\\
	\Rightarrow&&\Delta(\tilde{u}_{\lambda}-t\bar{u}_{\lambda})(z)\leq(||\xi^+||_{\infty}+\hat{\vartheta})(\tilde{u}_{\lambda}-t\bar{u}_{\lambda})(z)\nonumber  \\
	&&
	\mbox{for almost all}\ z\in\Omega \ \ (\mbox{see hypothesis}\ H(\xi)').
\end{eqnarray}

Note that $\tilde{u}_{\lambda}\not\equiv t\bar{u}_{\lambda}$. Indeed, if $\tilde{u}_{\lambda}\equiv t\bar{u}_{\lambda}$, then
\begin{eqnarray*}
	&&\lambda(t\bar{u}_{\lambda})(z)^{q-1}-c_{22}(t\bar{u}_{\lambda})(z)-c_{23}(t\bar{u}_{\lambda})(z)^{2^*-1}\\
	&>&t\left[-\Delta\bar{u}_{\lambda}(z)+\xi(z)\bar{u}_{\lambda}(z)\right]\ (\mbox{since}\ q<2<2^*\ \mbox{and}\ t\in(0,1))\\
	&=&-\Delta\tilde{u}_{\lambda}(z)+\xi(z)\tilde{u}_{\lambda}(z)\ (\mbox{since}\ \tilde{u}_{\lambda}\equiv t\bar{u}_{\lambda})\\
	&=&\lambda\tilde{u}_{\lambda}(z)^{q-1}-c_{22}\tilde{u}_{\lambda}(z)-c_{23}\tilde{u}_{\lambda}(z)^{2^*-1}\\
	&=&\lambda(t\bar{u}_{\lambda})(z)^{q-1}-c_{22}(t\bar{u}_{\lambda})(z)-c_{23}(t\bar{u}_{\lambda})(z)^{2^*-1},
\end{eqnarray*}
a contradiction.

Then from (\ref{eq97}) and the strong maximum principle, we have
$$\tilde{u}_{\lambda}-t\bar{u}_{\lambda}\in D_+,$$
which contradicts the maximality of $t>0$. Hence $t\geq 1$ and we have
$$\bar{u}_{\lambda}\leq \tilde{u}_{\lambda}\ (\mbox{see (\ref{eq96})}).$$

Interchanging the roles of $\tilde{u}_{\lambda}$ and $\bar{u}_{\lambda}$ in the above argument, we also have
\begin{eqnarray*}
	&&\tilde{u}_{\lambda}\leq\bar{u}_{\lambda},\\
	&\Rightarrow&\tilde{u}_{\lambda}=\bar{u}_{\lambda}.
\end{eqnarray*}

This proves the uniqueness of the positive solution of \eqref{eqA}.
\end{remark}

Next we show that $\tilde{u}_{\lambda}\in D_+$ (respectively $\tilde{v}_{\lambda}\in -D_+$) is a lower bound (upper bound) for the elements of $S^+_{\lambda}$ (respectively $S^-_{\lambda}$).
\begin{prop}\label{prop17}
	If hypotheses $H(\xi)',H(\beta),H(f)_3$ hold, then
	\begin{itemize}
		\item[(a)] $\tilde{u}_{\lambda}\leq u$ for all $u\in S^+_{\lambda},\ \lambda\in(0,\lambda^+_*)$;
		\item[(b)] $v\leq\tilde{v}_{\lambda}$ for all $v\in S^-_{\lambda},\ \lambda\in(0,\lambda^-_*)$.
	\end{itemize}
\end{prop}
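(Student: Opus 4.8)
The plan is to prove (a) by a truncation-and-minimization argument that realizes $\tilde u_\lambda$ as a solution of \eqref{eqA} trapped inside the order interval $[0,u]$, and then to invoke the uniqueness of the positive solution of \eqref{eqA} furnished by Proposition \ref{prop16}; part (b) will follow from the symmetric construction together with the oddness $\tilde v_\lambda=-\tilde u_\lambda$ recorded in Proposition \ref{prop16}.

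Fix $\lambda\in(0,\lambda^+_*)$ and $u\in S^+_\lambda\subseteq D_+$. First I would introduce the Carath\'eodory truncation of the perturbed reaction of \eqref{eqA}, capped at $u$,
$$e_u(z,x)=\begin{cases}0 & \text{if } x\le 0,\\ \lambda x^{q-1}-c_{23}x^{2^*-1} & \text{if } 0<x\le u(z),\\ \lambda u(z)^{q-1}-c_{23}u(z)^{2^*-1} & \text{if } u(z)<x,\end{cases}$$
together with the $C^1$-functional $\psi_u(v)=\frac12\gamma(v)+\frac{c_{22}}{2}\|v\|_2^2-\int_\Omega E_u(z,v)\,dz$, where $E_u(z,x)=\int_0^x e_u(z,s)\,ds$. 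The decisive gain from capping at $u\in D_+\subseteq L^\infty(\Omega)$ is that $e_u(z,\cdot)$ is bounded, so the almost-critical growth is neutralized and $E_u(z,x)\le c(1+|x|)$; combined with \eqref{eq3} and $c_{22}\ge\mu$ this makes $\psi_u$ coercive, and it is sequentially weakly lower semicontinuous. Hence $\psi_u$ attains its infimum at some $\bar v\in H^1(\Omega)$.

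Next I would extract the properties of $\bar v$. Testing $\psi_u'(\bar v)=0$ with $-\bar v^-$ and using \eqref{eq3} gives $\bar v\ge 0$, exactly as in Proposition \ref{prop16}. Nontriviality comes from the concave term: since $\hat u_1,u\in D_+$, for $t>0$ small one has $t\hat u_1\le u$, so the truncation is inactive there and $\psi_u(t\hat u_1)=\frac{t^2}{2}\gamma(\hat u_1)+\frac{c_{22}}{2}t^2\|\hat u_1\|_2^2-\frac{\lambda}{q}t^q\|\hat u_1\|_q^q+\frac{c_{23}}{2^*}t^{2^*}\|\hat u_1\|_{2^*}^{2^*}$; as $q<2<2^*$ the $t^q$-term dominates, forcing $\psi_u(t\hat u_1)<0=\psi_u(0)$ for small $t$, whence $\bar v\ne 0$. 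The crux is the comparison $\bar v\le u$. I would subtract the Euler equation for $\bar v$ from the equation satisfied by $u\in S^+_\lambda$ (after adding $c_{22}\int_\Omega uh$ to the latter) and test with $h=(\bar v-u)^+\ge 0$. On $\{\bar v>u\}$ the truncation freezes $e_u(z,\bar v)=\lambda u^{q-1}-c_{23}u^{2^*-1}$, so the right-hand side collapses to $\int_{\{\bar v>u\}}[-c_{23}u^{2^*-1}-f(z,u)-c_{22}u](\bar v-u)^+\,dz$, and the bracket is $\le 0$ precisely by the unilateral bound \eqref{eq89}, i.e. $f(z,u)\ge -c_{22}u-c_{23}u^{2^*-1}$ for $u>0$. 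The left-hand side equals $\gamma((\bar v-u)^+)+c_{22}\|(\bar v-u)^+\|_2^2\ge c_0\|(\bar v-u)^+\|^2$ by \eqref{eq3}, which forces $(\bar v-u)^+=0$, that is $\bar v\le u$.

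With $0\le\bar v\le u$ in hand, $e_u(z,\bar v)$ coincides with the genuine reaction of \eqref{eqA}, so $\bar v$ solves \eqref{eqA}; the regularity theory of Wang and the strong maximum principle, as in Proposition \ref{prop16}, give $\bar v\in D_+$, and uniqueness then yields $\bar v=\tilde u_\lambda$, hence $\tilde u_\lambda\le u$. As $u\in S^+_\lambda$ was arbitrary this proves (a). For (b) I would run the mirror construction, truncating the reaction of \eqref{eqA} from below at $v\in S^-_\lambda$ on $[v,0]$, produce the unique negative solution there, and identify it with $\tilde v_\lambda=-\tilde u_\lambda$. The main obstacle throughout is the comparison step, namely pinning down the sign of the frozen right-hand side, which is exactly where $H(f)_3$ enters through \eqref{eq89}; the almost-critical exponent, usually the source of difficulty, is disarmed here by the a priori $L^\infty$-cap coming from $u\in D_+$.
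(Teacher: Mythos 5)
Your proposal is correct and follows essentially the same route as the paper: truncate the auxiliary reaction of \eqref{eqA} at $u\in S^+_{\lambda}$ (your $e_u$ is the paper's $g^+_{\lambda}$ up to the cosmetic reshuffling of the linear term $\mu x$ versus the $\frac{c_{22}}{2}\|\cdot\|_2^2$ shift), minimize the coercive truncated functional, show the minimizer is nontrivial via the concave term, pin it into $[0,u]$ by testing with $-\bar v^-$ and $(\bar v-u)^+$ using \eqref{eq89} and \eqref{eq3}, and conclude $\bar v=\tilde u_{\lambda}$ from the uniqueness in Proposition \ref{prop16}. Your treatment is in fact slightly more careful than the paper's at two points: you justify nontriviality by noting $t\hat u_1\le u$ for small $t$ (so the truncation is inactive there), and you pass through regularity and the strong maximum principle to place $\bar v$ in $D_+$ before invoking uniqueness.
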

\begin{proof}
	\textit{(a)} Let $u\in S^+_{\lambda}$ and let $g^+_{\lambda}:\Omega\times\RR\rightarrow\RR$ be the Carath\'eodory function defined by
	\begin{eqnarray}\label{eq98}
		&&g^+_{\lambda}(z,x)=\left\{\begin{array}{ll}
			0&\mbox{if}\ x<0\\
			\lambda x^{q-1}+(\mu-c_{22})x-c_{23}x^{2^*-1}&\mbox{if}\ 0\leq x\leq u(z)\\
			\lambda u(z)^{q-1}+(\mu-c_{22})u(z)-c_{23}u(z)^{2^*-1}&\mbox{if}\ u(z)<x.
		\end{array}\right.
	\end{eqnarray}
	
	We set $G^+_{\lambda}(z,x)=\int^x_0g^+_{\lambda}(z,s)ds$ and consider the $C^1$-functional $\hat{\psi}^+_{\lambda}:H^1(\Omega)\rightarrow\RR$ defined by
	$$\hat{\psi}^+_{\lambda}(y)=\frac{1}{2}\gamma(y)+\frac{\mu}{2}||y||^2_2-\int_{\Omega}G^+_{\lambda}(z,y)dz\ \mbox{for all}\ y\in H^1(\Omega).$$
	
	From (\ref{eq3}) and (\ref{eq98}) it is clear that $\hat{\psi}^+_{\lambda}$ is coercive. Also, it is sequentially weakly lower semicontinuous. So, we can find $\bar{u}_{\lambda}\in H^1(\Omega)$ such that
	\begin{equation}\label{eq99}
		\hat{\psi}^+_{\lambda}(\bar{u}_{\lambda})=\inf[\hat{\psi}^+_{\lambda}(u):u\in H^1(\Omega)].
	\end{equation}
	
	As before, since $q<2<2^*$, we have
	\begin{eqnarray}\label{eq100}
		&&\hat{\psi}^+_{\lambda}(\bar{u}_{\lambda})<0=\hat{\psi}^+_{\lambda}(0),\nonumber\\
		&\Rightarrow&\bar{u}_{\lambda}\neq 0.
	\end{eqnarray}
	
	From (\ref{eq99}), we have
	\begin{eqnarray}\label{eq101}
		&&(\hat{\psi}^+_{\lambda})'(\bar{u}_{\lambda})=0,\nonumber\\
		&\Rightarrow&\left\langle A(\bar{u}_{\lambda}),h\right\rangle+\int_{\Omega}(\xi(z)+\mu)\bar{u}_{\lambda}hdz+\int_{\partial\Omega}\beta(z)\bar{u}_{\lambda}hd\sigma=\int_{\Omega}g^+_{\lambda}(z,\bar{u}_{\lambda})hdz\\
		&&\mbox{for all}\ h\in H^1(\Omega).\nonumber
	\end{eqnarray}
	
	In (\ref{eq101}) first we choose $h=-\bar{u}^-_{\lambda}\in H^1(\Omega)$. Then
	\begin{eqnarray*}
		&&\gamma(\bar{u}^-_{\lambda})+\mu||\bar{u}^-_{\lambda}||^2_2=0\ (\mbox{see (\ref{eq98})}),\\
		&\Rightarrow&c_0||\bar{u}^-_{\lambda}||^2\leq 0\ (\mbox{see (\ref{eq3})}),\\
		&\Rightarrow&\bar{u}_{\lambda}\geq 0,\ \bar{u}_{\lambda}\neq 0\ (\mbox{see (\ref{eq100})}).
	\end{eqnarray*}
	
	Also in (\ref{eq101}) we choose $h=(\bar{u}_{\lambda}-u)^+\in H^1(\Omega)$. Then
	\begin{eqnarray*}
		&&\left\langle A(\bar{u}_{\lambda}),(\bar{u}_{\lambda}-u)^+\right\rangle+\int_{\Omega}(\xi(z)+\mu)\bar{u}_{\lambda}(\bar{u}_{\lambda}-u)^+dz+\int_{\partial\Omega}\beta(z)\bar{u}_{\lambda}(\bar{u}_{\lambda}-u)^+d\sigma\\
		&=&\int_{\Omega}\left[\lambda u^{q-1}+(\mu-c_{22})u-c_{23}u^{2^*-1}\right](\bar{u}_{\lambda}-u)^+dz\ (\mbox{see (\ref{eq98})})\\
		&\leq&\int_{\Omega}[\lambda u^{q-1}+f(z,u)+\mu u](\bar{u}_{\lambda}-u)^+dz\ (\mbox{see (\ref{eq89})})\\
		&=&\left\langle A(u),(\bar{u}_{\lambda}-u)^+\right\rangle+\int_{\Omega}(\xi(z)+\mu)u(\bar{u}_{\lambda}-u)^+dz+\int_{\partial\Omega}\beta(z)u(\bar{u}_{\lambda}-u)^+d\sigma\\
		&&(\mbox{since}\ u\in S^+_{\lambda}),\\
		\Rightarrow&&\gamma((\bar{u}_{\lambda}-u)^+)+\mu||(\bar{u}_{\lambda}-u)^+||^2_2\leq 0,\\
		\Rightarrow&&c_0||(\bar{u}_{\lambda}-u)||^2\leq 0\ (\mbox{see (\ref{eq3})}),\\
		\Rightarrow&&\bar{u}_{\lambda}\leq u.
	\end{eqnarray*}
	
	So, we have proved that
	\begin{equation}\label{eq102}
		\bar{u}_{\lambda}\in[0,u],\ \bar{u}_{\lambda}\neq 0.
	\end{equation}
	
	On account of (\ref{eq98}) and (\ref{eq102}), equation (\ref{eq101}) becomes
	\begin{eqnarray*}
		&&\left\langle A(\bar{u}_{\lambda}),h\right\rangle+\int_{\Omega}\xi(z)\bar{u}_{\lambda}hdz+\int_{\partial\Omega}\beta(z)\bar{u}_{\lambda}hd\sigma=\int_{\Omega}\left[\lambda\bar{u}_{\lambda}^{q-1}-c_{22}\bar{u}_{\lambda}-c_{23}\bar{u}_{\lambda}^{2^*-1}\right]hdz\\
		&&\mbox{for all}\ h\in H^1(\Omega),\\
		&\Rightarrow&\bar{u}_{\lambda}\ \mbox{is a positive solutions of \eqref{eqA}},\\
		&\Rightarrow&\bar{u}_{\lambda}=\tilde{u}_{\lambda}\in D_+\ (\mbox{see Proposition \ref{prop16}}).
	\end{eqnarray*}
	
	From (\ref{eq102}) we conclude that
	$$\tilde{u}_{\lambda}\leq u\ \mbox{for all}\ u\in S^+_{\lambda}.$$
	
	\textit{(b)} In a similar fashion we show that
	$$v\leq\tilde{v}_{\lambda}\ \mbox{for all}\ v\in S^-_{\lambda}.$$
\end{proof}

Now we are ready to produce extremal constant sign solutions for problem \eqref{eqP}, that is, a smallest element for the set $S^+_{\lambda}$ $(\lambda\in (0,\lambda^+_*))$ and a biggest element for the set $S^-_{\lambda}$ $(\lambda\in(0,\lambda^-_*))$.
\begin{theorem}\label{th18}
	If hypotheses $H(\xi)',H(\beta),H(f)_3$ hold, then
	\begin{itemize}
		\item[(a)] for every $\lambda\in(0,\lambda^+_*)$ problem $S^+_{\lambda}$ has a smallest element $u^*_{\lambda}\in D_+$;
		\item[(b)] for every $\lambda\in(0,\lambda^-_*)$ problem $S^-_{\lambda}$ has a biggest element $v^*_{\lambda}\in-D_+.$
	\end{itemize}
\end{theorem}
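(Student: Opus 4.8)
The plan is to realize the smallest positive solution as a monotone limit of elements of $S^+_\lambda$. I would treat only part (a) in detail; part (b) is entirely symmetric, working with the upward directed set $S^-_\lambda$, the upper barrier $\tilde v_\lambda\in-D_+$ furnished by Proposition \ref{prop17}(b), and increasing sequences.

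First I would assemble the structural facts already at hand: by Proposition \ref{prop15}(a) we have $\emptyset\neq S^+_\lambda\subseteq D_+$ for $\lambda\in(0,\lambda^+_*)$; by Filippakis and Papageorgiou \cite{5} the set $S^+_\lambda$ is downward directed; and by Proposition \ref{prop17}(a) every $u\in S^+_\lambda$ satisfies $\tilde u_\lambda\le u$. Exploiting the separability of $H^1(\Omega)$ together with the downward directedness, I would extract a decreasing sequence $\{u_n\}_{n\ge1}\subseteq S^+_\lambda$ with $\inf_{n\ge1}u_n=\inf S^+_\lambda$ pointwise almost everywhere. Since the sequence is decreasing, $0<\tilde u_\lambda\le u_n\le u_1$ for all $n$, and because $u_1\in D_+\subseteq L^\infty(\Omega)$ this yields the uniform bound $\|u_n\|_\infty\le\|u_1\|_\infty$.

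Next I would establish boundedness in $H^1(\Omega)$. Each $u_n$ satisfies the weak formulation
\[
\langle A(u_n),h\rangle+\int_\Omega\xi(z)u_nh\,dz+\int_{\partial\Omega}\beta(z)u_nh\,d\sigma=\int_\Omega\big[\lambda u_n^{q-1}+f(z,u_n)\big]h\,dz
\]
for all $h\in H^1(\Omega)$. Testing with $h=u_n$, adding $\mu\|u_n\|_2^2$ to both sides and invoking (\ref{eq3}), I obtain $c_0\|u_n\|^2\le\gamma(u_n)+\mu\|u_n\|_2^2=\int_\Omega[\lambda u_n^{q-1}+f(z,u_n)]u_n\,dz+\mu\|u_n\|_2^2$, whose right-hand side is bounded because the $L^\infty$-bound on $u_n$ together with hypothesis $H(f)_3(i)$ controls the integrand uniformly. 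Hence $\{u_n\}$ is bounded in $H^1(\Omega)$, so along a subsequence $u_n\xrightarrow{w}u^*_\lambda$ in $H^1(\Omega)$, $u_n\to u^*_\lambda$ in $L^{\frac{2s}{s-1}}(\Omega)$ and in $L^2(\partial\Omega)$, and $u_n\to u^*_\lambda$ almost everywhere; the decreasing structure forces $u^*_\lambda=\inf S^+_\lambda$ almost everywhere.

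Finally I would pass to the limit: for fixed $h$, weak $H^1$-convergence handles $\langle A(u_n),h\rangle$, while the compact embeddings and the uniform $L^\infty$-bound (which dominates $\lambda u_n^{q-1}+f(z,u_n)$) let me pass to the limit in the remaining terms by dominated convergence, so that $u^*_\lambda$ is a weak solution of \eqref{eqP}. The uniform lower barrier gives $\tilde u_\lambda\le u^*_\lambda$, hence $u^*_\lambda\neq0$; the regularity argument of Theorem \ref{th8} (via Wang \cite{28}) and the strong maximum principle, exactly as in Proposition \ref{prop15}, then give $u^*_\lambda\in D_+$, so $u^*_\lambda\in S^+_\lambda$. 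Since by construction $u^*_\lambda=\inf S^+_\lambda$, it is the smallest element of $S^+_\lambda$. I expect the main obstacle to be precisely this nontriviality step: without the a priori positive barrier $\tilde u_\lambda$ from Proposition \ref{prop17}, the monotone limit could collapse to $0$. It is this lower bound, rather than the (here routine) compactness secured by the $L^\infty$-estimates, that guarantees the extremal solution survives in the limit.
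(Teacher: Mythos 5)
Your proposal is correct and follows essentially the same route as the paper's proof: extract a decreasing sequence from the downward directed set $S^+_{\lambda}$ realizing $\inf S^+_{\lambda}$, pass to the weak limit in the weak formulation, and use the lower barrier $\tilde{u}_{\lambda}\leq u_n$ from Proposition \ref{prop17} to keep the limit away from zero. In fact you supply details the paper leaves implicit (the paper simply asserts ``Evidently, $\{u_n\}_{n\geq1}\subseteq H^1(\Omega)$ is bounded'' and glosses the limit passage in the reaction term), whereas your uniform $L^\infty$-bound $0<\tilde{u}_{\lambda}\leq u_n\leq u_1$ cleanly justifies both the $H^1$-bound and the dominated-convergence step.
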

\begin{proof}
	\textit{(a)} Recall that $S^+_{\lambda}$ $(\lambda\in(0,\lambda^+_*))$ is downward directed. So, invoking Lemma 3.10 of Hu and Papageorgiou \cite[p. 178]{9}, we can find a decreasing sequence $\{u_n\}_{n\geq 1}\subseteq S^+_{\lambda}$ such that
	$$\inf S^+_{\lambda}=\inf\limits_{n\geq 1}u_n.$$
	
	Evidently, $\{u_n\}_{n\geq 1}\subseteq H^1(\Omega)$ is bounded. So, we may assume that
	\begin{equation}\label{eq103}
		u_n\stackrel{w}{\rightarrow}u^*_{\lambda}\ \mbox{in}\ H^1(\Omega)\ \mbox{and}\ u_n\rightarrow u^*_{\lambda}\ \mbox{in}\ L^{\frac{2s}{s-1}}(\Omega)\ \mbox{and in}\ L^2(\partial\Omega).
	\end{equation}
	
	We have
	\begin{eqnarray}\label{eq104}
		&&\left\langle A(u_n),h\right\rangle+\int_{\Omega}\xi(z)u_nhdz+\int_{\partial\Omega}\beta(z)u_nhd\sigma=\lambda\int_{\Omega}u^{q-1}_nhdz+\int_{\Omega}f(z,u_n)hdz\\
		&&\mbox{for all}\ h\in H^1(\Omega),\ \mbox{all}\ n\in\NN.\nonumber
	\end{eqnarray}
	
	In (\ref{eq104}) we pass to the limit as $n\rightarrow\infty$ and use (\ref{eq103}). We obtain
	\begin{eqnarray}\label{eq105}
		&&\left\langle A(u^*_{\lambda}),h\right\rangle+\int_{\Omega}\xi(z)u^*_{\lambda}hdz+\int_{\partial\Omega}\beta(z)u^*_{\lambda}hd\sigma=\nonumber  \\
		&&
		\lambda\int_{\Omega}(u^*_{\lambda})^{q-1}hdz+\int_{\Omega}f(z,u^*_{\lambda})hdz\\
		&&\mbox{for all}\ h\in H^1(\Omega).\nonumber
	\end{eqnarray}
	
	Also we have
	\begin{eqnarray}\label{eq106}
		&&\tilde{u}_{\lambda}\leq u_n\ \mbox{for all}\ n\in\NN\ (\mbox{see Proposition \ref{prop17}}),\nonumber\\
		&\Rightarrow&\tilde{u}_{\lambda}\leq u^*_{\lambda}\ (\mbox{see (\ref{eq103})}).
	\end{eqnarray}
	
	Then from (\ref{eq105}) and (\ref{eq106}) we infer that
	$$u^*_{\lambda}\in S^+_{\lambda}\ \mbox{and}\ u^*_{\lambda}=\inf S^+_{\lambda}.$$
	
	\textit{(b)} Similarly we produce $v^*_{\lambda}\in-D_+$ the biggest element of $S^-_{\lambda}$.
\end{proof}

Using these extremal constant sign solutions of \eqref{eqP}, we can generate nodal (that is, sign changing) solutions.
\begin{prop}\label{prop19}
	If hypotheses $H(\xi)',H(\beta),H(f)_3$ hold and $\lambda\in(0,\lambda_*)$ (recall $\lambda_*=\min\{\lambda^+_{\lambda},\lambda^-_*\}$), then problem \eqref{eqP} admits a nodal solution $\hat{y}\in C^1(\overline{\Omega})\backslash\{0\}$.
\end{prop}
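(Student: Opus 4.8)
The plan is to trap a sign-changing solution inside the order interval $[v^*_\lambda,u^*_\lambda]$ bounded by the extremal constant-sign solutions of Theorem \ref{th18}, and then to separate it from $0,u^*_\lambda,v^*_\lambda$ by Morse-theoretic information. Writing $e_\lambda(z,x)=\lambda|x|^{q-2}x+f(z,x)$ for the full reaction of \eqref{eqP}, I would first truncate $e_\lambda$ at the two extremal solutions, setting
$$k_\lambda(z,x)=\left\{\begin{array}{ll} e_\lambda(z,v^*_\lambda(z))+\mu v^*_\lambda(z)&\mbox{if}\ x<v^*_\lambda(z),\\ e_\lambda(z,x)+\mu x&\mbox{if}\ v^*_\lambda(z)\le x\le u^*_\lambda(z),\\ e_\lambda(z,u^*_\lambda(z))+\mu u^*_\lambda(z)&\mbox{if}\ u^*_\lambda(z)<x,\end{array}\right.$$
together with its one-sided analogues $k^\pm_\lambda$ (which in addition truncate at $0$), and I would attach to them the $C^1$-functionals $\sigma_\lambda,\sigma^\pm_\lambda:H^1(\Omega)\to\RR$ defined, as for $\hat\varphi^\pm_\lambda$, by $\frac{1}{2}\gamma(u)+\frac{\mu}{2}\|u\|^2_2-\int_\Omega K_\lambda(z,u)dz$ with $K_\lambda,K^\pm_\lambda$ the primitives. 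Because the truncation is two-sided and bounded, (\ref{eq3}) renders all three functionals coercive and sequentially weakly lower semicontinuous, so each has a global minimizer and (being coercive) satisfies the C-condition.

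Next I would show that every critical point of $\sigma_\lambda$ lies in $[v^*_\lambda,u^*_\lambda]$ and solves \eqref{eqP}: testing $\sigma_\lambda'(u)=0$ with the functions $(u-u^*_\lambda)^+$ and $(v^*_\lambda-u)^+$, using that $u^*_\lambda,v^*_\lambda\in C^1(\overline\Omega)$ are themselves solutions, and invoking (\ref{eq3}), forces $v^*_\lambda\le u\le u^*_\lambda$, whereupon the truncations are inactive. The analogous computation for $\sigma^+_\lambda$ shows its nontrivial critical points are positive solutions lying in $[0,u^*_\lambda]$, hence elements of $S^+_\lambda$, hence equal to $u^*_\lambda=\inf S^+_\lambda$; since $q<2$ gives $\sigma^+_\lambda(t\hat u_1)<0$ for small $t>0$, the global minimizer of $\sigma^+_\lambda$ is nonzero and therefore equals $u^*_\lambda$. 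As $u^*_\lambda\in D_+$, every $u$ close to $u^*_\lambda$ in $C^1(\overline\Omega)$ is strictly positive, so $\sigma_\lambda$ and $\sigma^+_\lambda$ agree there; consequently $u^*_\lambda$ is a local $C^1(\overline\Omega)$-minimizer of $\sigma_\lambda$, hence a local $H^1(\Omega)$-minimizer by Proposition \ref{prop4}. Working with $\sigma^-_\lambda$ gives the same for $v^*_\lambda$. In particular $C_k(\sigma_\lambda,u^*_\lambda)=C_k(\sigma_\lambda,v^*_\lambda)=\delta_{k,0}\ZZ$ for all $k\in\NN_0$.

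With two local minimizers of the coercive, C-satisfying functional $\sigma_\lambda$ available, I would apply the mountain pass theorem (Theorem \ref{th1}) between $u^*_\lambda$ and $v^*_\lambda$ to obtain a further critical point $\hat y\in K_{\sigma_\lambda}$; being of mountain-pass type between two local minimizers, it satisfies $C_1(\sigma_\lambda,\hat y)\neq0$ (see Motreanu, Motreanu and Papageorgiou \cite{15}). Comparing critical groups, $\hat y\neq u^*_\lambda$ and $\hat y\neq v^*_\lambda$ because those carry $C_1=0$; by the previous step $\hat y\in[v^*_\lambda,u^*_\lambda]$ solves \eqref{eqP} and, by the regularity theory of Wang \cite{28}, $\hat y\in C^1(\overline\Omega)$. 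The extremality of $u^*_\lambda$ and $v^*_\lambda$ then prevents $\hat y$ from having constant sign: a nonzero $\hat y\ge0$ would lie in $S^+_\lambda$ and satisfy $u^*_\lambda\le\hat y\le u^*_\lambda$, i.e. $\hat y=u^*_\lambda$, and symmetrically for $\hat y\le0$; thus $\hat y$ is nodal once it is shown to be nonzero.

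The remaining and decisive point is to exclude $\hat y=0$, which I would do by computing $C_k(\sigma_\lambda,0)=0$ for all $k\in\NN_0$ (assuming, as usual, that $K_{\sigma_\lambda}$ is finite, since otherwise \eqref{eqP} already has infinitely many solutions). This is where the concavity $1<q<2$ is essential: near $0$ the truncations are inactive, so $K_\lambda(z,x)=\frac{\lambda}{q}|x|^q+F(z,x)+\frac{\mu}{2}x^2$, and since $H(f)_3(iii)$ forces $|F(z,x)|\le c\,x^2$ near the origin, one gets, for every fixed $u\neq0$, $\sigma_\lambda(tu)=-\frac{\lambda}{q}t^q\|u\|^q_q+O(t^2)<0$ for all small $t>0$ (the set where $tu$ leaves the order interval has measure and $L^q$-mass tending to $0$ as $t\to0^+$, so it contributes only a lower-order error). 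This dominance of the concave term lets me build a radial deformation collapsing a small punctured ball onto a negative sublevel set, yielding the triviality of the critical groups of $\sigma_\lambda$ at $0$. Since then $C_1(\sigma_\lambda,0)=0$ while $C_1(\sigma_\lambda,\hat y)\neq0$, we conclude $\hat y\neq0$, and $\hat y\in C^1(\overline\Omega)\setminus\{0\}$ is the desired nodal solution. I expect this last critical-group computation at the origin — making rigorous both the lower-order control of the truncated region and the accompanying deformation — to be the main obstacle.
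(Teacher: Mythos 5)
Your proposal is essentially the paper's own proof: the same two-sided truncation of the reaction at $v^*_{\lambda},u^*_{\lambda}$ (the paper's $k_{\lambda}$ and functionals $\eta_{\lambda},\eta^{\pm}_{\lambda}$), the same identification $K_{\eta_{\lambda}}\subseteq[v^*_{\lambda},u^*_{\lambda}]\cap C^1(\overline{\Omega})$ and $K_{\eta^{\pm}_{\lambda}}=\{0,u^*_{\lambda}\},\{0,v^*_{\lambda}\}$ via extremality, the same promotion of $u^*_{\lambda},v^*_{\lambda}$ to local $H^1(\Omega)$-minimizers through Proposition \ref{prop4}, the same mountain pass between them, and the same exclusion of $\hat{y}=0$ by comparing $C_1(\eta_{\lambda},\hat{y})\neq0$ with vanishing critical groups at the origin (the paper separates $\hat{y}$ from $u^*_{\lambda},v^*_{\lambda}$ by energy levels rather than by critical groups, an immaterial difference). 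The one step you flag as the main obstacle, $C_k(\eta_{\lambda},0)=0$ for all $k\in\NN_0$, is exactly what the paper dispatches by citing Lemma 3.4 of D'Agui, Marano and Papageorgiou \cite{3}, so you can close it by citation; if you insist on proving it directly, your ray-wise negativity $\eta_{\lambda}(tu)<0$ is not by itself enough --- you also need transversality of the zero-level crossings along rays (so that the crossing time depends continuously on $u$ and the radial deformation is well defined), and when applying the mountain pass theorem between two possibly non-strict local minimizers you need the standard dichotomy argument of Aizicovici, Papageorgiou and Staicu \cite{1}, which the paper invokes at (\ref{eq112}).
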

\begin{proof}
	Let $u^*_{\lambda}\in D_+$ and $v^*_{\lambda}\in-D_+$ be the two extremal constant sign solutions of \eqref{eqP} produced in Theorem \ref{th18}. Using them we introduce the following Carath\'eodory function
	{\tiny
	\begin{eqnarray}\label{eq107}
		k_{\lambda}(z,x)=\left\{\begin{array}{ll}
			\lambda|v^*_{\lambda}(z)|^{q-2}v^*_{\lambda}(z)+f(z,v^*_{\lambda}(z))+\mu v^*_{\lambda}(z)&\mbox{if}\ x<v^*_{\lambda}(z)\\
			\lambda|x|^{q-2}x+f(z,x)+\mu x&\mbox{if}\ v^*_{\lambda}(z)\leq x\leq u^*_{\lambda}(z)\\
			\lambda u^*_{\lambda}(z)^{q-1}+f(z,u^*_{\lambda}(z))+\mu u^*_{\lambda}(z)&\mbox{if}\ u^*_{\lambda}(z)<x.
		\end{array}\right.
	\end{eqnarray}
	}
	
	Let $K_{\lambda}(z,x)=\int^x_0k_{\lambda}(z,s)ds$ and consider the $C^1$-functional $\eta_{\lambda}:H^1(\Omega)\rightarrow\RR$ defined by
	$$\eta_{\lambda}(u)=\frac{1}{2}\gamma(u)+\frac{\mu}{2}||u||^2_2-\int_{\Omega}K_{\lambda}(z,u)dz\ \mbox{for all}\ u\in H^1(\Omega).$$
	
	Also we consider the positive and negative truncations of $k_{\lambda}(z,\cdot)$ (that is, $k^{\pm}_{\lambda}(z,x)=k_{\lambda}(z,\pm x^{\pm})$), set $K^{\pm}_{\lambda}(z,x)=\int^x_0k^{\pm}_{\lambda}(z,s)ds$ and consider the corresponding $C^1$-functionals $\eta^{\pm}_{\lambda}:H^1(\Omega)\rightarrow\RR$ defined by
	$$\eta^{\pm}_{\lambda}(u)=\frac{1}{2}\gamma(u)+\frac{\mu}{2}||u||^2_2-\int_{\Omega}K^{\pm}_{\lambda}(z,u)dz\ \mbox{for all}\ u\in H^1(\Omega).$$
	
	\begin{claim}\label{cl1}
		$K_{\eta_{\lambda}}\subseteq[v^*_{\lambda},u^*_{\lambda}]\cap C^1(\overline{\Omega}),\ K_{\eta^+_{\lambda}}=\{0,u^*_{\lambda}\},\ K_{\eta^-_{\lambda}}=\{0,v^*_{\lambda}\}$.
	\end{claim}
	
	Let $u\in K_{\eta_{\lambda}}$. We have
	\begin{equation}\label{eq108}
		\left\langle A(u),h\right\rangle+\int_{\Omega}(\xi(z)+\mu)uhdz+\int_{\partial\Omega}\beta(z)uhd\sigma=\int_{\Omega}k_{\lambda}(z,u)hdz\ \mbox{for all}\ h\in H^1(\Omega).
	\end{equation}
	
	In (\ref{eq108}) we choose $h=(u-u^*_{\lambda})^+\in H^1(\Omega)$. Then
	\begin{eqnarray*}
		&&\left\langle A(u),(u-u^*_{\lambda})^+\right\rangle+\int_{\Omega}(\xi(z)+\mu)u(u-u^*_{\lambda})^+dz+\int_{\partial\Omega}\beta(z)u(u-u^*_{\lambda})^+d\sigma\\
		&=&\int_{\Omega}[\lambda(u^*_{\lambda})^{q-1}+f(z,u^*_{\lambda})+\mu u^*_{\lambda}](u-u^*_{\lambda})^+dz\ (\mbox{see (\ref{eq107})})\\
		&=&\left\langle A(u^*_{\lambda}),(u-u^*_{\lambda})^+\right\rangle+\int_{\Omega}(\xi(z)+\mu)u^*_{\lambda}(u-u^*_{\lambda})^+dz+\int_{\partial\Omega}\beta(z)u^*_{\lambda}(u-u^*_{\lambda})^+d\sigma\\
		&&(\mbox{since}\ u^*_{\lambda}\in S^+_{\lambda}),\\
		&\Rightarrow&u\leq u^*_{\lambda}.
	\end{eqnarray*}
	
	Similarly, choosing $h=(v^*_{\lambda}-u)^+\in H^1(\Omega)$ in (\ref{eq108}), we show that
	$$v^*_{\lambda}\leq u.$$
	
	So, from the above and the regularity theory of Wang \cite{28}, we have
	\begin{eqnarray*}
		&&u\in[v^*_{\lambda},u^*_{\lambda}]\cap C^1(\overline{\Omega}),\\
		&\Rightarrow&K_{\eta_{\lambda}}\subseteq[v^*_{\lambda},u^*_{\lambda}]\cap C^1(\overline{\Omega}).
	\end{eqnarray*}
	
	In a similar fashion, we show that
	$$K_{\eta^+_{\lambda}}\subseteq[0,u^*_{\lambda}]\ \mbox{and}\ K_{\eta^-_{\lambda}}\subseteq[v^*_{\lambda},0].$$
	
	The extremality of $u^*_{\lambda}\in D_+$ and of $v^*_{\lambda}\in-D_+$, implies that
	$$K_{\eta^+_{\lambda}}=\{0,u^*_{\lambda}\}\ \mbox{and}\ K_{\eta^-_{\lambda}}=\{0,v^*_{\lambda}\}.$$
	
	This proves Claim \ref{cl1}.
	\begin{claim}\label{cl2}
		$u^*_{\lambda}\in D_+$ and $v^*_{\lambda}\in -D_+$ are local minimizers of $\eta_{\lambda}$.
	\end{claim}
	
	Evidently, $\eta^+_{\lambda}$ is coercive (see (\ref{eq107})) and sequentially weakly lower semicontinuous. So, we can find $\tilde{u}^*_{\lambda}\in H^1(\Omega)$ such that
	\begin{equation}\label{eq109}
		\eta^+_{\lambda}(\tilde{u}^*_{\lambda})=\inf[\eta^+_{\lambda}(u):u\in H^1(\Omega)].
	\end{equation}
	
	As before, since $q<2<2^*$, we have
	\begin{eqnarray}\label{eq110}
		&&\eta^+_{\lambda}(\tilde{u}^*_{\lambda})<0=\eta^+_{\lambda}(0)\nonumber\\
		&\Rightarrow&\tilde{u}^*_{\lambda}\neq 0.
	\end{eqnarray}
	
	From (\ref{eq109}) we have that
	\begin{equation}\label{eq111}
		\tilde{u}^*_{\lambda}\in K_{\eta^+_{\lambda}}.
	\end{equation}
	
	From (\ref{eq110}), (\ref{eq111}) and Claim \ref{cl1} it follows that $\tilde{u}^*_{\lambda}=u^*_{\lambda}\in D_+$. Note that
	\begin{eqnarray*}
		&&\left.\eta_{\lambda}\right|_{C_+}=\left.\eta^+_{\lambda}\right|_{C_+}\ (\mbox{see (\ref{eq107})}),\\
		&\Rightarrow&u^*_{\lambda}\ \mbox{is a local}\ C^1(\overline{\Omega})-\mbox{minimizer of}\ \eta_{\lambda},\\
		&\Rightarrow&u^*_{\lambda}\ \mbox{is a local}\ H^1(\overline{\Omega})-\mbox{minimizer of}\ \eta_{\lambda}\ (\mbox{see Proposition \ref{prop4}}).
	\end{eqnarray*}
	
	Similarly for $v^*_{\lambda}\in-D_+$, using this time the functional $\eta^-_{\lambda}$.
		This proves Claim \ref{cl2}.
	
	Without any loss of generality, we may assume that
	$$\eta_{\lambda}(v^*_{\lambda})\leq\eta_{\lambda}(u^*_{\lambda}).$$
	
	The reasoning is similar if the opposite inequality holds. Also, we assume that $K_{\eta_{\lambda}}$ if finite or on account of Claim \ref{cl1} we already have an infinity of smooth nodal solutions and so we are done. Then Claim \ref{cl2} implies that we can find $\rho\in(0,1)$ small such that
	\begin{equation}\label{eq112}
		\eta_{\lambda}(v^*_{\lambda})\leq\eta_{\lambda}(u^*_{\lambda})<\inf[\eta_{\lambda}(u):||u-u^*_{\lambda}||=\rho]=m_{\lambda},
\ ||v^*_{\lambda}-u^*_{\lambda}||>\rho
	\end{equation}
	(see Aizicovici, Papageorgiou and Staicu \cite{1}, proof of Proposition 29).
	
	From (\ref{eq3}) and (\ref{eq107}) it is clear that $\eta_{\lambda}$ is coercive. Hence
	\begin{equation}\label{eq113}
		\eta_{\lambda}\ \mbox{satisfies the C-condition}.
	\end{equation}
	
	Then (\ref{eq112}) and (\ref{eq113}) permit the use of Theorem \ref{th1} (the mountain pass theorem). So, we can find $\hat{y}\in H^1(\Omega)$ such that
	\begin{equation}\label{eq114}
		\hat{y}\in K_{\eta_{\lambda}}\ \mbox{and}\ m_{\lambda}\leq\eta_{\lambda}(\hat{y}).
	\end{equation}
	
	Claim \ref{cl1} together with (\ref{eq112}) and (\ref{eq114}) imply that
	$$\hat{y}\in[v^*_{\lambda},u^*_{\lambda}]\cap C^1(\overline{\Omega}),\ \hat{y}\not\in\{u^*_{\lambda},v^*_{\lambda}\}.$$
	
	Since $\hat{y}$ is a critical point of $\eta_{\lambda}$ of mountain pass type, we have
	\begin{equation}\label{eq115}
		C_1(\eta_{\lambda},\hat{y})\neq 0
	\end{equation}
	(see Motreanu, Motreanu and Papageorgiou \cite[Corollary 6.81, p. 168]{15}).
	
	On the other hand, the presence of the concave term $\lambda|x|^{q-2}x\ (q<2)$ in the reaction function, hypothesis $H(f)_3(iii)$ and Lemma 3.4 of D'Agui, Marano and Papageorgiou \cite{3} imply that
	\begin{equation}\label{eq116}
		C_k(\eta_{\lambda},0)=0\ \mbox{for all}\ k\in\NN_0.
	\end{equation}
	
	Comparing (\ref{eq115}) and (\ref{eq116}), we conclude that
	\begin{eqnarray*}
		&&\hat{y}\neq 0,\\
		&\Rightarrow&\hat{y}\in C^1(\overline{\Omega})\backslash\{0\}\ \mbox{is nodal}.
	\end{eqnarray*}
\end{proof}

So, we can state the following multiplicity theorem for problem \eqref{eqP}.
\begin{theorem}\label{th20}
	If hypotheses $H(\xi)',H(\beta),H(f)_3$ hold, then we can find a parameter value $\lambda_*>0$ such that for every $\lambda\in(0,\lambda_*)$ problem \eqref{eqP} has at least five nontrivial smooth solutions
	\begin{eqnarray*}
		&&u_0,\hat{u}\in D_+,\ v_0,\hat{v}\in-D_+,\\
		&&\hat{y}\in C^1(\overline{\Omega})\ \mbox{nodal}.
	\end{eqnarray*}
\end{theorem}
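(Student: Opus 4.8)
The plan is to obtain Theorem \ref{th20} as a synthesis of the two multiplicity results already established in this section, so very little new work is required. First I would fix $\lambda_* = \min\{\lambda^+_*,\lambda^-_*\}$, where $\lambda^\pm_*>0$ are the thresholds produced in Proposition \ref{prop14}; this choice guarantees that for $\lambda\in(0,\lambda_*)$ both the constant-sign machinery of Proposition \ref{prop15} and the extremal/nodal machinery of Theorem \ref{th18} and Proposition \ref{prop19} are simultaneously available.

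For such $\lambda$, Proposition \ref{prop15}(c) directly yields the four constant-sign solutions $u_0,\hat{u}\in D_+$ and $v_0,\hat{v}\in -D_+$, with $u_0,v_0$ local minimizers of $\varphi_\lambda$ (via the Weierstrass--Tonelli theorem applied to the truncated functionals $\hat{\varphi}^{\pm}_\lambda$) and $\hat{u},\hat{v}$ the corresponding mountain pass solutions. All four lie in $C^1(\overline{\Omega})$ through the regularity theory of Wang \cite{28}, exactly as in the proof of Theorem \ref{th8}, and they are pairwise distinct: $u_0\neq\hat{u}$ and $v_0\neq\hat{v}$ by the energy separation in Proposition \ref{prop15}, while positive and negative solutions are trivially distinct.

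Next I would invoke Proposition \ref{prop19} to supply the fifth solution $\hat{y}\in C^1(\overline{\Omega})\backslash\{0\}$, which is nodal and lies in the order interval $[v^*_\lambda,u^*_\lambda]$ determined by the extremal constant-sign solutions of Theorem \ref{th18}. Its distinctness from the previous four is immediate from its sign-changing character: a nodal function cannot coincide with any element of $D_+$ or $-D_+$. Thus the five solutions are genuinely distinct and all smooth, which is precisely the assertion of the theorem.

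The proof therefore carries no new analytic obstacle, since the substantive work was already absorbed into Propositions \ref{prop14}, \ref{prop15} and \ref{prop19}; the only point demanding care is confirming that $\hat{y}$ is a \emph{fifth} solution rather than a relabelling of an earlier one. This is secured not merely by its sign change but, at the Morse-theoretic level, by the comparison $C_1(\eta_\lambda,\hat{y})\neq 0$ against $C_k(\eta_\lambda,0)=0$ for all $k\in\NN_0$ used in the proof of Proposition \ref{prop19}, which rules out $\hat{y}\in\{0,u^*_\lambda,v^*_\lambda\}$ and hence, via the extremality of $u^*_\lambda,v^*_\lambda$, distinguishes it from the four constant-sign solutions.
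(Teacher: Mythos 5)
Your proposal is correct and follows exactly the paper's route: Theorem \ref{th20} is stated there as an immediate synthesis of Proposition \ref{prop15}(c) (the four constant-sign solutions $u_0,\hat{u}\in D_+$, $v_0,\hat{v}\in -D_+$ for $\lambda\in(0,\lambda_*)$, $\lambda_*=\min\{\lambda^+_*,\lambda^-_*\}$) and Proposition \ref{prop19} (the nodal solution $\hat{y}$), with distinctness coming from sign information and the mountain-pass energy separation, just as you argue. One negligible slip in your closing remark: the comparison $C_1(\eta_\lambda,\hat{y})\neq 0$ versus $C_k(\eta_\lambda,0)=0$ only rules out $\hat{y}=0$, while $\hat{y}\notin\{u^*_\lambda,v^*_\lambda\}$ follows from the mountain-pass level inequality $\eta_\lambda(v^*_\lambda)\leq\eta_\lambda(u^*_\lambda)<m_\lambda\leq\eta_\lambda(\hat{y})$ inside the proof of Proposition \ref{prop19}; this does not affect the validity of your deduction.
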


If we strengthen the regularity of $f(z,\cdot)$, we can improve Theorem \ref{th20} and produce a sixth nontrivial smooth solution. However, we do not provide any sign information for this sixth solution.

The new conditions on the perturbation term $f(z,x)$ are the following:

\smallskip
$H(f)_4:$ $f:\Omega\times\RR\rightarrow\RR$ is a measurable function such that for almost all $z\in\Omega,\ f(z,\cdot)\in C^1(\RR)$ and
\begin{itemize}
	\item[(i)] for every $\rho>0$, there exists $a_{\rho}\in L^{\infty}(\Omega)$ such that
	\begin{eqnarray*}
		&&|f'_x(z,x)|\leq a_{\rho}(z)\ \mbox{for almost all}\ z\in\Omega,\ \mbox{all}\ |x|\leq\rho\\
		&\mbox{and}&\lim\limits_{x\rightarrow\pm\infty}\frac{f'_x(z,x)}{|x|^{2^*-2}}=0\ \mbox{uniformly for almost all}\ z\in\Omega;
	\end{eqnarray*}
	\item[(ii)] $\lim\limits_{x\rightarrow\pm\infty}\frac{F(z,x)}{x^2}=+\infty$ uniformly for almost all $z\in\Omega$ and there exists $e\in L^1(\Omega)$ such that
	$$\tau(z,x)\leq\tau(z,y)+e(z)\ \mbox{for almost all}\ z\in\Omega,\ \mbox{all}\ 0\leq x\leq y\ \mbox{and all}\ y\leq x\leq 0$$
	(recall that $F(z,x)=\int^x_0f(z,s)ds$ and $\tau(z,x)=f(z,x)x-2F(z,x)$);
	\item[(iii)] $f(z,0)=0$ for almost all $z\in\Omega$, $f'_x(z,0)=\lim\limits_{x\rightarrow 0}\frac{f(z,x)}{x}$ uniformly for almost all $z\in\Omega$ and
	$$f'_x(\cdot,0)\in L^{\infty}(\Omega),f'_x(z,0)\leq\hat{\lambda}_1\ \mbox{for almost all}\ z\in\Omega,f'_x(\cdot,0)\not\equiv\hat{\lambda}_1.$$
\end{itemize}

Under the above hypotheses, we have $\varphi_{\lambda}\in C^2(H^1(\Omega)\backslash\{0\})$.
\begin{prop}\label{prop21}
	If hypotheses $H(\xi)',H(\beta),H(f)_4$ hold and $\lambda\in(0,\lambda_*)$, then problem \eqref{eqP} admits a sixth nontrivial smooth solution
	$$\tilde{y}\in C^1(\overline{\Omega}).$$
\end{prop}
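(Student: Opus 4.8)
The plan is to use Morse theory, exploiting that under the stronger hypotheses $H(f)_4$ we have $\varphi_\lambda\in C^2(H^1(\Omega)\setminus\{0\})$, and to read off the sixth solution from the Morse relation \eqref{eq6}. Assume $K_{\varphi_\lambda}$ is finite (otherwise problem \eqref{eqP} already has infinitely many smooth solutions and we are done), and suppose, towards a contradiction, that $K_{\varphi_\lambda}=\{0,u_0,\hat u,v_0,\hat v,\hat y\}$, namely the five solutions produced in Theorem \ref{th20} together with the origin. The strategy is to compute every critical group entering \eqref{eq6} and to show that the resulting alternating-sum (Euler--Poincar\'e) identity cannot hold, which forces an extra critical point $\tilde y$.

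First I would record the two ``global'' computations. Since the behaviour of the reaction near $\pm\infty$ is governed by $H(f)_4(ii)$, which coincides with $H(f)_1(ii)$, and the concave term $\lambda|x|^{q-2}x$ is subordinate at infinity, the argument of Proposition \ref{prop6} applies almost verbatim to give $C_k(\varphi_\lambda,\infty)=0$ for all $k\in\NN_0$, hence $P(t,\infty)=0$. At the origin the concave term dominates (as $q<2$), so $\varphi_\lambda(tu)<0$ for all $u\neq0$ and $t>0$ small; invoking Lemma 3.4 of D'Agui, Marano and Papageorgiou \cite{3} exactly as for \eqref{eq116}, this yields $C_k(\varphi_\lambda,0)=0$ for all $k\in\NN_0$.

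Next I would compute the critical groups at the five nontrivial solutions. By Proposition \ref{prop15}, $u_0$ and $v_0$ are local minimizers of $\varphi_\lambda$, so $C_k(\varphi_\lambda,u_0)=C_k(\varphi_\lambda,v_0)=\delta_{k,0}\ZZ$. The remaining three, $\hat u,\hat v,\hat y$, arise as mountain-pass points, so the corresponding truncated functionals have nontrivial first critical group. Because $\hat u\in D_+$ and $\hat v\in-D_+$ lie in open cones, and $\hat y$ lies in the order interval $[v^*_\lambda,u^*_\lambda]$ where $\eta_\lambda$ agrees with $\varphi_\lambda$, I would transfer the mountain-pass information on $C^1(\overline\Omega)$-neighbourhoods (in the spirit of Proposition \ref{prop4}) to conclude $C_1(\varphi_\lambda,\hat u)\neq0$, $C_1(\varphi_\lambda,\hat v)\neq0$ and $C_1(\varphi_\lambda,\hat y)\neq0$. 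Here the $C^2$-regularity is decisive: for a $C^2$ functional an isolated critical point with nontrivial first critical group must satisfy $C_k(\varphi_\lambda,\cdot)=\delta_{k,1}\ZZ$ (see Motreanu, Motreanu and Papageorgiou \cite{15}). Thus $C_k(\varphi_\lambda,\hat u)=C_k(\varphi_\lambda,\hat v)=C_k(\varphi_\lambda,\hat y)=\delta_{k,1}\ZZ$.

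Finally, I would insert these into \eqref{eq6} and evaluate at $t=-1$, where $(1+t)Q(t)$ vanishes, obtaining $\sum_{u\in K_{\varphi_\lambda}}\chi(u)=P(-1,\infty)=0$. The contributions are $\chi(0)=0$, $\chi(u_0)=\chi(v_0)=1$ and $\chi(\hat u)=\chi(\hat v)=\chi(\hat y)=-1$, summing to $-1\neq0$, a contradiction. Hence $K_{\varphi_\lambda}$ must contain a further point $\tilde y\notin\{0,u_0,\hat u,v_0,\hat v,\hat y\}$; since $C_k(\varphi_\lambda,0)=0$ it is nontrivial, and the regularity theory of Wang \cite{28} (as in Theorem \ref{th8}) gives $\tilde y\in C^1(\overline\Omega)$. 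I expect the main obstacle to be the precise identification of the critical groups of the three mountain-pass solutions: one must both justify passing from the truncated functionals to $\varphi_\lambda$ on genuine $C^1(\overline\Omega)$-neighbourhoods and rigorously apply the $C^2$-rigidity that upgrades $C_1\neq0$ to $C_k=\delta_{k,1}\ZZ$.
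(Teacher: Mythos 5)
Your overall strategy is exactly the paper's: assume $K_{\varphi_\lambda}=\{0,u_0,v_0,\hat u,\hat v,\hat y\}$, compute $C_k(\varphi_\lambda,u_0)=C_k(\varphi_\lambda,v_0)=\delta_{k,0}\ZZ$, $C_k(\varphi_\lambda,\hat u)=C_k(\varphi_\lambda,\hat v)=C_k(\varphi_\lambda,\hat y)=\delta_{k,1}\ZZ$, $C_k(\varphi_\lambda,0)=0$, $C_k(\varphi_\lambda,\infty)=0$, and evaluate the Morse relation \eqref{eq6} at $t=-1$ to get the contradiction $2-2-1=0$. Your arithmetic and the treatment of $u_0,v_0,0,\infty$ match the paper, and for $\hat u\in D_+$, $\hat v\in -D_+$ the transfer of critical groups is indeed routine: $D_+$ is open in $C^1(\overline\Omega)$, $\varphi_\lambda|_{C_+}=\hat\varphi^+_\lambda|_{C_+}$, so the critical groups relative to $C^1(\overline\Omega)$ coincide, and Palais' theorem plus the $C^2$-rigidity of mountain-pass points (Corollary 6.102 of \cite{15}) gives $\delta_{k,1}\ZZ$.

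The genuine gap is at $\hat y$. You argue that ``$\hat y$ lies in the order interval $[v^*_\lambda,u^*_\lambda]$ where $\eta_\lambda$ agrees with $\varphi_\lambda$,'' but agreement of two functionals on a set with empty interior gives no relation between their critical groups: from Proposition \ref{prop19} you only know $\hat y\in[v^*_\lambda,u^*_\lambda]\cap C^1(\overline\Omega)$, and a priori $\hat y$ may touch $u^*_\lambda$ or $v^*_\lambda$ (e.g. $\hat y(z_0)=u^*_\lambda(z_0)$ at some $z_0$), in which case \emph{no} $C^1(\overline\Omega)$-neighbourhood of $\hat y$ is contained in the order interval and the identity $C_k(\eta_\lambda,\hat y)=C_k(\varphi_\lambda,\hat y)$ cannot be inferred. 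This is precisely where the paper inserts a separate argument (its relation (\ref{eq121})): using hypotheses $H(f)_4(i),(iii)$ one finds $\hat\xi>0$ such that $x\mapsto f(z,x)+\hat\xi x$ is nondecreasing on $[-\hat m,\hat m]$ with $\hat m=\max\{\|u_0\|_\infty,\|\hat u\|_\infty,\|v_0\|_\infty,\|\hat v\|_\infty\}$; then, since $\hat y\le u^*_\lambda$,
$$-\Delta\hat y+(\xi(z)+\hat\xi)\hat y\le -\Delta u^*_\lambda+(\xi(z)+\hat\xi)u^*_\lambda\ \mbox{a.e. in }\Omega,$$
and the strong maximum principle yields $u^*_\lambda-\hat y\in D_+$, and similarly $\hat y-v^*_\lambda\in D_+$, so that $\hat y\in{\rm int}_{C^1(\overline{\Omega})}[v^*_\lambda,u^*_\lambda]$. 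Only after this step does the $\eta_\lambda$-to-$\varphi_\lambda$ transfer (via Palais) and the $C^2$-rigidity apply to give $C_k(\varphi_\lambda,\hat y)=\delta_{k,1}\ZZ$. Note also that this maximum-principle argument, not the $C^2$-smoothness of $\varphi_\lambda$, is the main place where the strengthened regularity of $f(z,\cdot)$ in $H(f)_4$ enters; you flagged the issue as an ``obstacle'' but offered no mechanism to resolve it, and without it the Morse-theoretic count does not close.
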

\begin{proof}
	From Proposition \ref{prop15} we know that
	\begin{equation}\label{eq117}
		C_k(\varphi_{\lambda},u_0)=C_k(\varphi_{\lambda},v_0)=\delta_{k,0}\ZZ\ \mbox{for all}\ k\in\NN_0.
	\end{equation}
	
	Also, recall (see the proof of Proposition \ref{prop15}), that $\hat{u}\in D_+$ is a critical point of mountain pass type for $\hat{\varphi}^+_{\lambda}$ and $\hat{v}\in-D_+$ is a critical point of mountain pass type for $\hat{\varphi}^-_{\lambda}$. Note that
	\begin{eqnarray}\label{eq118}
		&&\left.\varphi_{\lambda}\right|_{C_+}=\left.\hat{\varphi}^+_{\lambda}\right|_{C_+}\ \mbox{and}\ \left.\varphi_{\lambda}\right|_{-C_+}=\left.\hat{\varphi}^-_{\lambda}\right|_{-C_+}\ (\mbox{see (\ref{eq72}), (\ref{eq73})}),\nonumber\\
		&\Rightarrow&C_k(\left.\varphi_{\lambda}\right|_{C^1(\overline{\Omega})},\hat{u})=C_k(\left.\hat{\varphi}^+_{\lambda}\right|_{C^1(\overline{\Omega})},\hat{u})\ \mbox{and}\ C_k(\left.\varphi_{\lambda}\right|_{C^1(\overline{\Omega})},\hat{v})=C_k(\left.\hat{\varphi}^-_{\lambda}\right|_{C^1(\overline{\Omega})},\hat{v})\nonumber\\
		&&\mbox{for all}\ k\in\NN_0\ (\mbox{recall that}\ \hat{u}\in D_+\ \mbox{and}\ \hat{v}\in-D_+)\\
		&\Rightarrow&C_k(\varphi_{\lambda},\hat{u})=C_k(\hat{\varphi}^+_{\lambda},\hat{u})\ \mbox{and}\ C_k(\varphi_{\lambda},\hat{v})=\nonumber  \\
		&&
		C_k(\hat{\varphi}^-_{\lambda},\hat{v})\ \mbox{for all}\ k\in\NN_0\ (\mbox{see Palais \cite{17}})\nonumber\\
		&\Rightarrow& C_k(\varphi_{\lambda},\hat{u})=C_k(\varphi_{\lambda},\hat{v})=\delta_{k,1}\ZZ\nonumber\\
		&&
		\ \ (\mbox{see \cite[Corollary 6.102, p. 177]{15}})\nonumber.
	\end{eqnarray}
	
	Also, as we already pointed out in the proof of Proposition \ref{prop19} (see (\ref{eq116}) and recall $\left.\eta_{\lambda}\right|_{[v^*_{\lambda},u^*_{\lambda}]}=\left.\varphi_{\lambda}\right|_{[v^*_{\lambda},u^*_{\lambda}]}$), we have
	\begin{equation}\label{eq119}
		C_k(\varphi_{\lambda},0)=0\ \mbox{for all}\ k\in\NN_0.
	\end{equation}
	
	From Proposition \ref{prop6}, we have
	\begin{equation}\label{eq120}
		C_k(\varphi_{\lambda},\infty)=0\ \mbox{for all}\ k\in\NN_0.
	\end{equation}
	
	Let $\hat{m}=\max\{||u_0||_{\infty},||\hat{u}||_{\infty},||v_0||_{\infty},||\hat{v}||_{\infty}\}$. Hypotheses $H(f)_4(i),(iii)$ imply that we can find $\hat{\xi}>0$ such that for almost all $z\in\Omega$, the function
	$$x\mapsto f(z,x)+\hat{\xi}x$$
	is nondecreasing on $[-\hat{m},\hat{m}]$. With $\hat{y}\in C^1(\overline{\Omega})\backslash\{0\}$ being the nodal solution we have
	\begin{eqnarray*}
		&&-\Delta\hat{y}(z)+(\xi(z)+\hat{\xi})\hat{y}(z)\\
		&=&\lambda|\hat{y}(z)|^{q-2}\hat{y}(z)+f(z,\hat{y}(z))+\hat{\xi}\hat{y}(z)\\
		&\leq&\lambda u^*_{\lambda}(z)^{q-1}+f(z,u^*_{\lambda}(z))+\hat{\xi}u^*_{\lambda}(z)\ (\mbox{since}\ \hat{y}\leq u^*_{\lambda},\ \mbox{see Proposition \ref{prop19}})\\
		&=&-\Delta u^*_{\lambda}(z)+(\xi(z)+\hat{\xi})u^*_{\lambda}(z)\ \mbox{for almost all}\ z\in\Omega,\\
		\Rightarrow&&\Delta(u^*_{\lambda}-\hat{y})(z)\leq[||\xi^+||_{\infty}+\hat{\xi}](u^*_{\lambda}-\hat{y})(z)\ \mbox{for almost all}\ z\in\Omega\ (\mbox{see hypothesis}\ H(\xi))\\
		\Rightarrow&&u^*_{\lambda}-\hat{y}\in D_+\ \mbox{(by the strong maximum principle)}.
	\end{eqnarray*}
	
	Similarly we show that
	$$\hat{y}-v^*_{\lambda}\in D_+.$$
	
	So, finally we have
	\begin{equation}\label{eq121}
		\hat{y}\in {\rm int}_{C^1(\overline{\Omega})}[v^*_{\lambda},u^*_{\lambda}].
	\end{equation}
	
	Recall that
	\begin{eqnarray}\label{eq122}
		&&\eta_{\lambda}\left|_{[v^*_{\lambda},u^*_{\lambda}]}=\varphi_{\lambda}\right|_{[v^*_{\lambda},u^*_{\lambda}]}\ \mbox{see (\ref{eq107})},\nonumber\\
		&\Rightarrow&C_k(\eta_{\lambda},\hat{y})=C_k(\varphi_{\lambda},\hat{y})\ \mbox{for all}\ k\in\NN_0\nonumber\\
		&&(\mbox{as before from (\ref{eq121}) and Palais \cite{17}})\nonumber\\
		&\Rightarrow&C_k(\varphi_{\lambda},\hat{y})=\delta_{k,1}\ZZ\ \mbox{for all}\ k\in\NN_0\\
		&&(\mbox{since}\ \hat{y}\in K_{\eta_{\lambda}}\ \mbox{is of mountain pass type, see  \cite[p. 177]{15}}).\nonumber
	\end{eqnarray}
	
	Suppose that $K_{\varphi_{\lambda}}=\{0,u_0,v_0,\hat{u},\hat{v},\hat{y}\}$. Then from (\ref{eq117}), (\ref{eq118}), (\ref{eq119}), (\ref{eq120}) and (\ref{eq121}) and using the Morse relation with $t=-1$ (see (\ref{eq6})), we have
	$$2(-1)^0+2(-1)^1+(-1)^1=0,$$
	a contradiction. So, there exists $\tilde{y}\in H^1(\Omega)$ such that
	$$\tilde{y}\in K_{\varphi_{\lambda}}\subseteq C^1(\overline{\Omega})\ \mbox{and}\ \tilde{y}\notin\{0,u_0,v_0,\hat{u},\hat{v},\hat{y}\}.$$
	
	This is the sixth nontrivial smooth solution of problem \eqref{eqP}.
\end{proof}

So, we can state the following new multiplicity theorem for problem \eqref{eqP}.
\begin{theorem}\label{th22}
	If hypotheses $H(\xi)',H(\beta),H(f)_4$ hold, then there exists a parameter value $\lambda_*>0$ such that for every $\lambda\in(0,\lambda_*)$ problem \eqref{eqP} has at least six nontrivial smooth solutions
	\begin{eqnarray*}
		&&u_0,\hat{u}\in D_+,\ v_0,\hat{v}\in-D_+,\\
		&&\hat{y}\in C^1(\overline{\Omega})\ \mbox{nodal and}\ \tilde{y}\in C^1(\overline{\Omega}).
	\end{eqnarray*}
\end{theorem}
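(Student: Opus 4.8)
The plan is to obtain the six solutions by assembling results already in place, since $H(f)_4$ is a strengthening of $H(f)_3$. First I would verify that $H(f)_4 \Rightarrow H(f)_3$. Indeed, the $C^1$-regularity of $f(z,\cdot)$ together with the bound on $f'_x$ in $H(f)_4(i)$ yields, after integration in $x$, both the pointwise bound and the almost-critical limit $\lim_{x\to\pm\infty} f(z,x)/(|x|^{2^*-2}x)=0$ required by $H(f)_3(i)$; condition $H(f)_4(ii)$ coincides verbatim with $H(f)_3(ii)$; and $H(f)_4(iii)$, upon setting $\eta(z)=\eta_0(z)=f'_x(z,0)$ (the limit defining $f'_x(z,0)$ exists by $H(f)_4(iii)$), reproduces the control near the origin demanded by $H(f)_3(iii)$, including $\eta\le\hat{\lambda}_1$ with $\eta\not\equiv\hat{\lambda}_1$. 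Hence every conclusion reached under $H(f)_3$ remains valid.

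In particular, Theorem \ref{th20} applies and already furnishes five of the six solutions: the two positive $u_0,\hat{u}\in D_+$, the two negative $v_0,\hat{v}\in-D_+$, and the nodal $\hat{y}\in C^1(\overline{\Omega})$. Here $u_0$ and $v_0$ are local minimizers of $\varphi_\lambda$, while $\hat{u},\hat{v}$ and $\hat{y}$ arise as mountain-pass critical points. The additional regularity built into $H(f)_4$ upgrades the energy functional to $\varphi_\lambda\in C^2(H^1(\Omega)\setminus\{0\})$, which is precisely what is needed to run the Morse-theoretic counting of Proposition \ref{prop21}.

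The decisive step, which I would simply invoke from Proposition \ref{prop21}, is the production of the sixth critical point through the Morse relation. One computes the critical groups at the five known points, obtaining $C_k(\varphi_\lambda,u_0)=C_k(\varphi_\lambda,v_0)=\delta_{k,0}\ZZ$ (local minimizers) and $C_k(\varphi_\lambda,\hat{u})=C_k(\varphi_\lambda,\hat{v})=C_k(\varphi_\lambda,\hat{y})=\delta_{k,1}\ZZ$ (mountain-pass type), where the last identity rests on the interior position $\hat{y}\in{\rm int}_{C^1(\overline{\Omega})}[v^*_\lambda,u^*_\lambda]$. Combined with $C_k(\varphi_\lambda,0)=0$ and $C_k(\varphi_\lambda,\infty)=0$ for all $k\in\NN_0$, inserting these into the Morse relation \eqref{eq6} at $t=-1$ would force $\sum_{u\in K_{\varphi_\lambda}}M(-1,u)=0$; but under the assumption $K_{\varphi_\lambda}=\{0,u_0,v_0,\hat{u},\hat{v},\hat{y}\}$ the left-hand side equals $2(-1)^0+2(-1)^1+(-1)^1=-1\ne 0$, a contradiction. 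Thus a sixth critical point $\tilde{y}$ exists, and the regularity theory of Wang \cite{28} places it in $C^1(\overline{\Omega})$.

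Collecting the five solutions of Theorem \ref{th20} with the extra $\tilde{y}$ from Proposition \ref{prop21} yields the six nontrivial smooth solutions asserted, with the stated sign information. The genuinely hard points, namely the vanishing of the critical groups at the origin and at infinity and the exact critical-group computations that separate the six critical points in the Morse count, have all been dispatched in the preceding propositions; the only obstacle for the theorem itself is the verification that $H(f)_4$ transfers this machinery, after which the statement follows directly.
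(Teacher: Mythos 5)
Your proposal is correct and follows essentially the same route as the paper: Theorem \ref{th22} is obtained there precisely by combining Theorem \ref{th20} (the five constant-sign/nodal solutions, which remain valid because $H(f)_4$ strengthens $H(f)_3$) with Proposition \ref{prop21}, whose Morse-relation argument produces the sixth critical point $\tilde{y}$. Your explicit check that $H(f)_4\Rightarrow H(f)_3$ (integrating the bound on $f'_x$ and taking $\eta=\eta_0=f'_x(\cdot,0)$) is left implicit in the paper, and your statement of the Morse count, namely that the relation forces the sum to vanish while it actually equals $2(-1)^0+2(-1)^1+(-1)^1=-1\neq0$, is the correct reading of the paper's slightly garbled display.
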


\section{Infinitely Many Solutions}

In this section, we generate an infinity of nontrivial smooth solutions by introducing symmetry on the reaction term. We prove two such results. The first concerns problem \eqref{eqP} and the solutions we produce are nodal. The second result deals with problem (\ref{eq1}) and produces an infinity of nontrivial smooth solutions but without any sign information.

For the first theorem, the hypotheses on the perturbation term $f(z,x)$ are the following:

\smallskip
$H(f)_5:$ $f:\Omega\times\RR\rightarrow\RR$ is a Carath\'eodory function which satisfies hypotheses $H(f)_3$ and in addition for almost all $z\in\Omega,\ f(z,\cdot)$ is odd.
\begin{theorem}\label{th23}
	If hypotheses $H(\xi)',H(\beta),H(f)_5$ hold, then we can find a parameter value $\lambda_*>0$ such that for every $\lambda\in(0,\lambda_*)$ problem \eqref{eqP} admits a sequence $\{u_n\}_{n\geq 1}\subseteq C^1(\overline{\Omega})$ of distinct nodal solutions such that
	$$u_n\rightarrow 0\ \mbox{in}\ C^1(\overline{\Omega})\ \mbox{as}\ n\rightarrow\infty.$$
\end{theorem}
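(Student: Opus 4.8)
The plan is to exploit the oddness of $f(z,\cdot)$ together with the abstract result of Heinz--Kajikiya (Theorem \ref{th3}), applied not to $\varphi_{\lambda}$ itself --- which by $H(f)_3(ii)$ is unbounded below --- but to the truncated functional $\eta_{\lambda}$ constructed in the proof of Proposition \ref{prop19}. The first point to establish is that, because $f(z,\cdot)$ and the concave term $\lambda|x|^{q-2}x$ are both odd, one has $S^-_{\lambda}=-S^+_{\lambda}$, so the extremal solutions of Theorem \ref{th18} are antipodal: $v^*_{\lambda}=-u^*_{\lambda}$ (and by symmetry $\lambda^+_*=\lambda^-_*=\lambda_*$). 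Consequently the order interval $[v^*_{\lambda},u^*_{\lambda}]=[-u^*_{\lambda},u^*_{\lambda}]$ is symmetric about the origin, and a direct inspection of the truncation $k_{\lambda}$ in \eqref{eq107} shows $k_{\lambda}(z,-x)=-k_{\lambda}(z,x)$: the three branches are interchanged by $x\mapsto-x$, using the oddness of $f$, of the concave term, and of $\mu x$. Hence $K_{\lambda}(z,\cdot)$ is even and $\eta_{\lambda}$ is an even functional with $\eta_{\lambda}(0)=0$.

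Next I would check the remaining hypotheses of Theorem \ref{th3}. As in Proposition \ref{prop19}, $\eta_{\lambda}$ is coercive (see \eqref{eq3} and \eqref{eq107}), hence bounded below and satisfying the C-condition. It remains to verify the subspace condition. Fix any $n$-dimensional subspace $Y_n\subseteq C^1(\overline{\Omega})$ (for instance spanned by eigenfunctions, recalling $E(\hat{\lambda}_k)\subseteq C^1(\overline{\Omega})$). On $Y_n$ the norms $\|\cdot\|$ and $\|\cdot\|_{\infty}$ are equivalent, and since $u^*_{\lambda}\in D_+$ is bounded away from $0$, for $u\in Y_n$ with $\|u\|$ small one has $v^*_{\lambda}(z)\leq u(z)\leq u^*_{\lambda}(z)$ a.e., so $\eta_{\lambda}(u)=\varphi_{\lambda}(u)$. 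Using $H(f)_3(iii)$ to bound $|F(z,u)|\leq c\,u^2$ near zero, together with $\gamma(u)\leq C\|u\|^2$ and the lower bound $\|u\|_q^q\geq c_n\|u\|^q$ valid on $Y_n$, I obtain
$$\eta_{\lambda}(u)\leq C'\|u\|^2-\frac{\lambda c_n}{q}\|u\|^q\quad\mbox{for}\ u\in Y_n,\ \|u\|\ \mbox{small}.$$
Since $q<2$, the right-hand side is strictly negative for $\|u\|=\rho_n$ with $\rho_n>0$ chosen small enough, which is exactly the required condition.

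I would then invoke Theorem \ref{th3} to produce a sequence $\{u_n\}_{n\geq1}$ of distinct nonzero critical points of $\eta_{\lambda}$ with $u_n\to0$ in $H^1(\Omega)$. By Claim \ref{cl1}, $K_{\eta_{\lambda}}\subseteq[v^*_{\lambda},u^*_{\lambda}]\cap C^1(\overline{\Omega})$ and $\eta_{\lambda}=\varphi_{\lambda}$ on this order interval, so each $u_n$ solves \eqref{eqP}. Because the $u_n$ lie in a fixed order interval they are uniformly bounded in $L^{\infty}(\Omega)$, and the regularity theory of Wang \cite{28} gives a uniform $C^{1,\alpha}(\overline{\Omega})$ bound; combined with $u_n\to0$ in $H^1(\Omega)$ this upgrades to $u_n\to0$ in $C^1(\overline{\Omega})$.

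Finally I would check nodality. If some $u_n$ were nonnegative and nontrivial, the strong maximum principle (under $H(\xi)'$) would force $u_n\in D_+$, i.e.\ $u_n\in S^+_{\lambda}$; by the extremality in Theorem \ref{th18} and Proposition \ref{prop17} this gives $u^*_{\lambda}\leq u_n$, which is impossible for large $n$ since $u^*_{\lambda}\in D_+$ is bounded away from $0$ while $u_n\to0$ in $C^1(\overline{\Omega})$. The symmetric argument excludes nonpositive $u_n$, so (after discarding finitely many) every $u_n$ is sign-changing, yielding the desired sequence of distinct nodal solutions converging to $0$ in $C^1(\overline{\Omega})$. The main obstacle is the symmetrization step: one must confirm that the truncation built from the extremal pair is genuinely odd, which rests on the identity $v^*_{\lambda}=-u^*_{\lambda}$. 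Once evenness of $\eta_{\lambda}$ is secured, the remaining verifications --- coercivity, the concave-driven subspace condition, and the nodality exclusion --- are routine applications of tools already developed above.
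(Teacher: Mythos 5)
Your proposal is correct and follows essentially the same route as the paper's proof: apply Theorem \ref{th3} to the even, coercive truncated functional $\eta_{\lambda}$ from Proposition \ref{prop19}, verify the finite-dimensional subspace condition using the concave term and $q<2$, use Claim \ref{cl1} to place the resulting critical points in $[v^*_{\lambda},u^*_{\lambda}]\cap C^1(\overline{\Omega})$, upgrade $H^1$-convergence to $C^1(\overline{\Omega})$-convergence via Wang's regularity theory and the compact embedding of $C^{1,\alpha}(\overline{\Omega})$ into $C^1(\overline{\Omega})$, and conclude nodality from the extremality of $u^*_{\lambda}$ and $v^*_{\lambda}$. If anything, your write-up is more complete than the paper's, since you make explicit two points it leaves implicit: the antipodal identity $v^*_{\lambda}=-u^*_{\lambda}$ (which is exactly what makes the truncation $k_{\lambda}$ odd and hence $\eta_{\lambda}$ even) and the maximum-principle/extremality argument that rules out constant-sign solutions in the tail of the sequence.
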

\begin{proof}
	Let $\lambda_*=\min\{\lambda^+_*,\lambda^-_*\}$ be as in Proposition \ref{prop15}(c) and consider the $C^1$-functional $\eta_{\lambda}:H^1(\Omega)\rightarrow\RR$ introduced in the proof of Proposition \ref{prop19}. We have the following properties:
	\begin{itemize}
		\item	$\eta_{\lambda}$ is even;
		\item	$\eta_{\lambda}$ is coercive (hence it is bounded below and it satisfies the C-condition);
		\item $\eta_{\lambda}(0)=0$.
	\end{itemize}
	
	Let $V$ be a finite dimensional subspace of $H^1(\Omega)$. So, all norms on $V$ are equivalent. Also, from (\ref{eq107}) and hypotheses $H(f)_5(i),(iii)=H(f)_3(i),(iii)$ we see that we can find $c_{24}>0$ such that
	\begin{equation}\label{eq123}
		|K_{\lambda}(z,x)|\leq c_{24}|x|^2\ \mbox{for almost all}\ z\in\Omega.
	\end{equation}
	
	So, for every $u\in V$ and recalling that all norms are equivalent, we have
	\begin{eqnarray*}
		&&\eta_{\lambda}(u)\leq c_{25}||u||^2-\lambda c_{26}||u||^q\ \mbox{for some}\ c_{25},c_{26}>0\\
		&&(\mbox{see (\ref{eq123}) and hypotheses}\ H(\xi)',H(\beta)).
	\end{eqnarray*}
	
	Since $q<2$, we can find $\rho_{\lambda}\in(0,1)$ small such that
	$$\eta_{\lambda}(u)<0\ \mbox{for all}\ u\in V\ \mbox{with}\ ||u||=\rho_{\lambda}.$$
	
	Therefore we can apply Theorem \ref{th3} and find $\{u_n\}_{n\geq 1}\subseteq K_{\eta_\lambda}\subseteq[v^*_{\lambda},u^*_{\lambda}]\cap C^1(\overline{\Omega})$ (see Claim \ref{cl1} in the proof of Proposition \ref{prop19}) such that
	\begin{equation}\label{eq124}
		u_n\rightarrow 0\ \mbox{in}\ H^1(\Omega).
	\end{equation}
	
	From the regularity theory of Wang \cite{28} we know that
	\begin{eqnarray}\label{eq125}
		&&u_n\in C^{1,\alpha}(\overline{\Omega})\ \mbox{with}\ \alpha=1-\frac{N}{s}>0\ \mbox{and}\ ||u_n||_{C^{1,\alpha}(\overline{\Omega})}\leq c_{27}\\
		&&\mbox{for all}\ n\in\NN\ \mbox{and some}\ c_{27}>0.\nonumber
	\end{eqnarray}
	
	Exploiting the compact embedding of $C^{1,\alpha}(\overline{\Omega})$ into $C^1(\overline{\Omega})$, from (\ref{eq124}) and (\ref{eq125}) we infer that
	$$u_n\rightarrow 0\ \mbox{in}\ C^1(\overline{\Omega})\ \mbox{as}\ n\rightarrow\infty .$$
	
	Moreover, since $u_n\in[v^*_{\lambda},u^*_{\lambda}]$ for all $n\in\NN$, $u_n\in C^1(\overline{\Omega})$ is nodal.
\end{proof}

The second result of this section is about problem (\ref{eq1}). For this result the hypotheses on the reaction term $f(z,x)$ are the following:

\smallskip
$H(f)_6:$ $f:\Omega\times\RR\rightarrow\RR$ is a Carath\'eodory function such that for almost all $z\in\Omega$ $f(z,x)$ is odd and hypotheses $H(f)_6(i),(ii)$ are the same as the corresponding hypotheses $H(f)_5(i),(ii)$.
\begin{remark}
	We point out that in the above hypotheses there are no conditions on $f(z,\cdot)$ near zero.
\end{remark}
\begin{theorem}\label{th24}
	If hypotheses $H(\xi),H(\beta),H(f)_6$ hold, then problem (\ref{eq1}) admits an unbounded sequence of solutions $\{u_n\}_{n\geq 1}\subseteq C^1(\overline{\Omega})$.
\end{theorem}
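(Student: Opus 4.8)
The plan is to apply the symmetric mountain pass theorem (Theorem \ref{th2}) to the energy functional $\varphi$. Since $f(z,\cdot)$ is odd by hypothesis $H(f)_6$, its primitive $F(z,\cdot)$ is even, so $\varphi(-u)=\varphi(u)$ and $\varphi(0)=0$. Moreover, because $H(f)_6(i),(ii)$ coincide with $H(f)_1(i),(ii)$, Proposition \ref{prop5} applies verbatim and $\varphi$ satisfies the $C$-condition. Hence the entire burden is to verify the two geometric conditions of Theorem \ref{th2} for a suitable orthogonal splitting $H^1(\Omega)=Y\oplus V$. For this I would take $Y=\mathop{\oplus}\limits_{k=1}^{m-1}E(\hat{\lambda}_k)$ and $V=\overline{\mathop{\oplus}\limits_{k\geq m}E(\hat{\lambda}_k)}$, where $m$ is a large index to be fixed, chosen so that $\hat{\lambda}_m>0$.

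To check condition (ii) of Theorem \ref{th2} I would argue exactly as at the start of Proposition \ref{prop6}: the superlinearity built into $H(f)_6(ii)$ gives, for each $\eta>0$, a constant $c>0$ with $F(z,x)\geq\frac{\eta}{2}x^2-c$, so that on any finite-dimensional subspace $E$ (where all norms are equivalent) one has $\varphi(u)\leq\frac12(a-\eta b)\|u\|^2+c|\Omega|_N$ for suitable $a,b>0$. Choosing $\eta$ large makes the quadratic coefficient negative, so $\varphi(u)\to-\infty$ as $\|u\|\to\infty$ in $E$, which is precisely condition (ii).

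The delicate point, and the main obstacle, is condition (i): one must exhibit a sphere $\partial B_\rho\cap V$ on which $\varphi\geq\vartheta>0$. The difficulty is that $H(f)_6$ imposes \emph{no} control on $f(z,\cdot)$ near the origin, so from $H(f)_6(i)$ one only obtains, for fixed $\varepsilon>0$, an estimate $|F(z,x)|\leq\frac{\varepsilon}{2^*}|x|^{2^*}+c_\varepsilon|x|$ in which the linear term $c_\varepsilon|x|$ destroys positivity on small spheres. The way around this is to exploit the high index $m$. On $V$ we have $\gamma(u)\geq\hat{\lambda}_m\|u\|_2^2$ by \eqref{eq5}; combined with \eqref{eq3} this yields $\gamma(u)\geq c_*\|u\|^2$ with $c_*>0$ uniform in $m$, while, crucially, $\|u\|_2^2\leq\gamma(u)/\hat{\lambda}_m\leq C\|u\|^2/\hat{\lambda}_m$, so that $\|u\|_1\leq|\Omega|_N^{1/2}\sqrt{C/\hat{\lambda}_m}\,\|u\|$. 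Using also the Sobolev bound $\|u\|_{2^*}\leq C_S\|u\|$, one gets on $V$
$$\varphi(u)\geq\frac{c_*}{2}\|u\|^2-\frac{\varepsilon}{2^*}C_S^{2^*}\|u\|^{2^*}-c_\varepsilon|\Omega|_N^{1/2}\sqrt{C/\hat{\lambda}_m}\,\|u\|.$$
Fixing $\varepsilon$, I would first choose $\rho$ small so that the Sobolev term is dominated by half of $\frac{c_*}{2}\rho^2$, giving $\varphi(u)\geq 2\vartheta_0:=\frac{c_*}{4}\rho^2$ modulo the linear term; then I would choose $m$ so large that $c_\varepsilon|\Omega|_N^{1/2}\sqrt{C/\hat{\lambda}_m}\,\rho<\vartheta_0$. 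This yields $\varphi|_{\partial B_\rho\cap V}\geq\vartheta_0>0$, i.e. condition (i), with $\rho$ an \emph{intermediate} radius rather than an arbitrarily small one.

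With (i), (ii) and the $C$-condition established, Theorem \ref{th2} produces an unbounded sequence $\{u_n\}_{n\geq1}\subseteq H^1(\Omega)$ of critical points of $\varphi$, hence of weak solutions of \eqref{eq1}. Finally I would upgrade their regularity exactly as in the proof of Theorem \ref{th8}: rewriting each equation as $-\Delta u_n=(a_n-\xi)u_n+b_n$ with $a_n\in L^{N/2}(\Omega)$ and $b_n\in L^\infty(\Omega)$, Lemma 5.1 of Wang \cite{28} gives $u_n\in L^\infty(\Omega)$, and then the Calder\'on--Zygmund estimates (Lemma 5.2 of \cite{28}) together with the Sobolev embedding give $u_n\in C^{1,\alpha}(\overline{\Omega})\subseteq C^1(\overline{\Omega})$, which completes the argument.
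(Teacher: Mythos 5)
Your proposal is correct and is essentially the paper's own proof: both apply the symmetric mountain pass theorem (Theorem \ref{th2}) to $\varphi$ with $V$ a high-index tail space $\overline{\mathop{\oplus}\limits_{k\geq l}E(\hat{\lambda}_k)}$, verify condition (ii) by anticoercivity on finite-dimensional subspaces via the superlinearity of $F$, verify condition (i) by splitting the almost-critical estimate $F(z,x)\leq\frac{\varepsilon}{2}|x|^{2^*}+c_\varepsilon|x|$ and killing the troublesome linear term with the bound $\|u\|_1\leq c\|u\|_2\leq\bigl(c/\sqrt{\hat{\lambda}_l}\bigr)\|u\|$ on $V_l$ (first fixing the radius $\rho$, then taking $l$ large, exactly as in the paper), and finish with Wang's regularity theory. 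No gaps.
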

\begin{proof}
	From Proposition \ref{prop5} we know that the energy (Euler) functional $\varphi$ satisfies the C-condition and $\varphi(0)=0$.
	
	We consider the following orthogonal direct sum decomposition of $H^1(\Omega)$
	$$H^1(\Omega)=H_-\oplus E(0)\oplus H_+$$
	with $H_-=\overset{m_-}{\underset{\mathrm{k=1}}\oplus}E(\hat{\lambda}_k),\ H_+=\overline{{\underset{\mathrm{k\geq m_+}}\oplus}E(\hat{\lambda}_k)}$ (see Section 2). Then every $u\in H^1(\Omega)$ admits a unique sum decomposition of the term
	$$u=\bar{u}+u^0+\hat{u}\ \mbox{with}\ \bar{u}\in H_-,\ u^{\circ}\in E(0),\ \hat{u}\in H_+\,.$$
	
	Hypothesis $H(f)_6(i)$ implies that given $\epsilon>0$, we can find $c_{28}=c_{28}(\epsilon)>0$ such that
	\begin{equation}\label{eq126}
		F(z,x)\leq\frac{\epsilon}{2}|x|^{2^*}+c_{28}|x|\ \mbox{for almost all}\ z\in\Omega,\ \mbox{all}\ x\in\RR.
	\end{equation}
	
	Let $u\in H_+$. We have
	\begin{eqnarray}\label{eq127}
		\varphi(u)&=&\frac{1}{2}\gamma(u)-\int_{\Omega}F(z,u)dz\nonumber\\
		&\geq&\frac{1}{2}\gamma(u)-\frac{\epsilon}{2}||u||^{2^*}_{2^*}-c_{28}||u||_1\ (\mbox{see (\ref{eq128})})\nonumber\\
		&\geq&\frac{1}{2}\left[\gamma(u)-\epsilon c_{29}||u||^{2^*}\right]-c_{28}||u||_1\ \mbox{for some}\ c_{29}>0\nonumber\\
		&\geq&\left[2c_{30}||u||^2-\epsilon c_{29}||u||^{2^*}\right]-c_{28}||u||_1\ \mbox{for some}\ c_{29}>0\ (\mbox{recall that}\ u\in H_+)\nonumber\\
		&=&\left[c_{30}||u||^2-\epsilon c_{29}||u||^{2^*}\right]+\left[c_{30}||u||^2-c_{28}||u||_1\right].
	\end{eqnarray}
	
	If $\hat{u}\in H_+$ is such that
	$$||\hat{u}||=\hat{\rho}<\left(\frac{c_{30}}{\epsilon c_{29}}\right)^{\frac{1}{2^*-2}},$$
	then we have
	$$c_{30}||u||^2-\epsilon c_{29}||u||^{2^*}>0.$$
	
	Also, for $l\geq m_+$ and $u\in\overline{{\underset{\mathrm{k\geq l}}\oplus}E(\hat{\lambda}_k)}$, we have
	$$c_{28}||u||_1\leq c_{31}||u||_2\leq\frac{c_{31}}{\sqrt{\hat{\lambda}_l}}||u||\ \mbox{for some}\ c_{31}>0.$$
	
	Therefore
	$$c_{30}||u||^2-c_{28}||u||_1\geq c_{30}||u||^2-\frac{c_{31}}{\sqrt{\hat{\lambda}_l}}||u||\ \mbox{for}\ u\in V_l=\overline{{\underset{\mathrm{k\geq l}}\oplus}E(\hat{\lambda}_k)}.$$
	
	So, if $u\in V_l$ with $l\geq m_+$ big and with $||u||=\hat{\rho}$, we have
	$$c_{30}\hat{\rho}^2-\frac{c_{31}}{\sqrt{\hat{\lambda}_l}}\hat{\rho}>0.$$
	
	Returning to (\ref{eq127}), we see that
	\begin{equation}\label{eq128}
		\left.\varphi\right|_{V_l\cap\partial B_{\hat{\rho}}}>0.
	\end{equation}
	
	Next let $Z\subseteq H^1(\Omega)$ be a finite dimensional subspace. From hypotheses $H(f)_6(i),(ii)$, we know that given any $\eta>0$, we can find $c_{32}=c_{32}(\eta)>0$ such that
	\begin{equation}\label{eq129}
		F(z,x)\geq\frac{\eta}{2}x^2-c_{32}\ \mbox{for almost all}\ z\in\Omega,\ \mbox{all}\ x\in\RR.
	\end{equation}
	
	For $u\in Z$ we have
	\begin{eqnarray}\label{eq130}
		\varphi(u)&=&\frac{1}{2}\gamma(u)-\int_{\Omega}F(z,u)dz\nonumber\\
		&\leq&\frac{1}{2}\gamma(u)-\frac{\eta}{2}||u||^2_2+c_{32}|\Omega|_N\ \mbox{see (\ref{eq129})}\nonumber\\
		&\leq&c_{33}||u||^2-\eta c_{34}||u||^2+c_{32}|\Omega|_N\ \mbox{for some}\ c_{33},c_{34}>0
	\end{eqnarray}
	(since $Z$ is finite dimensional all norms are equivalent).
	
	But $\eta>0$ is arbitrary. So, we choose $\eta>\frac{c_{33}}{c_{34}}>0$ and from (\ref{eq130}) we infer that
	\begin{equation}\label{eq131}
		\varphi|_{Z}\ \mbox{is anticoercive}.
	\end{equation}
	
	Then (\ref{eq128}) and (\ref{eq131}) permit the use of Theorem \ref{th2} (the symmetric mountain pass theorem). So, we can find $\{u_n\}_{n\geq 1}\subseteq H^1(\Omega)$ such that
	$$u_n\in K_{\varphi}\ \mbox{for all}\ n\in\NN\ \mbox{and}\ ||u_n||\rightarrow+\infty\,.$$
	
	Hence $u_n$ is a solution of (\ref{eq1}) and $u_n\in C^1(\overline{\Omega})$ with $||u_n||_{C^1(\overline{\Omega})}\rightarrow+\infty$.
\end{proof}

\medskip{\bf Acknowledgments.} 	This research was supported by the Slovenian Research Agency grants P1-0292, J1-8131, J1-7025.

\end{document}